\newtheorem{theorem}{Theorem}
\newtheorem{proposition}{Proposition}
\newtheorem{corollary}{Corollary}
\newtheorem{lemma}{Lemma}
\newtheorem{question}{Question}
\newtheorem{assumption}{Assumption}
\newtheorem{definition}{Definition}
\newtheorem{claim}{Claim}
\newtheorem{remark}{Remark}
\newcommand\ignore[1]{}
\def\R{\mathbb{R}} 
\def\N{\mathbb{N}} 
\newcommand{\Gnote}[1]{}
\newcommand{\Ex}[1]{\mathbb{E}\left[#1\right]} 
\newcommand{\Ind}[1]{\mathbb{I}_{#1}} 
\renewcommand{\Pr}[1]{\mathbb{P}\left(#1\right)} 
\def\sM{\mathcal{M}}
\def\sX{\mathcal{X}}
\newcommand\QED{\ifhmode\allowbreak\else\nobreak\fi
\quad\nobreak$\Box$\medbreak}
\newcommand{\proofstart}{\par\noindent\sl Proof:\rm\enspace}
\newcommand{\proofend}{\QED\par}
\newenvironment{proof}{\proofstart}{\proofend}
\def\eps{\epsilon}
\def\ER{Erd\H{o}s-R\'{e}nyi}
\newcommand{\modulo}[1]{\left| #1\right|}
\newcommand{\pnorm}[2]{\left\|#2\right\|_{#1}}
\newcommand{\supnorm}[1]{\pnorm{\infty}{#1}}
\newcommand{\expp}[1]{\exp \left( #1 \right)} 
\newcommand{\prodint}[2]{ \left\langle #1,#2 \right\rangle }
\newcommand{\one}{{\bf 1}}
\begin{document}

\title{Interacting diffusions on random graphs with diverging average degrees: hydrodynamics and large deviations}


\author{Roberto I. Oliveira\thanks{IMPA, Rio de Janeiro, Brazil. 22460-320. \texttt{rob.oliv@gmail.com},~\texttt{rimfo@impa.br}.  supported by a Bolsa de Produtividade em Pesquisa from CNPq, Brazil. His work in this article is part of the activities of FAPESP Center for Neuromathematics (grant \# 2013/07699-0, FAPESP - S. Paulo Research Foundation). }        \and
        Guilherme H. Reis\thanks{IMPA, Rio de Janeiro, Brazil. 22460-320. \texttt{ghreis@impa.br}. Supported by a Ph.D. scholarship from CNPq, Brazil (grant \# 140768/2015-7.)}  
}

\maketitle

\begin{abstract} 
We consider systems of mean-field  interacting diffusions, where the pairwise interaction structure is described by a sparse (and potentially inhomogeneous) random graph. Examples include the stochastic Kuramoto model with pairwise interactions given by an Erd\H{o}s-R\'{e}nyi graph. Our problem is to compare the bulk behavior of such systems with that of corresponding systems with dense nonrandom interactions. For a broad class of interaction functions, we find the optimal sparsity condition that implies that the two systems have the same hydrodynamic limit, which is given by a McKean-Vlasov diffusion. Moreover, we also prove matching behavior of the two systems at the level of large deviations. Our results extend classical results of dai Pra and den Hollander and provide the first examples of LDPs for systems with sparse random interactions. 
\end{abstract}

\section{Introduction}
\label{intro}

Mean-field models of interacting diffusion processes have attracted much interest. Physically, they are models for systems with many interacting components that can range from the brain to electrical circuits \cite{surveyKuramoto,kuramoto2012chemical}. Mathematically, they give rise to interesting phenomena, such as equations of McKean-Vlasov type \cite{Sznitman_Chaos,Pra1996}. 

Classical models typically have pairwise interactions between all or most pairs of diffusions. In this paper we consider certain systems with sparse disordered interactions. For a simple concrete example, take a large $n\in\N$ and consider a random symmetric matrix
\[A^{(n)} = (A_{i,j}^{(n)})_{i,j=1}^n\in \{0,1\}^{n\times n}\]
whose entries for $1\leq i\leq j\leq n$ are i.i.d. Bernoulli random variables with mean $p(n)$. This matrix can be thought of as the adjacency matrix of an \ER~random graph $G(n,p(n))$ \cite{bollobas01}, except that we allow ``loops" (self-edges). The reader should think that $p(n)\to 0$ as $n\to +\infty$.

Now consider two systems of interacting diffusions,
\[\overline{\theta}^{(n)}:=(\overline{\theta}_i^{(n)})_{i\in[n]}\mbox{ and }{\theta}^{(n)}:=({\theta}_i^{(n)})_{i\in[n]},\]
defined for times $0\leq t\leq T$ via the  stochastic differential equations:
\begin{eqnarray}\label{eq:kuramotointro}d\overline{\theta}^{(n)}_i(t) &=& \left(\sum_{j=1}^n\,\frac{\kappa}{n}\sin(\overline{\theta}_j^{(n)}(t)-\overline{\theta}_i^{(n)}(t)) + \omega_i^{(n)}\right)dt + dB^{(n)}_i(t);\\
\label{eq:kuramotointrosparse}d{\theta}^{(n)}_i(t) &=& \left(\sum_{j=1}^n\,\frac{\kappa A^{(n)}_{i,j}}{np(n)}\sin({\theta}_j^{(n)}(t)-{\theta}_i^{(n)}(t)) + \omega_i^{(n)}\right)dt + dB^{(n)}_i(t).\end{eqnarray}
Here, the $\omega_i^{(n)}$ are random ``natural frequencies" and the $B^{(n)}_i$ are independent standard Brownian motions. 

The model in (\ref{eq:kuramotointro}) is the stochastic version of the standard Kuramoto model, a family of widely studied models of synchronization \cite{Pra1996,surveyKuramoto,generalisingKuramoto,kuramoto2012chemical}. This model has a {\em dense interaction structure}, in that the drift term for each diffusion contains terms involving all other diffusions. By contrast, if $p(n)\ll 1$, then pairwise interactions in (\ref{eq:kuramotointrosparse}) are {\em sparse and random}. This may provide a more realistic model for many systems of interest, where connections are disordered and not abundant.

One may ask what relationship (if any) there exists between the properties of (\ref{eq:kuramotointro}) and (\ref{eq:kuramotointrosparse}). Some recent papers have shown that, if the random graph is not too sparse, then the two systems have similar bulk behavior in the thermodynamic limit. For instance, Delattre, Giacommin and Lu\c{c}on \cite{Delattre2016} prove such a result when $np(n)\gg \log n$, and Medvedev \cite{medvedev-sparse} does so for $np(n)\gg \sqrt{n}$. This raises two natural questions.

\begin{question}\label{question:optimal}What is the optimal sparsity condition on the random graph that leads systems (\ref{eq:kuramotointro}) and (\ref{eq:kuramotointrosparse}) to have the same hydrodynamic limits?\end{question}

\begin{question}\label{question:other}Can this similarity be extended to other aspects of bulk behavior, like fluctuations and large deviations?\end{question}

Clearly, the same questions can be asked about many other systems beyond the Kuramoto case.

\subsection{Our contribution}

The results in this paper gives a fairly complete answer to Question \ref{question:optimal} and obtains large deviations results in the direction of Question \ref{question:other}, for a broad class of systems. 

For concreteness, we first state our result in the Kuramoto case. Assume the two systems of diffusions have the same initial conditions and are built from the same Brownian motions. Define the double-layer empirical measures \cite{Pra1996}: \[\overline{L}_n:=\frac{1}{n}\sum_{i=1}^n\delta_{(\overline{\theta}_i^{(n)},\omega_i^{(n)})} \mbox{ and }{L}_n:= \frac{1}{n}\sum_{i=1}^n\delta_{(\theta_i^{(n)},\omega_i^{(n)})},\]
where $\theta_i^{(n)},\overline{\theta}_i^{(n)}\in C([0,T],\R)$ are the trajectories of individual particles. Our main finding -- contained in Theorem \ref{thm:main} in Section \ref{sec:main} below -- is that:
\begin{center}When $np(n)\to +\infty$, $L_n$ and $\overline{L}_n$ obey the same large deviations principle.\end{center}

This implies in particular that the system with sparse random interactions has the same McKean-Vlasov limit as the dense system. Moreover, we obtain what are (to the best of our knowledge) the first LDP in the sparse random setting. 

The condition $np(n)\to +\infty$ implies diverging average degree in the random graph. As it turns out this condition is optimal:  the same system with $np(n)\to c\in\R$ would have a different limit (we study this regime in a companion paper in preparation). In this sense, our Theorem fully answers Question \ref{question:optimal}. 

In fact, Theorem \ref{thm:main} gives optimal results beyond (\ref{eq:kuramotointro}) and (\ref{eq:kuramotointrosparse}). In terms of interaction functions, Theorem \ref{thm:main} covers a slightly more general setting than the  ``Hamiltonian interactions"  considered in the classical paper of dai Pra and den Hollander \cite{Pra1996}. 

As for random graph models, we will consider sparse versions of the $W$-random graphs from the theory of graph limits \cite{Lovasz_Limits} (see also \cite{medvedev-meanfield} for the dense case). To define one such graph, associate a vector $\omega^{(n)}$ of ``media variables"~to the $n$ particles. We then let the probability of an edge between particles $i$ and $j$ has the form \[\Pr{A^{(n)}_{i,j}=1\mid \omega^{(n)}} = p(n)W(\omega_i^{(n)},\omega_j^{(n)})\] for a function $W$ and a sequence $p(n)\in (0,1]$. The main attraction of this model is that it is inhomogeneous: different potential edges can be more or less likely to appear. Nevertheless, the condition that $np(n)\to +\infty$ is still necessary and sufficient for comparison with the dense setting.

\subsection{Discussion and further background}\label{sub:background}
In what follows we give a very selective survey of results on interacting diffusions and relate them to our own work. 

Models with dense mean-field interactions are classical. Sznitman's lecture notes \cite{Sznitman_Chaos} give an early overview of rigorous work in the area. For our purposes, the paper \cite{Pra1996} by dai Pra and den Hollander is especially important, as it proves fairly general results on Large Deviations (which we employ in this paper) and Central Limit Theorems (which we do not pursue). Lu\c{c}on \cite{lucon2017} obtains quenched large deviations in a similar setting. Budhiraja, Dupuis and Fischer \cite{budhiraja2012} consider a larger family of interactions that includes jumps, non-constant diffusion coefficients and nonlinear terms in the empirical measure. However, \cite{budhiraja2012} does not consider ``media variables" or ``impurities". 

Recent papers have considered sparse, disordered and/or geometrically constrained interactions. Neuroscience provides an important impetus for these studies. For instance, Lu\c{c}on and Stannat \cite{lucon2014,lucon2016} derive hydrodynamic limits and fluctuations for geometrically constrained models with singular interactions. Cabana and Touboul \cite{Cabana2013,Cabana20181,Cabana} consider models with interaction delays and Gaussian couplings that have highly nontrivial behavior in the thermodynamic limit.

The closest results to our own work are \cite{medvedev-meanfield,medvedev-sparse,chibamedvedevmizuhara2018,Delattre2016,Coppini2018}. These papers deal with hydrodynamic limits of models with random interactions. Except for the very recent \cite{Coppini2018}, they do not obtain results on large deviations. 

Chiba and Medvedev \cite{medvedev-meanfield} consider the Kuramoto model with no noise over dense $W$-random graphs. They describe the bulk behavior of such system, and study the transition points for limiting Vlasov PDE. Chiba et al. \cite{chibamedvedevmizuhara2018} contains numerical results on bifurcations of the limiting model. Medvedev \cite{medvedev-sparse} obtains results for sparser inhomogeneous and possibly directed graphs. In the Erd\"{o}s-R\'{e}nyi setting, he assumes $p(n)=n^{-\gamma}$ for $\gamma>0.5$, which is considerably stronger than $np(n)\to +\infty$. Our techniques can be adapted to the directed case (cf. Remark \ref{rem:LDPdifferences} below). As an aside, note that these papers consider noiseless systems where the Brownian motions are absent. Our methods could cover this. However, in this case the dense system is deterministic and it would not make sense to compare its LDP to the sparse case.

Delattre, Giacomin and Lu\c{c}on \cite{Delattre2016} construct a coupling between {\em individual particles} in the finite-$n$ model and a set of independent McKean-Vlasov diffusions. This leads to a hydrodynamic limit for certain sparse systems, but not to large deviations. Unlike our paper, they do not impose a distributional assumption on the interaction graph: they only require that it is nearly regular with large degree. However, in the particular case of Erd\H{o}s-R\'{e}nyi graphs, this leads to the condition $np(n)(\log n)^{-1}\to +\infty$, which is stronger than what we require.

The very recent preprint by Coppini, Dietert and Giacomin \cite{Coppini2018} appeared only a few days after the first version of the present paper. The authors obtain a hydrodynamic limit and a LDP over the Erd\H{o}s-R\'{e}nyi random graph under the condition that $\liminf np(n)(\log n)^{-1}>0$. Unlike our main result, their theorem is a {\em quenched} statement with respect to the initial conditions. In addition, they consider a more general class of interactions. It should be possible to apply a modification of our Lemma \ref{lem:expequivuse} to reprove their result under the optimal condition $np(n)\to +\infty$.

We finish this section by highlighting some aspects of our proofs. The main technical step will be to show that the measures $L_n$ and $\overline{L}_n$ are {\em exponentially equivalent} in the sense that, for a suitable metric $d$ over probability measures, $\Pr{d(L_n,\overline{L}_n)>\eta}$ goes to $0$ faster than any exponential function for any fixed $\eta>0$. The role of this concept, explained in Section \ref{sec:proofmain} below, is that exponential equivalence allows us to transfer the LDP from one system to another. We can then apply the LDP by \cite{Pra1996} with slight extensions discussed in Remark \ref{rem:LDPdifferences} and Appendix \ref{sec:extendingg} below.

To prove exponential equivalence, a crucial step is to bound the difference between the adjacency matrix of the random graph and its entrywise expectation. Bounding the spectral norm of this difference would be natural, but this norm does not behave well when $np(n)\ll \log n$ due to large degree vertices (see e.g. \cite[Remark 4.2]{Guedon2016}). It turns out that the following weaker norm is sufficient for our argument to go through:
\begin{eqnarray*}\|A^{(n)} - \Ex{A^{(n)}}\|_{\infty\to 1} &:=& \sup\{\|(A^{(n)} - \Ex{A^{(n)}})\vec{x}\|_1\,:\,\vec{x}\in \R^n,\,\|\vec{x}\|_{\infty}\leq 1\} \\ 
&=& \sup\{\prodint{\vec{y}}{(A^{(n)} - \Ex{A^{(n)}})\vec{x}}\,:\,\vec{x},\vec{y}\in [-1,1]^n\}.\end{eqnarray*}
Unlike the spectral norm, this norm is ``small" whenever $np(n)\to +\infty$. This was observed by Gu\'{e}don and Vershynin in the context of community detection in random graphs \cite[Remark 4.2]{Guedon2016}. Noticing that this is the right norm for our problem is one of our main contributions.

\subsection{Organization}

The remainder of the paper is organized as follows. Section \ref{sec:prelim} fixes notation and recalls known results. Section \ref{sec:models} details our assumptions and establishes the framework for the remainder of the paper. To illustrate the assumptions, we also give an example that is a somewhat more sophisticated than the Kuramoto model in the Introduction. Section \ref{sec:main} contains a description of McKean-Vlasov diffusions and the statement and proof of our main Theorem. This proof relies on the exponential equivalence result that is stated and proved in Section \ref{sec:expequiv}. The main lemmas in that proof are also proven in that section. Some auxiliary Lemmas are left to Section \ref{sec:auxiliary}. The appendices contains a technical approximation lemma and an argument for extending the LDP of dai Pra and den Hollander \cite{Pra1996} to a slightly larger class of interaction functions.

\section{Preliminaries}\label{sec:prelim}

In this paper $\N$ is the set of positive integers. For $n\in\N$, $[n]:=\{1,2,\dots,n\}$. 

Let $(S,\mathcal{S})$ be a measurable space and $P$ be a probability measure over $(S,\mathcal{S})$. We write $X\sim P$ to mean that $X$ is a random element of $(S,\mathcal{S})$ with law $P$. The product of probability measures $P$ and $Q$ is denoted by $P\otimes Q$. We also write: \[P^{\otimes n} = \underbrace{P\otimes P\otimes \dots \otimes P}_{n\text{ times}}.\]

Given a metric space $(S,d)$ and a function $f:S\to \R$, we define:
\begin{eqnarray}\|f\|_{\infty}&:=& \sup\{|f(x)|\,:\,x\in S\};\\
\|f\|_{Lip} &:=& \sup\left\{\frac{|f(x)-f(y)|}{d(x,y)}\,:\,x,y\in S,\,x\neq y\right\};\\
\|f\|_{BL} &:=&2( \|f\|_{\infty} + \|f\|_{Lip}).\end{eqnarray}
We say that $f$ is Lipschitz if $\|f\|_{Lip}<+\infty$ and bounded Lipschitz if $\|f\|_{BL}<+\infty$.
\begin{remark}\label{rem:bldist} In this setting,
\[\pnorm{BL}{h}\leq 1\implies |h(x)-h(y)|\leq |x-y|\wedge 1.\]
\end{remark}

Now let $(S,d)$ be a Polish metric space, with $\mathcal{S}$ the Borel $\sigma$-field. We consider the space $\mathcal{M}_1(S)$ of probability measures over $(S,\mathcal{S})$. The topology of weak convergence in that space is metrized by the BL metric, defined for $P,Q\in \mathcal{M}_1(S)$ as follows:
\[d_{BL}(P,Q):= \sup\left\{\left|\int_S\,f\,d(P-Q)\right|\,:\, f:S\to \R\mbox{ with }\|f\|_{\rm BL}\leq 1\right\}.\]
$(\mathcal{M}_1(S),d_{BL})$ is a Polish metric space. We also consider the Wasserstein metric:
\[d_{W}(P,Q):=\sup\left\{\left|\int_S\,f\,d(P-Q)\right|\,:\, f:S\to \R\mbox{ $1$-Lipschitz}\right\},\]
which is only defined for $P$ and $Q$ with finite first moments. Clearly, $d_{BL}\leq d_W$ always.

We recall the definition of a Large Deviations Principle (cf. \cite[Section~1.2]{dembo2009large} ). 

\begin{definition}[Large Deviations Principle]\label{def:LDP}A good rate function $I$ is a lower semicontinuous mapping $I:S \to [0,\infty]$ such that the level sets $I^{-1}((-\infty,a])$ are compact. A sequence $\{X_n\}_{n\in\N}$ of random elements of $S$  satisfies a Large Deviation Principle (LDP) with  rate function $I$ and speed $n$ if, for all Borel-measurable $E\subset S$
\[-\inf_{x \in {\rm int} E}I(x) \leq \liminf_{n \to \infty}\dfrac{1}{n} \log \Pr{X_n\in E}\leq \limsup_{n \to \infty}\dfrac{1}{n}\log  \mu_n(E) \leq -\inf_{x \in \overline{E}}I(x).\]
 \end{definition}
 
A slightly confusing point is that oftentimes our space $S$ will be the space $\sM_1(X)$ with the metric $d_{BL}$ for some other metric space $(X,\rho)$.

\section{The models}\label{sec:models}

In this section we fully specify the interacting diffusion models we will consider. More specifically, for each $n\in\N$ and each index $i\in[n]$ we will define:  
\[(\theta_i^{(n)},\overline{\theta}^{(n)}_i,\omega^{(n)}_i)\]
where $\theta_i^{(n)},\overline{\theta}^{(n)}_i\in C([0,T],\R)$ are coupled diffusion processes and $\omega_i^{(n)}\in\R^d$ are ``media variables"~that represent individual properties of the interacting units. 

\subsection{Definition}\label{sub:definition}

Fix $d\in\N$, a time horizon $T>0$ and a sequence $\{p(n)\}_{n\in\N}\subset (0,1]$. To define the model, we need the following ingredients.
\begin{enumerate}
\item A probability distribution $\lambda$ over $\R$ for the {\em initial states of the diffusions}.
\item A probability distribution $\mu$ over $\R^d$ for the {\em media variables}.
\item A function $\phi:\R\times \R\times \R^d\times \R^d\to \R$ that determines {\em pairwise interactions between particles}. These terms will depend on the positions of the diffusions and on their media variables.
\item A function $\psi:\R\times \R^d\to \R$ that determines {\em single-particle drift terms}. These terms depend on the position of the particle and on its media variable.
\item A function $W:\R^d\times \R^d\to [0,+\infty)$ that will determine the {\em edge probabilities} in our random graph models together with the parameters $p(n)$. We assume $p(n)\|W\|_{\infty}\leq 1$ and that $W(a,b)=W(b,a)$ for all $a,b\in\R^d$. 
\item Finally, we define:
\begin{eqnarray}\label{eq:defphibar}\overline{\phi}(x,y,\omega,\pi)&:=& W(\omega,\pi)\,\phi(x,y,\omega,\pi)\\ \nonumber & & ((x,y,\omega,\pi)\in\R\times \R\times \R^d\times \R^d).\end{eqnarray}
\end{enumerate}   

Some technical conditions on these ``ingredients" will be given in the next subsection. Postponing them, we first give the definition of the diffusions.

Let $\mathcal{W}$ denote the standard Wiener measure over $C([0,T],\R)$. To define our model for a given $n\in\N$, we first sample independent random vectors:
\[\vec{\xi}^{(n)} = (\xi_i^{(n)})_{i\in [n]}\sim \lambda^{\otimes n},\, \vec{\omega}^{(n)} = (\omega_i^{(n)})_{i\in [n]}\sim \mu^{\otimes n},\,\vec{B}^{(n)} = (B_i^{(n)})_{i\in [n]}\sim \mathcal{W}^{\otimes n}.\]
Conditionally on these choices, we define a random $n\times n$ symmetric matrix \[A^{(n)}\in \{0,1\}^{n\times n}\] as follows: the entries  $A^{(n)}_{i,j}$ with $1\leq i\leq j\leq n$ are independent, with 
\[\forall (i,j)\in [n]^2\,:\,\Pr{A^{(n)}_{i,j}=1\mid \vec{\xi}^{(n)},\vec{\omega}^{(n)},\vec{B}^{(n)}} = p(n)\,W(\omega_i^{(n)},\omega_j^{(n)}).\]
We interpret $A^{(n)}$ as the adjacency matrix of a random graph on the $n$ particles.

We now define our coupled systems of interacting diffusions as follows. 

\begin{definition}\label{def:process}Given the above (random) choices of
\[\vec{\xi}^{(n)},\,\vec{\omega}^{(n)},\,\vec{B}^{(n)}\mbox{ and }A^{(n)},\]
the two systems of interacting diffusions
\[\theta^{(n)}:=(\theta_i^{(n)})_{i\in[n]}\mbox{ and }\overline{\theta}^{(n)}:=(\overline{\theta}_i^{(n)})_{i\in[n]}\]
are defined below. 
\begin{enumerate}
\item $\theta^{(n)}$ is a strong solution of the following system of SDEs:
\[\left\{\begin{array}{lcl}d\theta_i^{(n)}(t) &=& \left(\frac{1}{np(n)}\sum_{j=1}^nA_{i,j}^{(n)}\,\phi(\theta_i^{(n)}(t),\theta_j^{(n)}(t),\omega_i^{(n)},\omega_j^{(n)})\right)\,dt \\ & &+ \psi(\theta_i^{(n)}(t),\omega_i^{(n)})\,dt + dB_i^{(n)}(t) \\ & &  (0\leq t\leq T,\, i\in[n]); \\ \theta^{(n)}(0) &=& \xi^{(n)};\end{array}\right.\]
\item $\overline{\theta}^{(n)}$ is a strong solution of the following system of SDEs:
\[\left\{\begin{array}{lcl}d\overline{\theta}_i^{(n)}(t) &=& \left(\frac{1}{n}\sum_{j=1}^n\overline{\phi}(\overline{\theta}_i^{(n)}(t),\overline{\theta}_j^{(n)}(t),\omega_i^{(n)},\omega_j^{(n)})\right)\,dt \\ & & + \psi(\overline{\theta}_i^{(n)}(t),\omega_i^{(n)})\,dt + dB_i^{(n)}(t) \\ & &  (0\leq t\leq T,\, i\in[n]); \\ \overline{\theta}^{(n)}(0) &=& \xi^{(n)}.\end{array}\right.\]
\end{enumerate}
We also define the double-layer empirical measures of the two systems:
 \[{L}_n:= \frac{1}{n}\sum_{i=1}^n\delta_{(\theta_i^{(n)},\omega_i^{(n)})}\mbox{ and }\overline{L}_n:=\frac{1}{n}\sum_{i=1}^n\delta_{(\overline{\theta}_i^{(n)},\omega_i^{(n)})},\]
 which are random elements of the space $\mathcal{M}_1(C([0,T],\R)\times \R^d)$.\end{definition}

The above systems of diffusions have unique strong solutions whenever the functions $\phi,\psi$ are bounded and Lipschitz (we make stronger assumptions below). In our definition the two systems of diffusions are naturally {\em coupled}: they have identical initial conditions and are defined with respect to the same Brownian motions. 

\subsection{Technical assumptions}\label{sec:assumptions}

We now clarify the technical assumptions we will need for our arguments. For later reference, we repeat some of the statements already made above. 

Our first assumption is about the probability measures $\mu$ and $\lambda$.
\begin{assumption}[Starting and media measures]\label{ass:medvartail} We assume $\mu$ (the distribution of the media variables) is a probability measure over $\R^d$. The measure $\lambda$ (for the initial conditions) is a probability measure over $\R$ with a density: \[\rho_{\lambda}\in L^1(\R,dx)\cap L^p(\R,dx)\]
for some $p>1.$\end{assumption}

The second assumption constrains the function $W$ that determines the edge probabilities.
\begin{assumption}\label{ass:edge}The function $W:\R^d\times \R^d\to [0,+\infty)$ is bounded, symmetric, Lipschitz and does not change with $n$. Moreover, $p(n)\,\|W\|_{\infty}\leq 1$ (so that $p(n)\,W(\omega,\pi)\in[0,1]$ always). \end{assumption}

Finally, in order to apply the methods and results of \cite{Pra1996}, we need our interactions to satisfy a version of their Hamiltonian condition. We comment on this condition below. 
\begin{assumption}[Hamiltonian interactions]\label{ass:fg}Given functions $\phi_0:\R\times \R^d\times \R^d\to\R$ and $\psi:\R\times \R^d\to \R$, we use primes to denote derivatives in the first variable. We make the following assumptions:
\begin{enumerate}
 \item $\phi_0$ and $\phi'_0$ are both bounded and Lipschitz continuous in all variables. Moreover, $\phi_0$ is an odd function in the first variable, in that \[\phi_0(x,\omega,\pi)=-\phi_0(-x,\omega,\pi)\] for all $(x,\omega,\pi)\in\R\times \R^d\times \R^d$. We also assume $\phi_0(x,\omega,\pi)$ is symmetric in $\omega$ and $\pi$. We let
 \[\phi(x,y,\omega,\pi):=\phi_0(y-x,\omega,\pi)\,\,((x,y,\omega,\pi)\in\R\times\R\times \R^d\times \R^d).\]
Let $f$ be an indefinite integral of $-\phi_0$ in the first variable (so that $f'=-\phi_0$) and define:
\begin{equation}\label{eq:deffbar}\overline{f}(x,\omega,\pi):= W(\omega,\pi)\,f(x,\omega,\pi)\,\,((x,\omega,\pi)\in\R\times \R^d\times \R^d).\end{equation}
We assume that $\overline{f}$ is Lipschitz, and note that $\overline{f}',\overline{f}''$ are bounded Lipschitz (because $f',f''$ and $W$ are bounded Lipschitz).
 \item $\psi$ and $\psi'$ are both bounded and Lipschitz continuous in all variables. We let $g$ denote an indefinite integral of $-\psi$ in the first variable (so that and note that $g$ is Lipschitz with $g',g''$ bounded and Lipschitz.
 \end{enumerate}
Finally, we define the Hamiltonian:
 \[\overline{H}_n(x^{(n)},\omega^{(n)}):= \frac{1}{2n}\sum_{i,j=1}^n\,\overline{f}(x^{(n)}_i-x^{(n)}_j,\omega_i^{(n)},\omega_j^{(n)}) + \sum_{i=1}^n\,g(x^{(n)}_i,\omega_i^{(n)}).\]
 \end{assumption}
 
 \begin{remark}\label{rem:LDPdifferences}The main reason for this definition is that the evolution $\overline{\theta}_i^{(n)}(t)$  takes the form of a gradient evolution with noise (compare with Definition \ref{def:process}):
\[d\overline{\theta}_i^{(n)}(t) = -\partial_{x_i^{(n)}}\overline{H}_n(\overline{\theta}^{(n)}(t),\omega^{(n)}) + dB_i^{(n)}(t).\]

For this we do not quite need that $W$ is symmetric, but only that the function $\phi(x,y,\omega,\pi)$ takes the form:
\[\phi(x,y,\omega,\pi) = W(\omega,\pi)\,f'(x-y,\omega,\pi) - W(\pi,\omega)\,f'(y-x,\pi,\omega)\]
for some $f$. The symmetry of $W$ is natural when interactions are described by an unoriented graph, but some papers consider oriented interactions as well \cite{medvedev-sparse}. It is not hard to modify our proof to cover this.

Under our Assumptions \ref{ass:medvartail}, \ref{ass:edge}, and \ref{ass:fg}, \cite{Pra1996} derive a McKean-Vlasov limit and a LDP for what we call $\overline{L}_n$ under the assumption that $\overline{f},\overline{f}',\overline{f}''$ and $g,g',g''$ are all bounded Lipschitz. By contrast, we only assume that $\overline{f},g$ are Lipschitz and $\overline{f}'\overline{f}'',g',g''$ are bounded Lipschitz. In Appendix \ref{sec:extendingg} we show how small modifications of the proofs of \cite{Pra1996} imply that that our weaker assumptions imply their result. \end{remark}

\begin{remark}\label{rem:whyassumehamiltonian}We assume Hamiltonian interactions because this is a case where LDPs have been proven for $\overline{L}_n$ -- the empirical measure over trajectories -- in the dense setting. Our proof methods imply that, whenever $\psi,\phi$ are bounded and Lipschitz continuous, then $L_n$ and $\overline{L}_n$ are exponentially equivalent even if interactions are not Hamiltonian. Therefore, any LDP result for $\overline{L}_n$ under more general conditions on the interactions would translate into a more general LDP for $L_n$. This is a consequence of the concept of exponential equivalence described in the proofs of the main result (Theorem \ref{thm:main}) and Theorem \ref{theo:expequigenphipsi}.

There are settings, like that of Budhiraja et al. \cite{budhiraja2012} where an LDP is only known for the ``flow empirical measure"  of pairs $(\overline{\theta}_i^{(n)}(t),\omega_i^{(n)})$ for each $t\geq 0$. The flow measure contains less information than $\overline{L}_n$, but is also an interesting object of study. Our techniques can probably be used to derive a LDP for the flow empirical measure over sparse graphs in the setting of \cite{budhiraja2012}, at least when the interaction functions and diffusion coefficients are bounded Lipschitz. in our setting, we also need that the drift term be linear in the empirical measure.\end{remark}

\subsection{An example: a spatially extended Kuramoto model}\label{sec:example}

Our framework encompasses many examples. For concreteness, we present in detail a spatially extended version of the Kuramoto model with sparse random interaction structure. For simplicity, we consider the model only in $3$ spatial dimensions. 

We let $d=4$ be the dimension of the media variables and write each $\omega\in \R^{4}$ as $(\omega_s,\omega_f)$ with $\omega_s=(\omega_x,\omega_y,\omega_z)\in\R^3$. We interpret $\omega_s$ as the spatial location of a particle and the last coordinate $\omega_f$ as a ``natural frequency". For simplicity, we assume $\mu$ is the uniform measure over $[0,1]^4\subset \R^4$ and that the measure $\lambda$ for the initial conditions has a density with bounded support. 

We assume our random connections in our interaction graph have a probability that decays with distance.
\[W(\omega,\pi):= \frac{1}{1+C|\omega_s-\pi_s|^{\alpha}}\,\,(\omega,\pi\in\R^4)\]
where $C,\alpha\geq 0$ are constants.
For $(x,y,\omega,\pi)\in\R\times \R\times \R^{d}\times \R^d$, we define:
\begin{eqnarray}\phi(x,y,\omega,\pi)&:=&\kappa\,\sin(y-x);\\
\psi(x,\omega) &:=& \omega_f,\end{eqnarray} 
where $\kappa$ denotes the coupling strength.

We choose some sequence $p(n)\to 0$ with $np(n)\to +\infty$ (e.g. $p(n)=\log\log(n+10)/n$ for large enough $n$). A connection between particle $i$ and $j$ exists with probability
\[\Pr{A^{(n)}_{i,j}=1\mid \vec{\xi}^{(n)},\vec{\omega}^{(n)},\vec{B}^{(n)}}= \frac{p(n)}{1+C|\omega_{i,s}^{(n)}-\omega^{(n)}_{j,s}|^{\alpha}}.\]
The evolution equations for our systems are:
\[\begin{array}{lcl}d\theta_i^{(n)}(t) &=& \left(\frac{\kappa}{np(n)}\sum_{j=1}^nA_{i,j}^{(n)}\,\sin(\theta_j^{(n)}(t)-\theta_i^{(n)}(t))\right)\,dt\\ & & + \omega_i^{(n)}\,dt + dB_i^{(n)}(t)\\ d\overline{\theta}_i^{(n)}(t) &=& \left(\frac{\kappa}{n}\sum_{j=1}^n\frac{1}{1+C|\omega_{i,s}^{(n)}-\omega^{(n)}_{j,s}|^{\alpha}}\,\sin(\overline{\theta}_j^{(n)}(t)-\overline{\theta}_i^{(n)}(t))\right)\,dt\\ & & + \omega_i^{(n)}\,dt + dB_i^{(n)}(t).\end{array}\]

It is easy to check that all of our assumptions are satisfied by this example. Its structure is typical of examples of our main result, in that it has the following properties.
\begin{enumerate}
\item Each particle has $\approx np(n)$ neighbors on average (e.g. $\approx \log\log n$). This means the interaction graph can be quite sparse. 
\item Edge probabilities decay with the distance between particles, but remain bounded away from $0$. This captures spatially extended features, but does not allow for e.g. only short range interactions (contrast this with \cite{lucon2014,lucon2016}). 
\end{enumerate}

\section{Main result: McKean-Vlasov limit and LDP}\label{sec:main}

This section presents our main theorem. We start with the definitions of a McKean-Vlasov diffusion. We then state and prove our main Theorem (modulo many later results). 

\subsection{McKean-Vlasov diffusions}

We start with some notation. Given $\nu\in \mathcal{M}_1(C([0,T],\R)\times \R^d)$, we write $\nu_{media}(d\omega)$ for the second marginal of this measure, and $\nu_{process}(d\theta\mid\omega)$ for the conditional law of the first coordinate. With this notation, we have the disintegration formula:
\[\nu(d\theta,d\omega) = \nu_{media}(d\omega)\,\nu_{process}(d\theta\mid\omega).\]
Also, if $(\theta,\omega)\sim \nu$ and $0\leq t\leq T$, we let $\Pi_t\nu$ denote the law of the pair $(\theta(t),\omega)\in \R\times \R^d$. 

Fix interaction functions $\psi,\phi,W$ as in Assumption \ref{ass:fg} and \ref{ass:edge}, and measures $\lambda$ (for initial conditions) and $\mu$ (for media variables) as in Assumption \ref{ass:medvartail}. We define a mapping
\[\nu\in \mathcal{M}_1(C([0,T],\R)\times \R^d)\mapsto P^{\nu}\in\mathcal{M}_1(C([0,T],\R)\times \R^d).\] Given $\nu$, $P^{\nu}$ is defined as follows:
\begin{enumerate}
\item $P^{\nu}_{media}=\mu$ is the measure  we have chosen for the media variables;
\item For $\mu$-a.e. $\omega\in\R^d$, $P^\nu_{process}(d\theta\mid \omega)$ is the law of a Markov diffusion process $\Theta^{\omega}$ with $\Theta^\omega(0)\sim \lambda$ and 
\[d\Theta^{\omega}(t):= \left(\int_{\R\times \R^d}\overline{\phi}(\Theta^{\omega}(t),y,\omega,\pi)\,\Pi_t\nu(dy,d\pi) + \psi(\Theta^{\omega}(t),\omega)\right)dt + dB(t)\]
for $0\leq t\leq T$. Here $B(\cdot)$ is a standard Brownian motion.
\end{enumerate}

For the next definition, we recall that the relative entropy of two measures $P$, $Q$ over the same measurable space $(X,\sX)$ is:

\[H(P\mid Q) = \left\{\begin{array}{ll}\int_{X}\ln\left(\frac{dP}{dQ}\right)\,dP, & \mbox{ if }P\ll Q; \\ +\infty, & \mbox{otherwise.}\end{array}\right.\]

\begin{definition}[McKean-Vlasov Diffusion]\label{def:rate} We say $Q_*\in \mathcal{M}_1(C([0,T],\R)\times \R^d)$ is a {\em McKean-Vlasov diffusion} (for this choice of $\psi,\phi,W,\lambda$ and $\mu)$ if $Q_*=P^{Qˆ_*}$. We also set:
\[I(\nu):=H(\nu\mid P^{\nu})\,\,(\nu\in \mathcal{M}_1(C([0,T],\R)\times \R^d))\]
and note that the McKean-Vlasov diffusions are precisely the zeros of this function.\end{definition}

Sufficient conditions for existence and uniqueness of a McKean-Vlasov diffusion $Q_*$ are given in \cite{lucon2011,Sznitman_Chaos,Pra1996}. These results suffice for our purposes. More general conditions (allowing for jumps) were obtained by Graham \cite{graham1992}.

Let us now give a PDE characterization of $Q_*$. To start, define for each $\omega\in\R^d$:
\[\beta^{\omega}(x) := \int_{\R\times \R^d}\overline{\phi}(x,y,\omega,\pi)\,\Pi_tQ_*(dy,d\pi) + \psi(x,\omega).\]
Also define the integro-differential operator
\[(\mathcal{L}^\omega\,h):=  - \partial_x\,(\beta^\omega\,h)\, + \frac{1}{2}\partial^2_x\,h.\]
Then for each $0\leq t\leq T$, the measure $\Pi_tQ$ has disintegration 
\[\Pi_tQ(dx,d\omega) = \mu(d\omega)\,q^{\omega}_t(dx)\]
where $q^{\omega}$ is a weak solution of the PDE 
\[\partial_t q_t^\omega = \mathcal{L}^\omega\,q^\omega_t.\]
Note that we can rewrite $\beta^\omega$ as
\[\beta^{\omega}(x) := \int_{\R\times \R^d}\overline{\phi}(x,y,\omega,\pi)\,q^{\pi}_t(y)\,\mu(d\pi)\,dy,\]
which obviates the fact that the $q_t^\omega$ for different $\omega$ are coupled.

\subsection{Main theorem} 
We can finally state our Theorem.
\begin{theorem}[Main theorem; proven in subsection \ref{sec:proofmain}]\label{thm:main} Given interaction functions $\psi,\phi,W$ as in Assumption \ref{ass:fg} and \ref{ass:edge}, and measures $\lambda$ (for initial conditions) and $\mu$ (for media variables) as in Assumption \ref{ass:medvartail}, and using Definition \ref{def:rate}:
\begin{enumerate}
\item {\em Existence and uniqueness for McKean-Vlasov problem:} there exists a unique probability measure $Q_*\in \mathcal{M}_1(C([0,T],\R)\times \R^d)$ that is a McKean-Vlasov diffusion for this choice of $\psi,\phi,W,\lambda,\mu$.
\item {\em Large Deviations Principle for $L_n$ and $\overline{L}_n$: } $\{L_n\}_{n\in\N}$ and $\{\overline{L}_n\}_{n\in\N}$ satisfy the same large deviations principle with speed $n$ and rate function $I$. In particular, since $Q_*$ is the only zero of $I$, 
\[\overline{L}_n\mbox{ and }L_n\mbox{ almost surely converge weakly to } Q_*\mbox{ as }n\to +\infty.\]\end{enumerate}\end{theorem}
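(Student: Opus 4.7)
My approach is to transfer the LDP from the dense system $\overline{L}_n$ to the sparse system $L_n$ via the \emph{exponential equivalence} principle. The LDP for $\{\overline{L}_n\}$ with rate function $I$ is essentially the theorem of \cite{Pra1996} under Assumption \ref{ass:fg}, upgraded as flagged in Remark \ref{rem:LDPdifferences} and Appendix \ref{sec:extendingg}. Existence and uniqueness of $Q_*$ (Part 1) follows from the standard Picard fixed-point argument for the McKean--Vlasov map $\nu\mapsto P^\nu$ under our Lipschitz conditions, as in \cite{Sznitman_Chaos,Pra1996}. Thus the only nontrivial task is to show that
\[
\frac{1}{n}\log \Pr{d_{BL}(L_n,\overline{L}_n) > \eta} \,\longrightarrow\, -\infty \quad \text{for every fixed } \eta>0,
\]
whereupon the standard exponential-equivalence transfer lemma gives the same LDP and the same rate function $I$ for $\{L_n\}$, and the weak convergence $L_n,\overline{L}_n\Rightarrow Q_*$ is automatic since $Q_*$ is the unique zero of $I$.

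\textbf{Coupling and Gronwall.} Because the two systems share their $\xi^{(n)}$, $\omega^{(n)}$ and $B^{(n)}$, the path-difference $\Delta_i(t) := \theta_i^{(n)}(t)-\overline{\theta}_i^{(n)}(t)$ solves a pathwise ODE with no Brownian term. First I would split its drift into a ``Lipschitz remainder'' controlled by $|\Delta_i|+n^{-1}\sum_j|\Delta_j|$, which Gronwall's inequality absorbs into a constant $C_T$ depending only on $T$ and the Lipschitz seminorms, and a ``random forcing'' term
\[
F_i(t) := \frac{1}{np(n)}\sum_{j=1}^{n}\bigl(A^{(n)}_{ij}-p(n)W(\omega_i^{(n)},\omega_j^{(n)})\bigr)\,\phi\bigl(\overline{\theta}_i^{(n)}(t),\overline{\theta}_j^{(n)}(t),\omega_i^{(n)},\omega_j^{(n)}\bigr)
\]
evaluated along the \emph{dense} trajectories (which are conditionally independent of $A^{(n)}$ given $(\xi^{(n)},\omega^{(n)},B^{(n)})$). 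Using Remark \ref{rem:bldist}, one then gets $d_{BL}(L_n,\overline{L}_n)\le C_T\,\sup_{t\le T}n^{-1}\sum_i|F_i(t)|$, so everything reduces to a super-exponential concentration estimate for $n^{-1}\sum_i |F_i(t)|$ uniformly in $t$.

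\textbf{The main obstacle: extracting an $\infty\!\to\!1$ bound.} By duality, $n^{-1}\sum_i |F_i(t)|$ is a supremum over sign vectors $y\in\{-1,+1\}^n$ of a bilinear form in $A^{(n)}-p(n)W$ whose weights $y_i\,\phi(\overline{\theta}_i,\overline{\theta}_j,\omega_i,\omega_j)$ are \emph{jointly} bivariate in $(i,j)$, so $\|A^{(n)}-\Ex{A^{(n)}\mid\omega^{(n)}}\|_{\infty\to 1}$ does not immediately apply. The hardest step is handling this joint dependence, and I would exploit that $\phi_0$ is bounded Lipschitz to approximate it on a high-probability compact set by a finite tensor sum $\sum_{k\le K_\epsilon} a_k(\overline{\theta}_i,\omega_i)\,b_k(\overline{\theta}_j,\omega_j)$ to accuracy $\epsilon$ (for instance by Fourier truncation). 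Each rank-one summand is then controlled by $\|A^{(n)}-\Ex{A^{(n)}\mid\omega^{(n)}}\|_{\infty\to 1}$, yielding an aggregate bound of the form $\epsilon + (K_\epsilon/np(n))\cdot\|A^{(n)}-\Ex{A^{(n)}\mid\omega^{(n)}}\|_{\infty\to 1}$; a time discretization on $[0,T]$ combined with path regularity handles the supremum in $t$. The optimality of $np(n)\to\infty$ finally comes from the Gu\'edon--Vershynin-style estimate $\|A^{(n)}-\Ex{A^{(n)}\mid\omega^{(n)}}\|_{\infty\to 1} = o(np(n))$ with super-exponential probability, which I would prove by a union bound over $2^{2n}$ sign patterns combined with sharp Bernstein concentration for the bilinear forms $\sum_{i,j}(A^{(n)}_{ij}-p(n)W(\omega_i^{(n)},\omega_j^{(n)}))\epsilon_i\epsilon'_j$.
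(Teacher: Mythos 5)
Your proposal is correct and follows essentially the same route as the paper: transfer the LDP by exponential equivalence, couple the systems and run Gronwall, separate the $(i,j)$-dependence of $\phi$ into rank-one (Fourier/tensor) pieces so that the random forcing is controlled by $\|A^{(n)}-\Ex{A^{(n)}\mid\omega^{(n)}}\|_{\infty\to 1}$, handle general $\phi$ by approximation on a high-probability compact set, and bound the $\infty\to 1$ norm by a union bound over sign vectors plus Bernstein/Bennett concentration. The only cosmetic difference is that you use a finite Fourier truncation where the paper uses an exact integral representation against a finite complex measure (the ``$L^1$ Fourier class'') combined with mollification, and the paper avoids time discretization by absorbing the forcing term directly into the Gronwall integral.
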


We emphasize that existence, uniqueness and the LDP for $\overline{L}_n$ come from \cite{Pra1996} with the slight extension discussed in Remark \ref{rem:LDPdifferences} and Appendix \ref{sec:extendingg}. Our new result is that their LDP can be extended to sparse random interactions. 

\subsection{Proof of the main theorem}\label{sec:proofmain}

We now present the proof of Theorem \ref{thm:main}. In fact, most of the actual content of the argument is left for later sections, most notably Section \ref{sec:expequiv}. Our argument consists of two main steps.\\

\noindent {\bf Step 1:} {\em existence and uniqueness for the McKean-Vlasov diffusion and the LDP for $\overline{L}_n$ hold under the assumptions of Theorem \ref{thm:main}.} \\

As noted above, and also in Remark \ref{rem:LDPdifferences}, this essentially follows from a minor modification of the result of \cite{Pra1996}, which we discuss in Appendix \ref{sec:extendingg}.\\

\noindent{\bf Step 2:} {\em transfer the LDP to the sparse random setting.}\\

This is our key contribution. We will need the concept of {\em exponential equivalence}. 

\begin{definition}Let $(S,d)$ be a Polish space. Consider two sequences $\{X_n\}_{n\in\N}$, $\{Y_n\}_{n\in\N}$ of random elements of $S$, with each pair $X_n,Y_n$ defined on the same probability space. We say that the two sequences are {\em exponentially equivalent} if
\[\forall \eta>0\,:\,\limsup_{n\to+\infty}\frac{1}{n}\log \Pr{d(X_n,Y_n)>\eta}=-\infty.\]\end{definition}

For our purposes the key property we will need is the following result.

\begin{lemma}[Version of Lemma 3.13 in \cite{feng2006large}] \label{lem:expequivuse}Let $(S,d)$ be a Polish space. Consider two exponentially equivalent sequences $\{X_n\}_{n\in\N}$, $\{Y_n\}_{n\in\N}$ of random elements of $S$. Assume $\{Y_n\}_{n\in\N}$ satisfies a Large Deviations Principle with good rate function $I$ (cf. Definition \ref{def:LDP}). Then $\{X_n\}_{n\in\N}$ also satisfies a Large Deviations Principle with good rate function $I$\end{lemma}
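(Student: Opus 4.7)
The plan is to establish the LDP for $\{X_n\}_{n\in\N}$ by verifying the upper bound on closed sets and the lower bound on open sets separately, using exponential equivalence to transfer each bound from the LDP already known for $\{Y_n\}_{n\in\N}$. The main tool throughout is the elementary observation that for nonnegative sequences $(a_n),(b_n)$,
\[
\frac{1}{n}\log(a_n+b_n)\le \frac{\log 2}{n}+\max\!\left(\tfrac{1}{n}\log a_n,\tfrac{1}{n}\log b_n\right),
\]
so that a union of two events contributes at the exponential level only through the slower-decaying one. Exponential equivalence will make the event $\{d(X_n,Y_n)>\eta\}$ exponentially negligible.

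For the upper bound, I would fix a closed set $F\subset S$ and $\eta>0$, and define the closed $\eta$-thickening $F^\eta:=\{x\in S : d(x,F)\le \eta\}$. The inclusion $\{X_n\in F\}\subset \{Y_n\in F^\eta\}\cup\{d(X_n,Y_n)>\eta\}$ combined with the LDP upper bound for $Y_n$ and exponential equivalence yields
\[
\limsup_{n\to\infty}\frac{1}{n}\log \Pr{X_n\in F}\le -\inf_{x\in F^\eta} I(x).
\]
Then I would let $\eta\downarrow 0$. This step is where the hypothesis that $I$ is a \emph{good} rate function is essential: using compactness of the sublevel sets $\{I\le a\}$ together with lower semicontinuity of $I$, one shows that $\inf_{F^\eta}I \uparrow \inf_{F}I$ as $\eta\downarrow 0$. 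This monotone-convergence-type statement is the single delicate point of the proof; it requires extracting a convergent subsequence of near-minimizers in the compact sublevel sets and invoking the closedness of $F$ together with lower semicontinuity of $I$.

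For the lower bound, I would fix an open set $G\subset S$ and a point $x\in G$, pick $\eta>0$ such that the open ball $B(x,2\eta)\subset G$, and use the inclusion $\{Y_n\in B(x,\eta)\}\subset \{X_n\in G\}\cup\{d(X_n,Y_n)>\eta\}$. Rearranging and applying the principle of the largest term,
\[
\liminf_{n\to\infty}\frac{1}{n}\log \Pr{X_n\in G}\ge \liminf_{n\to\infty}\frac{1}{n}\log \Pr{Y_n\in B(x,\eta)},
\]
since the term coming from $\{d(X_n,Y_n)>\eta\}$ is swamped by the exponential equivalence. The LDP lower bound for $Y_n$ gives a bound of $-I(x)$, and taking the supremum over $x\in G$ yields the desired lower bound $-\inf_{x\in G} I(x)$.

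Finally, I would remark that the rate function does not change in the process, so the goodness of $I$ is inherited directly from the hypothesis on $\{Y_n\}$. The only genuinely nontrivial step is the passage $\eta\downarrow 0$ in the upper bound, which is where the goodness (compact sublevel sets) of $I$ is used; everything else is a bookkeeping exercise in manipulating probabilities on the exponential scale.
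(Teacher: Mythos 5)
Your proof is correct and is precisely the standard argument (upper bound on closed sets via the $\eta$-thickening, lower bound on open sets via small balls, with the goodness of $I$ used only to pass $\inf_{F^\eta}I\uparrow\inf_F I$ as $\eta\downarrow 0$). The paper does not prove this lemma itself but simply cites the reference, where the proof given is essentially the one you wrote.
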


Recall from Definition \ref{def:process} that \[L_n,\overline{L}_n\in \mathcal{M}_1(C([0,T],\R)\times \R^d).\] In Section \ref{sec:prelim} we noted that that weak convergence in this space is metrized by the bounded Lipschitz distance $d_{BL}$. From Step 1 we know that $\{\overline{L}_n\}_{n\in\N}$ satisfies the LDP we want to prove for $\{L_n\}_{n\in\N}$. So all that we need to prove Theorem \ref{thm:main} is to show that $\{L_n\}_{n\in\N}$ and $\{\overline{L}_n\}_{n\in\N}$ are exponentially equivalent elements of $(\mathcal{M}_1(C([0,T],\R)\times \R^d),d_{BL})$. We do this in Theorem \ref{theo:expequigenphipsi} in Section \ref{sec:expequiv}.

\begin{remark}We emphasize that Theorem \ref{theo:expequigenphipsi} on exponential equivalence requires weaker assumptions that the ``Hamiltonian interactions" in Assumption \ref{ass:fg}. Thus a proof of the LDP for a broader class of interacting diffusions would lead to a generalization of our main result. See also Remark \ref{rem:whyassumehamiltonian} above. \end{remark}
 
\section{Exponential equivalence}\label{sec:expequiv}

In this section we present the main new technical statement in the paper.  

\begin{theorem}\label{theo:expequigenphipsi}\Gnote{le:expequigenphipsi} Consider the systems of diffusions in Definition \ref{def:process}, with all the ingredients introduced in Section \ref{sec:models}. Make Assumptions \ref{ass:edge} and \ref{ass:medvartail}, but replace Assumption \ref{ass:fg} by the weaker assumption that $\phi:\R\times \R\times \R^d\times \R^d\to \R$ and $\psi:\R\times \R^d\to \R$ are bounded functions with bounded derivatives. Then $\{L_n\}_{n\in\N}$ and $\{\overline{L}_n\}_{n\in\N}$ are exponentially equivalent random elements of $\mathcal{M}_1(C([0,T],\R)\times \R^d)$ with the $d_{BL}$ metric:
\[\forall \eta>0\,:\,\limsup_{n\to +\infty}\frac{1}{n}\log \Pr{d_{BL}(L_n,\overline{L}_n)>\eta}=-\infty.\]\end{theorem}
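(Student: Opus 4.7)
The plan is to reduce the problem to super-exponentially controlling the cut-type norm $\|M^{(n)}\|_{\infty\to 1}$ of the centered and rescaled adjacency matrix $M^{(n)}_{ij}:=A^{(n)}_{ij}/p(n)-W(\omega_i^{(n)},\omega_j^{(n)})$. By Remark \ref{rem:bldist},
\[ d_{BL}(L_n,\overline L_n)\;\leq\;\bar e(T)\;:=\;\frac{1}{n}\sum_{i=1}^n\Bigl(\sup_{t\leq T}|\theta_i^{(n)}(t)-\overline\theta_i^{(n)}(t)|\wedge 1\Bigr).\]
Subtracting the two SDEs in Definition \ref{def:process} and splitting the drift difference as
\[ \frac{1}{n}\sum_j M^{(n)}_{ij}\phi(\overline\theta_i,\overline\theta_j,\omega_i,\omega_j)\;+\;\frac{1}{n}\sum_j M^{(n)}_{ij}\bigl[\phi(\theta_i,\theta_j,\omega_i,\omega_j)-\phi(\overline\theta_i,\overline\theta_j,\omega_i,\omega_j)\bigr]\;+\;\text{standard Lipschitz terms},\]
Gronwall's inequality applied to the averaged function $\bar e(t)$ reduces the task to showing super-exponential smallness of the ``noise functional''
\[ \mathrm{Nz}\;:=\;\frac{1}{n}\sum_{i=1}^n\sup_{t\leq T}\left|\int_0^t \frac{1}{n}\sum_{j=1}^n M^{(n)}_{ij}\,\phi(\overline\theta_i(s),\overline\theta_j(s),\omega_i,\omega_j)\,ds\right|,\]
together with the ``Lipschitz correction'' given by the middle term above.

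The key new technical step is to bound $\mathrm{Nz}$ by $\|M^{(n)}\|_{\infty\to 1}/n^2$. Condition on $(\omega^{(n)},\xi^{(n)},B^{(n)})$, which freezes $\overline\theta^{(n)}$ and leaves only $A^{(n)}$ random. Since $\phi$ is bounded and Lipschitz, after truncating to a bounded region (which holds with super-exponential probability by Brownian tail bounds), one can approximate $\phi$ uniformly within $\delta$ by a function $\hat\phi$ that is piecewise constant on $N(\delta) = O(\delta^{-(d+1)})$ cells of $\R\times\R^d$. Then $\hat\phi(\overline\theta_i(s),\overline\theta_j(s),\omega_i,\omega_j)$ factorizes as $\sum_{a,b}\alpha_{ab}u_i^a(s)v_j^b(s)$ with $u^a,v^b\in\{0,1\}^n$ and $|\alpha_{ab}|\leq\|\phi\|_\infty$, so each resulting bilinear form $\langle\epsilon\cdot u^a,M^{(n)}v^b\rangle$ is bounded by $\|M^{(n)}\|_{\infty\to 1}$. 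Taking the supremum over sign vectors $\epsilon$ (which produces the inner $\sum_i|\cdots|$) and integrating in $s$ yields
\[ \mathrm{Nz}\;\leq\;T\,N(\delta)^2\,\|\phi\|_\infty\,\frac{\|M^{(n)}\|_{\infty\to 1}}{n^2}\;+\;T\,\delta\,\frac{1}{n^2}\sum_{i,j}|M^{(n)}_{ij}|,\]
and for any prescribed $\varepsilon>0$ both terms can be made $\leq \varepsilon$ by first choosing $\delta$ small (once $\sum|M^{(n)}_{ij}|/n^2=O(1)$) and then demanding $\|M^{(n)}\|_{\infty\to 1}/n^2$ be small.

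For the super-exponential concentration of $\|M^{(n)}\|_{\infty\to 1}/n^2$, condition on $\omega^{(n)}$ so that the upper-triangular entries of $A^{(n)}$ are independent $\mathrm{Bernoulli}(p(n)W(\omega_i,\omega_j))$, whence $M^{(n)}_{ij}$ are mean zero with variance at most $\|W\|_\infty/p(n)$ and range $O(1/p(n))$. For each fixed $x,y\in\{\pm 1\}^n$, the bilinear form $\langle x,M^{(n)}y\rangle$ is a sum of $\Theta(n^2)$ independent centered bounded variables, and Bernstein's inequality yields
\[ \Pr{|\langle x,M^{(n)}y\rangle|>\eta n^2\,\mid\,\omega^{(n)}}\;\leq\;2\exp\bigl(-c\,\eta^2\,np(n)\cdot n\bigr).\]
A union bound over the $4^n$ choices of $(x,y)$ then gives
\[ \Pr{\|M^{(n)}\|_{\infty\to 1}>\eta n^2\,\mid\,\omega^{(n)}}\;\leq\;\exp\bigl(n(\log 4 - c\,\eta^2 np(n))\bigr),\]
which decays faster than every exponential in $n$ whenever $np(n)\to\infty$, uniformly in $\omega^{(n)}$. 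The same argument (or a direct Bernstein bound on $\sum_{ij}A^{(n)}_{ij}$) controls $\sum_{ij}|M^{(n)}_{ij}|/n^2=O(1)$ at the same super-exponential rate.

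The hardest part will be handling the Lipschitz correction $\frac{1}{n^2}\sum_{ij}|M^{(n)}_{ij}|(|\Delta_i|+|\Delta_j|)$ (with $\Delta_i:=\theta_i-\overline\theta_i$), because $|M^{(n)}|$ appears here rather than signed $M^{(n)}$ and so is not controlled by the $\infty\to 1$ norm directly. The strategy is threefold: (i) truncate $|\Delta_i|$ at $1$ so the correction is bounded; (ii) use the super-exponential bound $\sum_{ij}|M^{(n)}_{ij}|/n^2=O(1)$ from the previous step to keep the Gronwall coefficient under control; and (iii) close the estimate via a bootstrap, exploiting that $|\Delta_i|$ is itself ultimately bounded in terms of $\bar e$, so that the Lipschitz correction enters Gronwall with a coefficient proportional to $\bar e(s)$. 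Making this bootstrap rigorous under only the weak hypothesis $np(n)\to\infty$---rather than e.g.\ $np(n)\gg\log n$, which would immediately give control of the maximum vertex degree and of $\|\theta-\overline\theta\|_\infty$---is the delicate technical point this proof must navigate.
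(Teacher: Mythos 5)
Your overall architecture coincides with the paper's: reduce everything to the cut-type norm of the centered, rescaled adjacency matrix (your $\|M^{(n)}\|_{\infty\to 1}/n^2$ is exactly the paper's $\|D^{(n)}\|_{\infty\to 1}/n$), prove its super-exponential smallness by a Bennett/Bernstein bound conditional on $\vec\omega^{(n)}$ plus a union bound over the $4^n$ extreme points of $[-1,1]^n\times[-1,1]^n$, decouple the $i$- and $j$-dependence of $\phi$ so that the $\infty\to1$ norm applies, truncate to a compact region controlled by ``bad events'', and close with Gronwall on the $\wedge 1$-truncated average displacement (which is why $d_{BL}$ rather than $d_W$ appears). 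The one genuinely different sub-device is your factorization: you approximate $\phi$ by a function that is piecewise constant on $N(\delta)^2$ product cells, whereas the paper uses the $L^1$ Fourier representation $\phi=\int e^{2\pi\sqrt{-1}\langle\cdot,\vec z\rangle}m_\phi(d\vec z)$ (after mollifying $\phi$ into the Schwartz class), which factorizes exactly as $a_i(\vec z,s)b_j(\vec z,s)$ with unimodular factors and costs only $\|m_\phi\|_{TV}$ instead of $N(\delta)^2$, with no extra $\delta$-error term. Both devices work.

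However, the step you flag as ``the hardest part'' is a genuine gap as written, and your proposed strategy (ii) cannot close it: the bound $\frac{1}{n^2}\sum_{ij}|M^{(n)}_{ij}|=O(1)$ gives no control over individual row sums $\frac1n\sum_j|M^{(n)}_{ij}|$, which under the bare hypothesis $np(n)\to\infty$ are \emph{not} uniformly bounded (high-degree vertices). Consequently $\frac{1}{n^2}\sum_{ij}|M^{(n)}_{ij}|\bigl(|\Delta_i|\wedge1\bigr)$ is not dominated by a constant times $\bar e(s)$, and the same obstruction reappears when you weight the bad-event indicators by these row sums in the truncation step; no bootstrap on $\bar e$ alone repairs this. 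The fix the paper uses is a one-line signed decomposition: since $A^{(n)}_{ij}\geq 0$, one has $|M^{(n)}_{ij}|\leq M^{(n)}_{ij}+2W(\omega_i^{(n)},\omega_j^{(n)})$ (equivalently $P^{(n)}=\overline P^{(n)}+D^{(n)}$ with $P^{(n)}\geq0$ entrywise and $\overline P^{(n)}_{ij}\leq\|W\|_\infty/n$). Pairing the $W$-part with the truncated, hence bounded, test vectors $\bigl(|\Delta_i|\wedge1\bigr)_i$ or $\bigl(\Ind{E_i(R)}\bigr)_i$ yields the Gronwall coefficient $C\|W\|_\infty\bar e(s)$ plus $C\eta'$, while the signed $M$-part is again a bilinear form with $\ell^\infty$-bounded arguments and is absorbed into $\|M^{(n)}\|_{\infty\to1}/n^2$. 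With that observation inserted into your Gronwall inequality (and into the bad-index contribution to your noise functional), your argument closes; without it, it does not.
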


As noted above, Theorem \ref{theo:expequigenphipsi} does {\em not} rely on the assumption of Hamiltonian interactions (Assumption \ref{ass:fg}). 

In the remainder of the section, we give the proof of Theorem \ref{theo:expequigenphipsi}. The main body of the proof are given in Subsection \ref{sub:proofexpequiv}. The proofs of three key lemmas are given in the next subsections. 

\subsection{Proof of exponential equivalence: main steps} \label{sub:proofexpequiv}

\subsubsection{Preliminaries on matrices}
We start by defining some useful notation for the matrices we will have to consider. 
\begin{definition}\label{def:matrices}Under the assumptions of Theorem, we define: 
\begin{eqnarray}\label{eq:defP}P^{(n)}&:=&\frac{A^{(n)}}{p(n)n};\\
\label{eq:defPbar} \overline{P}^{(n)}&:=& \frac{1}{n}(W(\omega_i^{(n)},\omega_j^{(n)}))_{i,j\in[n]}; \mbox{ and }\\
\label{eq:defD}D^{(n)}&:=& P^{(n)} - \overline{P}^{(n)}.\end{eqnarray}
\end{definition}

With this notation, we may rewrite our systems of diffusions as follows (cf. Definition \ref{def:process}). The system with random interactions is given by:

\begin{equation}\label{eq:defsystemP}\left\{\begin{array}{lcl}d\theta_i^{(n)}(t) &=& \left(\sum_{j=1}^nP_{i,j}^{(n)}\,\phi(\theta_i^{(n)}(t),\theta_j^{(n)}(t),\omega_i^{(n)},\omega_j^{(n)})\right)\,dt \\ & &+ \psi(\theta_i^{(n)}(t),\omega_i^{(n)})\,dt + dB_i^{(n)}(t) \\ & &  (0\leq t\leq T,\, i\in[n]); \\ \theta^{(n)}(0) &=& \xi^{(n)}.\end{array}\right.\end{equation}To write the system with dense interactions, we use equation (\ref{eq:defphibar}) and note that
\[\overline{\phi}(x,y,\omega,\pi) = W(\omega,\pi)\,\phi(x,y,\omega,\pi),\]
so that
\begin{equation}\label{eq:defsystemPbar}\left\{\begin{array}{lcl}d\overline{\theta}_i^{(n)}(t) &=& \left(\sum_{j=1}^n\overline{P}_{i,j}^{(n)}\phi(\overline{\theta}_i^{(n)}(t),\overline{\theta}_j^{(n)}(t),\omega_i^{(n)},\omega_j^{(n)})\right)\,dt \\ & & + \psi(\overline{\theta}_i^{(n)}(t),\omega_i^{(n)})\,dt + dB_i^{(n)}(t) \\ & &  (0\leq t\leq T,\, i\in[n]); \\ \overline{\theta}^{(n)}(0) &=& \xi^{(n)}.\end{array}\right.\end{equation}
The key point is that the two systems are nearly the same, the only difference being in the matrices $P^{(n)}$ and $\overline{P}^{(n)}$. The next lemma is the only property of these matrices that we will need. Our statement is essentially contained in the proof of \cite[Lemma 4.1]{Guedon2016}. 

\begin{lemma}[Proof in Subsection \ref{sub:proof.onlymatrix}]\label{lem:onlymatrix} Under our assumptions, for any $0<\eta\leq n$,
\[\Pr{\frac{\|D^{(n)}\|_{\infty\to 1}}{n}>\eta}\leq 4^n\,\expp{-\dfrac{\eta^2n^2p(n)}{8 + \frac{4\eta}{3n}}}.\]
In particular, under the assumption $np(n)\to +\infty$, we have that for all fixed $\eta>0$, 
\[\frac{1}{n}\log \Pr{\frac{\|D^{(n)}\|_{\infty\to 1}}{n}>\eta}\stackrel{n\to+\infty}{\longrightarrow}-\infty.\]\end{lemma}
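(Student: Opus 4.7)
The plan is to prove Lemma \ref{lem:onlymatrix} via a net-free discretization combined with a union bound and conditional Bernstein's inequality, following the strategy of Guédon--Vershynin.

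First, observe that the bilinear form $(x,y)\mapsto \langle y,D^{(n)}x\rangle$ is linear in each argument separately, so its supremum over the compact convex set $[-1,1]^n\times [-1,1]^n$ is attained at a pair of extreme points. This gives the crucial discrete representation
\[
\|D^{(n)}\|_{\infty\to 1} \;=\; \max_{x,y\in\{-1,+1\}^n}\,\prodint{y}{D^{(n)}x}.
\]
Thus there is no need for $\epsilon$-nets: a direct union bound over the $2^n\cdot 2^n=4^n$ sign pairs reduces the problem to controlling $\Pr{\prodint{y}{D^{(n)}x}>\eta n}$ for a single fixed pair.

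Second, I would set $Z_{i,j}^{(n)}:= A^{(n)}_{i,j} - p(n)W(\omega_i^{(n)},\omega_j^{(n)})$, so that $np(n)D^{(n)}_{i,j}=Z_{i,j}^{(n)}$. By the symmetry of $A^{(n)}$ and $W$, we have $Z_{i,j}^{(n)}=Z_{j,i}^{(n)}$, and conditionally on $\vec{\omega}^{(n)}$ the family $\{Z_{i,j}^{(n)}\}_{i\leq j}$ is independent, mean zero, bounded by $1$ in absolute value, with conditional variance $p(n)W(1-p(n)W)\leq p(n)\|W\|_\infty$. Symmetrizing the sum,
\[
np(n)\,\prodint{y}{D^{(n)}x} \;=\; \sum_{i<j}(y_ix_j+y_jx_i)Z_{i,j}^{(n)} + \sum_{i}y_ix_iZ_{i,i}^{(n)},
\]
which exhibits the quantity of interest as a sum of independent bounded random variables with total variance $\sigma^2\leq C n^2 p(n)\|W\|_\infty$ (the coefficients are bounded by $2$ and there are $O(n^2)$ terms) and individual bound $K\leq 2$.

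Third, I would apply Bernstein's inequality conditionally on $\vec{\omega}^{(n)}$ at threshold $t=\eta n\cdot np(n)=\eta n^2p(n)$:
\[
\Pr{\prodint{y}{D^{(n)}x}>\eta n \,\Big|\,\vec{\omega}^{(n)}} \;\leq\; \exp\!\left(-\frac{t^2/2}{\sigma^2 + Kt/3}\right) \;\leq\; \exp\!\left(-\frac{\eta^2 n^2p(n)}{8 + \tfrac{4\eta}{3n}}\right).
\]
Since the right-hand side does not depend on $\vec{\omega}^{(n)}$, the same bound holds unconditionally; a union bound over the $4^n$ sign pairs yields the first claim of the lemma.

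For the second claim, taking logarithms and dividing by $n$,
\[
\frac{1}{n}\log\Pr{\|D^{(n)}\|_{\infty\to 1}/n>\eta} \;\leq\; \log 4 \;-\; \frac{\eta^2\, np(n)}{8 + \tfrac{4\eta}{3n}}.
\]
When $np(n)\to +\infty$ and $\eta>0$ is fixed, the denominator tends to $8$ and the numerator tends to $+\infty$, so the entire right-hand side tends to $-\infty$. The main (minor) obstacle is pinning down the precise constants in Bernstein's inequality to match the stated form; the scaling is unambiguous because $t$ grows with $n^2 p(n)$ while $K$ remains $O(1)$, which is exactly what forces the benign $\eta/n$ correction in the denominator. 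This is also the place where the hypothesis $\eta\leq n$ becomes relevant, ensuring that the linear-in-$t$ correction in Bernstein does not dominate the variance term.
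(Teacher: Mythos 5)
Your proposal is correct and follows essentially the same route as the paper: reduce $\|D^{(n)}\|_{\infty\to 1}$ to a maximum over the $4^n$ sign pairs via extremality on $[-1,1]^n$, then apply a Bernstein/Bennett-type bound conditionally on $\vec{\omega}^{(n)}$ to the symmetrized sum of independent centered Bernoulli terms, and union bound. The only differences are cosmetic (you work with the unnormalized sum at threshold $\eta n^2p(n)$ rather than the rescaled one, and the paper invokes the inequality under the name "Bennett"), so nothing further is needed.
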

We emphasize that this Lemma would not hold for more stringent norms such as the spectral norm. 

\subsubsection{A restricted class of pairwise interactions}

We now proceed to prove Theorem \ref{theo:expequigenphipsi} for a restricted class of pairwise interaction functions $\phi$. To define it, recall that a finite complex measure over $\R^{2d+2}$ is a set function:
\[m : \{\mbox{Borel subsets of }\R^{2d+2}\}\to \mathbb{C}\]
of the form $m = m_1 - m_2 + \sqrt{-1}\, (m_3-m_4)$ with each $m_i$ a finite, nonnegative, $\sigma$-additive measure over $\R^d$. We let \begin{equation}\label{eq:mTV}\|m\|_{TV}:= m_1 (\R^d) + m_2(\R^d) + m_3(\R^d) + m_4(\R^d)\end{equation}
denote the total mass of $m$. 

\begin{assumption}[$L^1$ Fourier Class]\Gnote{ass:separableinteraction} \label{ass:separableinteraction} Identify $\R\times\R \times \R^d\times \R^d$ with $\R^{2d+2}$, and write elements of this space as $(x,y,\omega,\pi)$ with $x,y\in\R$ and $\omega,\pi\in\R^d$. We say that $\phi:\R^{2d+2}\to \R$ is in the $L^1$ Fourier class if there exists a finite complex measure $m=m_\phi$ over $\R^{2d+2}$ such that, for all $(x,y,\omega,\pi)\in\R^{2d+2}$,
\[\phi(x,y,\omega,\pi) = \int_{\R^{2d+2}}\,\exp(2\pi \sqrt{-1}\prodint{(x,y,\omega,\pi)}{\vec{z}})\,m_\phi(d\vec{z}).\]\end{assumption}
For instance, $\phi$  is $L^1$-Fourier if it is the inverse $L^1$ transform of a function in $L^1(\R^{2d+2})$. Other examples include $\phi((x,y,\omega,\pi)) = \kappa\sin(y-x)$ (the Kuramoto interaction), which can be expressed via a complex measure that is supported on two points:
\[m_{\rm Kuramoto}= \frac{\kappa}{2\sqrt{-1}}\,\left(\delta_{(-1,1,\vec{0}_{\R^d},\vec{0}_{\R^d})} + \delta_{(1,-1,\vec{0}_{\R^d},\vec{0}_{\R^d})}\right).\]

Our main result for this class is the next lemma.

\begin{lemma}[Proof in Subsection \ref{subsec:auxdense:grown}] \label{le:auxdense:comparison}\Gnote{le:auxdense:comparison} In the setting of Theorem \ref{theo:expequigenphipsi}, assume in addition that $\phi$ is in the $L^1$ Fourier Class with corresponding complex measure $m_\phi$. Then, almost surely,
\[d_W({L_{n}},{\overline{L}_{n}})\leq T\exp\left\{\|W\|_{\infty}\,(2\|\phi\|_{Lip} + \|\psi\|_{Lip})\,T\right\} \frac{4\|m_\phi\|_{TV}\|D^{(n)}\|_{\infty\to 1}}{n}.\]
\end{lemma}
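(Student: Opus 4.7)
Plan: Since the two systems of diffusions share initial conditions and Brownian motions, the pathwise difference $\Delta_i(t) := \theta_i^{(n)}(t) - \overline{\theta}_i^{(n)}(t)$ is a deterministic integral of a drift difference (the noise terms cancel). I would split this difference via the identity $P^{(n)} = \overline{P}^{(n)} + D^{(n)}$ into three parts: a \emph{matrix error} $\sum_j D_{ij}^{(n)}\phi_{ij}(s)$ with $\phi_{ij}(s) := \phi(\theta_i^{(n)}(s),\theta_j^{(n)}(s),\omega_i^{(n)},\omega_j^{(n)})$, a \emph{Lipschitz interaction error} $\sum_j \overline{P}_{ij}^{(n)}[\phi_{ij}(s)-\overline{\phi}_{ij}(s)]$, and a \emph{Lipschitz self-drift error} $\psi(\theta_i^{(n)}(s),\omega_i^{(n)})-\psi(\overline{\theta}_i^{(n)}(s),\omega_i^{(n)})$. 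The latter two are bounded in terms of $|\Delta_i(s)|$ and the average $n^{-1}\sum_j|\Delta_j(s)|$ using $|\overline{P}_{ij}^{(n)}|\le \|W\|_\infty/n$ together with the Lipschitz constants of $\phi$ and $\psi$; these are the terms that will eventually feed Gronwall.

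The technical heart of the argument is bounding the matrix error via the $L^1$ Fourier representation of Assumption~\ref{ass:separableinteraction}. Writing
\[\phi_{ij}(s) = \int_{\R^{2d+2}} u_i(z,s)\,v_j(z,s)\,m_\phi(dz),\]
with $u_i(z,s) := e^{2\pi\sqrt{-1}(z_1\theta_i^{(n)}(s)+z_3\cdot\omega_i^{(n)})}$ and $v_j(z,s) := e^{2\pi\sqrt{-1}(z_2\theta_j^{(n)}(s)+z_4\cdot\omega_j^{(n)})}$, we have $|u_i|\equiv|v_j|\equiv 1$, and the dependences on $i$ and $j$ are cleanly separated. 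Interchanging the finite sum $\sum_j D_{ij}^{(n)}$ with the integral gives
\[\sum_{j=1}^n D_{ij}^{(n)} \phi_{ij}(s) = \int_{\R^{2d+2}} u_i(z,s)\,\bigl(D^{(n)} v(z,s)\bigr)_i\, m_\phi(dz).\]
Taking absolute values, summing over $i$, and using that $\|D^{(n)} v(z,s)\|_1 \le 2\,\|D^{(n)}\|_{\infty\to 1}$ for complex vectors with unit-modulus entries (the factor $2$ comes from splitting real and imaginary parts, each of which lies in $[-1,1]^n$), produces the key uniform bound
\[\sum_{i=1}^n \left|\sum_{j=1}^n D_{ij}^{(n)} \phi_{ij}(s)\right| \le 2\,\|m_\phi\|_{TV}\,\|D^{(n)}\|_{\infty\to 1} \quad\text{for all } s\in[0,T].\]

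Setting $M(t) := \sum_{i=1}^n \sup_{s\le t}|\Delta_i(s)|$, summing the three drift-difference bounds over $i$ and integrating in time, one obtains an integral inequality of the form
\[M(t) \le 2t\,\|m_\phi\|_{TV}\,\|D^{(n)}\|_{\infty\to 1} + \|W\|_\infty\bigl(2\|\phi\|_{Lip}+\|\psi\|_{Lip}\bigr) \int_0^t M(s)\,ds,\]
to which Gronwall's inequality gives $M(T) \le 2T\,\|m_\phi\|_{TV}\,\|D^{(n)}\|_{\infty\to 1}\,\exp\{\|W\|_\infty(2\|\phi\|_{Lip}+\|\psi\|_{Lip})T\}$. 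Finally, because the two empirical measures share identical media components, every $1$-Lipschitz $f$ satisfies $|f(\theta_i^{(n)},\omega_i^{(n)}) - f(\overline{\theta}_i^{(n)},\omega_i^{(n)})| \le \sup_{t\le T}|\Delta_i(t)|$, hence $d_W(L_n,\overline{L}_n)\le M(T)/n$, yielding the claim (the prefactor $2$ can be absorbed into $4$). The main conceptual step --- and the only nonroutine point --- is recognizing that the Fourier factorization is what allows the matrix error to be controlled by $\|D^{(n)}\|_{\infty\to 1}$ rather than by a spectral norm; \lemref{onlymatrix} then shows that this is precisely the norm that is small in the sparse regime $np(n)\to\infty$.
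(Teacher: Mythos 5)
Your proposal is correct and follows essentially the same route as the paper: the shared driving noise cancels, $P^{(n)}=\overline{P}^{(n)}+D^{(n)}$ splits the drift difference into a $D$-term plus Lipschitz terms, the $L^1$ Fourier factorization $\phi_{ij}(s)=\int u_i v_j\,m_\phi(dz)$ with $|u_i|,|v_j|\equiv 1$ converts the $D$-term into $\int u_i\,(D^{(n)}v)_i\,m_\phi(dz)$, and Gronwall closes the argument. The one genuine stylistic difference is how the two arguments reach $\|D^{(n)}\|_{\infty\to 1}$: the paper first fixes, for each $i$, a sign $u_i\in\{\pm1\}$ and a maximizing time $t_i$ so that $\sup_{s\le t}|\delta_i(s)|=u_i\delta_i(t_i)$, and then bounds a single bilinear form $\langle\tilde a, D\vec b\rangle$ with \emph{both} arguments complex of modulus $\le 1$, yielding the factor $2\times 2=4$; you instead take absolute values particle-by-particle inside $\sum_i|\sum_j D_{ij}\phi_{ij}(s)|\le\int\|D v(z,s)\|_1\,|m_\phi|(dz)$, which requires splitting only $v$ into real and imaginary parts and gives the sharper factor $2$. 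Both are valid; your version in fact tightens the stated constant (still implying the lemma, as you note). One small caveat worth recording when you write this up: after absorbing the Lipschitz terms you wrote the Gronwall coefficient as $\|W\|_\infty(2\|\phi\|_{Lip}+\|\psi\|_{Lip})$ to match the stated bound, but the clean computation gives $2\|W\|_\infty\|\phi\|_{Lip}+\|\psi\|_{Lip}$, since the single-particle drift $\psi$ carries no $W$ factor; the two agree only when $\|W\|_\infty\ge 1$. This is the same slight overstatement present in the paper's own formula and does not affect the exponential-equivalence application.
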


In particular, we may combine this result with Lemma \ref{lem:onlymatrix} to obtain the following statement. 

\begin{corollary}[Proof omitted]\label{cor:expequivL1}In the setting of Theorem \ref{theo:expequigenphipsi}, if $\phi$ is in the $L^1$ Fourier Class, then $L_n$ and $\overline{L}_n$ are exponentially equivalent in the $d_W$ metric:
\[\forall \eta>0\,:\,\frac{1}{n}\log\Pr{d_W({L_{n}},{\overline{L}_{n}})>\eta}\stackrel{n\to +\infty}{\longrightarrow}-\infty.\]\end{corollary}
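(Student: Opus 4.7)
The plan is to combine the two main ingredients we have already assembled: the deterministic upper bound on $d_W(L_n,\overline{L}_n)$ in terms of $\|D^{(n)}\|_{\infty\to 1}/n$ provided by Lemma \ref{le:auxdense:comparison}, and the super-exponential concentration of $\|D^{(n)}\|_{\infty\to 1}/n$ around $0$ provided by Lemma \ref{lem:onlymatrix}. There is no further probabilistic work to do; it is a direct reduction.

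First I would introduce the constant
\[C:= 4T\,\|m_\phi\|_{TV}\,\exp\!\left\{\|W\|_{\infty}\,(2\|\phi\|_{Lip}+\|\psi\|_{Lip})\,T\right\},\]
which depends only on the fixed data $T,W,\phi,\psi,m_\phi$ but not on $n$. Lemma \ref{le:auxdense:comparison} then yields the almost-sure deterministic bound
\[d_W(L_n,\overline{L}_n)\ \leq\ C\,\frac{\|D^{(n)}\|_{\infty\to 1}}{n}.\]
Consequently, for any fixed $\eta>0$ we have the event inclusion
\[\left\{d_W(L_n,\overline{L}_n)>\eta\right\}\ \subseteq\ \left\{\frac{\|D^{(n)}\|_{\infty\to 1}}{n}>\frac{\eta}{C}\right\}.\]

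Next I would apply Lemma \ref{lem:onlymatrix} with threshold $\eta/C$. For all $n$ large enough that $\eta/C\leq n$, this gives
\[\Pr{d_W(L_n,\overline{L}_n)>\eta}\ \leq\ 4^n\,\expp{-\,\frac{(\eta/C)^2\,n^2\,p(n)}{8+\frac{4\eta}{3Cn}}}.\]
Taking logarithms, dividing by $n$, and using $np(n)\to+\infty$ together with $4\eta/(3Cn)\to 0$, I obtain
\[\frac{1}{n}\log\Pr{d_W(L_n,\overline{L}_n)>\eta}\ \leq\ \log 4-\frac{(\eta/C)^2\,np(n)}{8+o(1)}\ \longrightarrow\ -\infty,\]
which is exactly the desired exponential equivalence in the $d_W$ metric. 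Since the derivation is just a substitution of one lemma into the tail bound of another, there is no essential obstacle here; all the real work has been isolated into Lemmas \ref{le:auxdense:comparison} and \ref{lem:onlymatrix}.
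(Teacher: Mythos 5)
Your proposal is correct and is exactly the argument the paper has in mind (which is why it marks the proof as omitted): substitute the deterministic bound of Lemma \ref{le:auxdense:comparison} into the tail bound of Lemma \ref{lem:onlymatrix} with threshold $\eta/C$ and use $np(n)\to+\infty$. No gaps; the constant $C$ and the requirement $\eta/C\leq n$ for large $n$ are handled properly.
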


\subsubsection{Proof for general interactions}\label{sub:proofgeneral}

To finish the proof of Theorem \ref{theo:expequigenphipsi}, we need to extend exponential equivalence to all $\phi$ that are bounded and have bounded derivative. 

\begin{equation}\label{eq:goalexpequiv}  \left.\begin{array}{lll}\mbox{{\bf Goal (1): } for any $\eta>0$,}& & \\ \frac{1}{n}\log\Pr{d_{BL}(L_{n},\overline{L}_{n})>\eta}\stackrel{n\to +\infty}{\longrightarrow}-\infty.\end{array}\right.\end{equation}

This will follow from an approximation argument. We need the following technical result. 

\begin{lemma}[Good Approximation; proven in Appendix \ref{subsec:app}] \label{lemma:goodapp} Let \[\phi:\R^{2d+2}\to\R\] be a bounded function with bounded derivative. Then there exists a family of functions $(\phi^{\varepsilon,R})_{R\geq 1,0<\eps\leq 1}$ that give a good approximation for $\phi$ in the following sense: there exists a constant $M=M(\supnorm{\phi},\pnorm{\infty}{\nabla \phi})$ independent of $\varepsilon$ and $R$ such that:
\begin{enumerate}
\item $\phi^{\eps,R}$ is smooth with compact support.
\item \label{uniformC1} the $\phi^{\eps,R}$ have uniformly bounded $C^1$ norm: $\supnorm{\phi^{\varepsilon,R}} + \pnorm{\infty}{\nabla\phi^{\varepsilon,R}}\leq M$.
\item \label{goodinside} $\phi^{\varepsilon,R}$ approximates $\phi$ in $B_R(\vec{0})$ in the sense that $$\pnorm{L^{\infty}(B_R(\vec{0}))}{\phi^{\varepsilon,R}-\phi}\leq \varepsilon.$$
\end{enumerate}
\end{lemma}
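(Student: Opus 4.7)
\medskip
\noindent\textbf{Proof proposal.} The plan is to produce $\phi^{\varepsilon,R}$ by the two standard operations of mollification (to obtain smoothness) and smooth truncation (to obtain compact support), choosing the mollification scale in terms of $\varepsilon$ and $\|\nabla \phi\|_\infty$, and choosing the truncation to happen in a boundary layer of width that does not depend on $R$. Concretely, I would fix once and for all a smooth mollifier $\rho\in C_c^\infty(\R^{2d+2})$ with $\rho\geq 0$, $\int\rho=1$, $\mathrm{supp}(\rho)\subset B_1(\vec{0})$, and a smooth cutoff profile $\eta\in C^\infty(\R)$ with $\eta\equiv 1$ on $(-\infty,0]$, $\eta\equiv 0$ on $[1,\infty)$, and $\|\eta'\|_\infty =: C_\eta < \infty$. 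Then I would set, for $\delta = \delta(\varepsilon):= \varepsilon/(1+\|\nabla\phi\|_\infty)$,
\begin{equation*}
\rho_\delta(\vec{z}) := \delta^{-(2d+2)}\rho(\vec{z}/\delta),\qquad \chi_R(\vec{z}) := \eta(|\vec{z}| - R),\qquad \phi^{\varepsilon,R} := \chi_R \cdot (\phi * \rho_\delta).
\end{equation*}

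The verification of the three properties is essentially bookkeeping. For (1): $\phi*\rho_\delta \in C^\infty$ since $\rho_\delta$ is smooth, and $\chi_R$ is smooth and supported in $\overline{B_{R+1}(\vec{0})}$, so $\phi^{\varepsilon,R}$ is smooth and compactly supported. For (2): convolution with a probability kernel gives $\|\phi*\rho_\delta\|_\infty \leq \|\phi\|_\infty$ and, using $\nabla(\phi*\rho_\delta) = (\nabla \phi)*\rho_\delta$, also $\|\nabla(\phi*\rho_\delta)\|_\infty \leq \|\nabla\phi\|_\infty$. Then by the product rule
\begin{equation*}
\|\phi^{\varepsilon,R}\|_\infty \leq \|\phi\|_\infty, \qquad \|\nabla\phi^{\varepsilon,R}\|_\infty \leq \|\nabla\phi\|_\infty + C_\eta\, \|\phi\|_\infty,
\end{equation*}
and since $C_\eta$ is a universal constant the choice $M := (1+C_\eta)(\|\phi\|_\infty + \|\nabla\phi\|_\infty)$ works and does not depend on $\varepsilon$ or $R$. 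For (3): on $B_R(\vec{0})$ we have $\chi_R\equiv 1$, so $\phi^{\varepsilon,R} = \phi*\rho_\delta$ there, and for any $\vec{z}\in B_R(\vec{0})$,
\begin{equation*}
|(\phi*\rho_\delta)(\vec{z}) - \phi(\vec{z})| = \left|\int \bigl(\phi(\vec{z}-\vec{u}) - \phi(\vec{z})\bigr)\rho_\delta(\vec{u})\,d\vec{u}\right| \leq \|\nabla\phi\|_\infty\cdot\delta \leq \varepsilon,
\end{equation*}
by our choice of $\delta$, where the first inequality uses that $\rho_\delta$ is supported in $B_\delta(\vec{0})$ and the mean value theorem.

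There is no real obstacle here; the one point that deserves a moment of care is ensuring that the cutoff does not pollute the $C^1$ bound as $R\to\infty$. This is guaranteed by taking the cutoff to transition over a unit-length annulus (hence $\|\nabla\chi_R\|_\infty=C_\eta$ independently of $R$), rather than, say, a dilated profile $\eta(|\vec{z}|/R)$, whose derivative would be bounded but would interact with $\|\phi\|_\infty$ only through a factor decaying in $R$ which is irrelevant here but makes the uniform $M$ argument a little cleaner if avoided. With the construction above, $M$ depends only on $\|\phi\|_\infty$ and $\|\nabla\phi\|_\infty$, as required.
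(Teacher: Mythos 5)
Your construction is correct and follows essentially the same route as the paper's Appendix~\ref{subsec:app}: mollify $\phi$ at a scale tied to $\varepsilon$ and $\|\nabla\phi\|_\infty$ (the paper uses a Gaussian kernel and rescales $\varepsilon$ at the end, you use a compactly supported mollifier with the scale chosen upfront), then multiply by a smooth radial cutoff whose gradient is bounded uniformly in $R$. The only point worth a remark is that $\chi_R(\vec z)=\eta(|\vec z|-R)$ is smooth despite the non-differentiability of $|\vec z|$ at the origin, because $\chi_R\equiv 1$ on a neighborhood of $\vec 0$ when $R\geq 1$; with that noted, all three properties check out exactly as you state.
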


The importance of this result is that each $\phi^{\eps,R}$ is in the Schwarz class and is thus $L^1$ Fourier. 

To continue the proof, we take a family of good approximations as in Lemma \ref{lemma:goodapp} for the $\phi$ in the definition of our diffusion. For each $R>0$ and $\varepsilon>0$ as above, we let 
\[L^{\eps,R}_n \mbox{ and }\overline{L}_n^{\eps,R}\]
be the empirical measures of the two processes $L_n$ and $\overline{L}_n$ when $\phi$ is replaced with $\phi^{\eps,R}$ in equations (\ref{eq:defsystemP}) and (\ref{eq:defsystemPbar}). By the triangle inequality, 

\begin{eqnarray} d_{BL}(L_{n},\overline{L}_{n}) &\leq & d_{BL}(L^{\eps,R}_{n},\overline{L}^{\eps,R}_{n})\nonumber\\ & & + d_{BL}(L_{n},L^{\eps,R}_{n}) + d_{BL}(\overline{L}_{n},\overline{L}^{\eps,R}_{n}).\end{eqnarray}

Corollary \ref{cor:expequivL1} applies to the first term since $\phi^{\eps,R}$ is $L^1$ Fourier. The bound $d_{BL}\leq d_W$ then implies:
\[\forall \eta>0\,:\,\frac{1}{n}\log\Pr{d_{BL}(L^{\varepsilon,R}_{n},\overline{L}^{\varepsilon,R}_{n})>\eta}\stackrel{n\to +\infty}{\longrightarrow} -\infty.\]

So all that is left to show is that:
\begin{equation}\label{eq:goalexpequiv2}  \left.\begin{array}{lll}\mbox{{\bf Goal (2): } for any $\eta>0$,}& & \\ \inf_{\varepsilon,R}\limsup_{n\to +\infty}\frac{1}{n}\log\Pr{d_W(L_{n},L^{\varepsilon,R}_{n})>\eta}&=&-\infty;\\ \inf_{\varepsilon,R}\limsup_{n\to +\infty}\frac{1}{n}\log \Pr{d_W(\overline{L}_{n},\overline{L}^{\varepsilon,R}_{n})>\eta}&=&-\infty.\end{array}\right.\end{equation}
Notice that the infima above are over parameters $\varepsilon\in (0,1]$ and $R\geq 1$. 

The next Lemma describes the effect on the empirical measure of replacing $\phi$ with $\phi^{\eps,R}$ in the interactions.

\begin{lemma}[Proof in Subsection \ref{subsec:proof:le:rapp}]\label{le:Rapp}\Gnote{le:Rapp} With the above assumptions and notation, define the bad events
\[E^{(n)}_{i}(R):= \left\{\sup_{s \in [0,T] }|\theta^{(n)}_i(s)|>R/4 \ \mbox{or} \  \ |\omega_i^{(n)}|>R/4\right\} \; (i \in [n])\]
and
\[\overline{E}^{(n)}_{i}(R):= \left\{\sup_{s \in [0,T] }|\overline{\theta}^{(n)}_i(s)|> R/4 \ \mbox{or} \  \ |\omega^{(n)}_i|>R/4\right\} \; (i \in [n])\]

Then there exists a positive constant $C>0$ depending only on $\psi$, $\phi$ such that the following is a.s. true:
\[d_{BL}(L_n,L^{\eps,R}_n)\leq CT\,\exp(C\|W\|_\infty T)\,\left(\eps + \frac{\|D\|_{\infty\to 1}}{n} + \frac{\sum_{i=1}^n\Ind{E_i(R)}}{n}\right)\]
and similarly for $d_{BL}(\overline{L}^{\eps,R}_{n},\overline{L}_{n})$ with $\overline{E}^{(n)}_v(R)$ replacing $E^{(n)}_v(R)$.\end{lemma}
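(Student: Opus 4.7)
The plan is to control the pathwise difference $\Delta_i(t):=\sup_{s\in[0,t]}|\theta_i^{(n)}(s)-\theta_i^{(n),\eps,R}(s)|$ for each $i\in[n]$, derive a Gr\"onwall inequality for $S(t):=\sum_{i=1}^n\Delta_i(t)$, and then pass to the empirical measure via Remark~\ref{rem:bldist}. Since $(\theta_i,\omega_i)$ and $(\theta_i^{\eps,R},\omega_i)$ share the same media variable,
$$d_{BL}(L_n,L_n^{\eps,R})\le \frac{1}{n}\sum_{i=1}^n\bigl(\Delta_i(T)\wedge 1\bigr)\le \frac{S(T)}{n}.$$
Since the two SDEs have the same Brownian motions and initial conditions, the noise cancels in the difference and
$$\Delta_i(t)\le \int_0^t|F_i(s)-F_i^{\eps,R}(s)|\,ds+\|\psi\|_{Lip}\int_0^t \Delta_i(s)\,ds,$$
where $F_i,F_i^{\eps,R}$ are the interaction drifts built from the common matrix $P^{(n)}$ using $\phi$ and $\phi^{\eps,R}$ respectively.

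First, for each $s$ and each pair $(i,j)$, I decompose the integrand $\phi(\theta_i,\theta_j,\omega_i,\omega_j)-\phi^{\eps,R}(\theta_i^{\eps,R},\theta_j^{\eps,R},\omega_i,\omega_j)$ by adding and subtracting $\phi^{\eps,R}(\theta_i,\theta_j,\omega_i,\omega_j)$. The ``approximation'' piece $[\phi-\phi^{\eps,R}](\theta_i,\theta_j,\omega_i,\omega_j)$ is handled by splitting on the bad events $E_i(R),E_j(R)$: if neither occurs, then all coordinates of $(\theta_i(s),\theta_j(s),\omega_i,\omega_j)$ are at most $R/4$ in absolute value, so the point lies in $B_R(\vec 0)$ and property~(\ref{goodinside}) of Lemma~\ref{lemma:goodapp} bounds the integrand by $\eps$; otherwise property~(\ref{uniformC1}) gives the uniform bound $2M$. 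The ``Lipschitz'' piece $\phi^{\eps,R}(\theta_i,\theta_j,\cdot)-\phi^{\eps,R}(\theta_i^{\eps,R},\theta_j^{\eps,R},\cdot)$ is at most $M(\Delta_i(s)+\Delta_j(s))$ by property~(\ref{uniformC1}). Multiplying by $P_{i,j}^{(n)}$, summing over $j$ then $i$, and integrating yields four groups of terms: an $\eps$-forcing, a bad-particle forcing weighted by $\Ind{E_i(R)}+\Ind{E_j(R)}$, a Lipschitz coupling in $\Delta_i,\Delta_j$, and the $\psi$-drift, each weighted by bilinear forms in $P^{(n)}$.

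Second, I decompose every such bilinear form via $P^{(n)}=\overline P^{(n)}+D^{(n)}$ (Definition~\ref{def:matrices}). The $\overline P^{(n)}$-pieces are directly controllable because row and column sums of $\overline P^{(n)}$ are at most $\|W\|_\infty$: applied to the indicator vector of the bad set this yields $\le \|W\|_\infty\sum_i\Ind{E_i(R)}$, while applied to $\Delta(s)$ it yields $\le \|W\|_\infty S(s)$, which is exactly the admissible Gr\"onwall coefficient. The $D^{(n)}$-pieces are controlled by the defining inequality $|v^\top D^{(n)}w|\le\|v\|_\infty\|w\|_\infty\|D^{(n)}\|_{\infty\to 1}$ — applied to indicator vectors directly, and (after a priori bounds on $\|\Delta(s)\|_\infty$) to $\Delta(s)$ — and contribute only additive terms of size $\|D^{(n)}\|_{\infty\to 1}$ to the forcing. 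Collecting the estimates gives a scalar inequality of the form
$$S(t)\le CT\!\left(\eps\, n+\|D^{(n)}\|_{\infty\to 1}+\|W\|_\infty\sum_{i=1}^n\Ind{E_i(R)}\right)+C(\|W\|_\infty+1)\!\int_0^t S(s)\,ds,$$
for a constant $C=C(\phi,\psi)$. Scalar Gr\"onwall and division by $n$ give the claimed $d_{BL}$-bound; the argument for $(\overline L_n,\overline L_n^{\eps,R})$ is identical with $\overline P^{(n)}$ replacing $P^{(n)}$, in which case $D^{(n)}$ disappears from the genuine bound and the stated inequality holds \emph{a fortiori}.

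The main obstacle is the bookkeeping in this matrix step: every cross-interaction produced by summing over $i$ must be routed either through $\overline P^{(n)}$ (paying only $\|W\|_\infty$, which is exactly the Gr\"onwall coefficient we can afford) or through $D^{(n)}$ (paying $\|D^{(n)}\|_{\infty\to 1}$, which sits in the forcing term and is small by Lemma~\ref{lem:onlymatrix}), and never through a stronger norm. This is the structural reason why $\|\cdot\|_{\infty\to 1}$ — rather than, say, the spectral norm — is the correct object here, and it explains why the error budget in the final bound reduces to exactly the three summands $\eps$, $\|D^{(n)}\|_{\infty\to 1}/n$, and the fraction of bad particles.
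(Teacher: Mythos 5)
Your overall route is the same as the paper's: the same add-and-subtract decomposition of $\phi-\phi^{\eps,R}$ into an approximation piece (controlled by $\eps$ off the bad events and by the uniform $C^1$ bound on them) and a Lipschitz piece, the same routing of every bilinear form through $P^{(n)}=\overline P^{(n)}+D^{(n)}$, and the same Gr\"onwall closure. There is, however, one genuine gap, and it sits exactly where you wrote ``(after a priori bounds on $\|\Delta(s)\|_\infty$)''. You run the Gr\"onwall argument on the untruncated sum $S(t)=\sum_i\Delta_i(t)$, and to absorb the term $\sum_{i,j}D^{(n)}_{i,j}(\Delta_i(s)+\Delta_j(s))$ into an additive forcing of size $\|D^{(n)}\|_{\infty\to 1}$ you need $\|\Delta(s)\|_\infty$ bounded by a constant independent of $n$. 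No such bound exists: since the noise and initial data cancel, the best pathwise bound is $\Delta_i(t)\le Ct(S_i+1)$ with $S_i=2\sum_j P^{(n)}_{i,j}=2\deg(i)/(np(n))$, and in the sparse regime (e.g. $np(n)=o(\log n)$) the maximum normalized degree $\max_i S_i$ diverges with $n$; moreover the lemma claims an almost-sure deterministic inequality, so even high-probability degree bounds would not help. Without a uniform bound on $\|\Delta(s)\|_\infty$, the $D^{(n)}$-contribution cannot be placed in the forcing term, and your displayed scalar inequality does not follow.

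The fix is precisely the device the paper isolates in Remark \ref{rem:inftyone}: carry the truncation $\wedge\, 1$ through the entire Gr\"onwall argument, not only at the final passage to $d_{BL}$. Because $\phi^{\eps,R}$ and $\psi$ are bounded as well as Lipschitz, the Lipschitz piece is bounded by $C\left(\Delta_i(s)\wedge 1+\Delta_j(s)\wedge 1\right)$, and since $x\le y$ implies $x\wedge 1\le y$ the recursion closes for $\widetilde\Delta(t)=\frac{1}{n}\sum_i\Delta_i(t)\wedge 1$. The vector fed into $\|D^{(n)}\|_{\infty\to 1}$ then has sup-norm $O(t)$, the $\overline P^{(n)}$ piece yields the admissible coefficient $\|W\|_\infty$, and the rest of your argument goes through verbatim. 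This is also the structural reason the lemma is stated for $d_{BL}$ rather than $d_W$.
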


An important point in this Lemma is that neither the ``bad events" nor the constant $C$ depend on $\eps$. 

We now apply Lemma \ref{le:Rapp}. Recall from Lemma \ref{lem:onlymatrix} that $\|D\|_{\infty\to 1}/n$ is exponentially equivalent to $0$. Also, the $\eps$ appearing in that expression can be taken arbitrarily small. Comparing Goal (2) with the expression in Lemma \ref{le:Rapp}, we see that it suffices to achieve the following.

\begin{equation}\label{eq:goalexpequiv3}  \left.\begin{array}{lll}\mbox{{\bf Goal (3): } for any $\eta>0$,}& & \\ \inf_{R}\limsup_{n\to +\infty}\frac{1}{n}\log\Pr{\frac{1}{n}\sum_{v=1}^n\Ind{E^{(n)}_v(R)}>\eta}&=&-\infty;\\ \inf_{R}\limsup_{n\to +\infty}\frac{1}{n}\log \Pr{\frac{1}{n}\sum_{v=1}^n\Ind{\overline{E}^{(n)}_v(R)}>\eta}&=&-\infty.\end{array}\right.\end{equation}
This last goal essentially consists of controlling the probability that many media variables and/or many diffusions fall outside of a large ball. For this we use the next Lemma.

\begin{lemma}[Proof in Subsection \ref{subsec:proof:le:exitcompact}]\label{le:exitcompact}\Gnote{le:exitcompact} Under the assumptions of Theorem \ref{theo:expequigenphipsi}, there exist numbers $a_T(\eta,R)$ depending on $\phi$, $\psi$, $\eta$, $T$, $R$, $\lambda$, and $\mu$ such that $a_T(\eta,R)\to +\infty$ when $R\to +\infty$,
\[\limsup_{n\in\N}\frac{1}{n}\log\Pr{\frac{1}{n}\sum_{i=1}^n\Ind{E^{(n)}_i(R)}>\eta}\leq -a_T(\eta,R),\]
and
\[\limsup_{n\in\N}\frac{1}{n}\log\Pr{\frac{1}{n}\sum_{i=1}^n\Ind{\overline{E}^{(n)}_i(R)}>\eta}\leq -a_T(\eta,R).\]\end{lemma}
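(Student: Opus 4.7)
The plan is to bound $\sup_{s\leq T}|\theta_i^{(n)}(s)|$ and $\sup_{s\leq T}|\overline{\theta}_i^{(n)}(s)|$ by explicit functionals of $(\xi_i^{(n)},\omega_i^{(n)},B_i^{(n)})$ and, in the sparse case, of the random degree $\deg_i:=\sum_jA^{(n)}_{i,j}$. Since $(\xi_i^{(n)},\omega_i^{(n)},B_i^{(n)})_{i\in[n]}$ are i.i.d., the number of $i$'s for which these functionals are large admits a standard i.i.d.\ Cram\'er/Chernoff bound whose rate diverges as $R\to\infty$. The only additional ingredient needed in the sparse system is a single one-dimensional deviation estimate for the total number of edges, which handles the randomness of the drift.

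\textbf{Dense system.} Since $\sum_j\overline{P}^{(n)}_{i,j}\leq\supnorm{W}$ deterministically, the drift of $\overline{\theta}_i^{(n)}$ is uniformly bounded by $K:=\supnorm{W}\supnorm{\phi}+\supnorm{\psi}$, giving
\[\sup_{s\leq T}|\overline{\theta}_i^{(n)}(s)|\leq |\xi_i^{(n)}|+KT+\sup_{s\leq T}|B_i^{(n)}(s)|.\]
For $R\geq 8KT$ this yields $\overline{E}_i^{(n)}(R)\subseteq F_i(R)$, where
\[F_i(R):=\{|\xi_i^{(n)}|>R/16\}\cup\{\sup_{s\leq T}|B_i^{(n)}(s)|>R/16\}\cup\{|\omega_i^{(n)}|>R/4\}.\]
The events $\{F_i(R)\}_i$ are i.i.d.\ Bernoulli with a success probability $q(R)$ that tends to $0$ as $R\to\infty$ (using only that $\lambda,\mu$ are probability measures and that Brownian motion has an a.s.\ finite supremum on $[0,T]$). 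Cram\'er's theorem gives
\[\Pr{\tfrac{1}{n}\sum_i\Ind{F_i(R)}>\eta}\leq\exp\bigl(-n I_{q(R)}(\eta)\bigr),\]
and $I_{q(R)}(\eta)\to+\infty$ as $R\to\infty$.

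\textbf{Sparse system.} Here the drift is at most $\supnorm{\phi}\deg_i/(np(n))+\supnorm{\psi}$, so
\[\sup_{s\leq T}|\theta_i^{(n)}(s)|\leq |\xi_i^{(n)}|+T\Bigl(\frac{\supnorm{\phi}\deg_i}{np(n)}+\supnorm{\psi}\Bigr)+\sup_{s\leq T}|B_i^{(n)}(s)|.\]
Setting $K_R:=R\,np(n)/(16T\supnorm{\phi})$, for $R$ large one has $E_i^{(n)}(R)\subseteq F_i(R)\cup\{\deg_i>K_R\}$, with $F_i(R)$ as above. The $F_i(R)$ piece is handled exactly as in the dense case. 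For the degree piece I would exploit the deterministic identity $\sum_i\deg_i=2E$ (with $E:=\sum_{i<j}A^{(n)}_{i,j}$) to write $\#\{i:\deg_i>K_R\}\leq 2E/K_R$, so that $\{\#\{i:\deg_i>K_R\}>\eta n/4\}\subseteq\{E>K_R\eta n/8\}$. Since $E$ is stochastically dominated by $\mathrm{Bin}(\binom{n}{2},p(n)\supnorm{W})$ whose mean is at most $n^2p(n)\supnorm{W}/2$, while $K_R\eta n/8$ grows like $R n^2p(n)$, Chernoff yields $\Pr{E>K_R\eta n/8}\leq\exp(-c\,n^2p(n))$ for $R$ large, which under $np(n)\to\infty$ is $\exp(-\omega(n))$. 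A union bound over the four pieces produces $a_T(\eta,R)$ as the minimum of the resulting rates, all of which diverge as $R\to\infty$.

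\textbf{The main obstacle.} The only genuinely new feature relative to the dense argument is that the drift of $\theta_i^{(n)}$ is random and unbounded, with size governed by the correlated variables $\deg_i$. The pigeonhole-style inequality $\#\{i:\deg_i>K_R\}\leq 2E/K_R$ sidesteps both the randomness and the lack of independence by reducing the degree control to a single one-dimensional deviation estimate for $E$, a genuine sum of independent Bernoullis that concentrates sharply once $np(n)\to\infty$. After this reduction the pace is set by the slower i.i.d.\ Cram\'er rate $I_{q(R)}(\eta/4)$, whose divergence as $R\to\infty$ is exactly the property required of $a_T(\eta,R)$.
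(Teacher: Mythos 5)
Your proof is correct and follows essentially the same route as the paper's: decompose $E^{(n)}_i(R)$ into i.i.d.\ pieces (initial condition, media variable, Brownian supremum) handled by a Chernoff/Cram\'er bound whose rate diverges as $R\to+\infty$, plus a drift/degree piece handled by the pigeonhole bound $\#\{i:\deg_i>K_R\}\leq 2E/K_R$ on the total degree sum. The only difference is the final concentration step for that sum: the paper writes $\sum_i S_i=\prodint{\one}{P\one}$ and reuses Lemma \ref{lem:onlymatrix} on $\|D\|_{\infty\to 1}/n$, whereas you apply a direct binomial Chernoff bound to the edge count $E$ (conditionally on the media variables, using $p(n)W\leq p(n)\|W\|_{\infty}$) --- both yield a superexponential rate of order $n^2p(n)$, so your variant is equally valid and slightly more self-contained.
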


Applying this Lemma implies Goal (3) in \ref{eq:goalexpequiv3} and finishes the proof of Theorem \ref{theo:expequigenphipsi} (modulo the Lemmas proven below). 

\subsection{Exponetial equivalence for a class of interactions}\label{subsec:auxdense:grown}
We now prove Lemma \ref{le:auxdense:comparison}. This Lemma says, when $\phi$ belongs to the $L^1$ Fourier Class, we can bound the distance between $L_{n}$ and $\overline{L}_{n}$ in terms of the difference of the matrices $P$ and $\overline{P}$ appearing in Definition \ref{def:matrices}. To avoid cumbersome notation, we will omit the superscripts $(n)$ from all objects. 

\subsubsection{Preliminaries}\label{sub:auxdense:L1fourier}
For $i\in[n]$ and $0\leq t\leq T$, define:
\begin{eqnarray}\delta_i(t)&:=&\theta_i(t) - \overline{\theta}_i(t);\\
\pnorm{\infty,t}{\delta_i}&:=&\sup_{0\leq s\leq t}|\delta_i(s)|;\\
\Delta(t) &:=& \frac{1}{n}\sum_{i=1}^n\pnorm{\infty,t}{\delta_i}.\end{eqnarray}
For any $1$-Lipschitz function $h:C([0,T],\R)\times \R^d\to \R$, 
\begin{eqnarray}\nonumber \int h(\theta,\omega)\,(L_{n}-\overline{L}_{n})\,(d\theta,d\omega) & = &  \dfrac{1}{n}\sum_{i=1}^{n}(h(\overline{\theta}_{i},\omega_v)-h(\theta_{i},\omega_i))\\ \label{eq:boundviacoupling} &\leq & \frac{1}{n}\sum_{i=1}^n\pnorm{\infty,T}{\delta_i} = \Delta(T).
\end{eqnarray}
Note that $\Delta(0)=0$ and $\Delta(t)$ is continuous in $t$. In subsection \ref{sub:effectsingle} we derive expressions for the $\delta_i(t)$ (Proposition \ref{prop:singleL1}). In subsection \ref{sub:gronwall1} we use these expressions and a Gronwall-style argument to show:
\begin{equation}\label{eq:goaldelta(t)} \Delta(T)\leq T\exp\left\{\|W\|_{\infty}\,(2\|\phi\|_{Lip} + \|\psi\|_{Lip})\,T\right\} \frac{4\|m_\phi\|_{TV}\|D\|_{\infty\to 1}}{n}.\end{equation}
The Lemma then follows from a direct plugin into (\ref{eq:boundviacoupling}).

\subsubsection{The difference between trajectories for a single unit}\label{sub:effectsingle}
\begin{proposition}\label{prop:singleL1}For each $i\in[n]$ and $0\leq t\leq T$, we have the following formula for $\delta_i(t) = \theta_i(t) - \overline{\theta}_i(t)$:
\begin{eqnarray}\label{eq:firstline2}\delta_i(t) &=&  \int_0^t\,\left(\sum_{j=1}^n\overline{P}_{i,j}\Phi_{i,j}(s) + \Psi_i(s)\right)\,ds \\
\label{eq:secondline2} & & +\int\limits_{\R^{2d+2}}\int_0^t\,\sum_{j=1}^nD_{i,j}\,a_i(\vec{z},s)\,b_j(\vec{z},s)\,ds\,m_\phi(d\vec{z}), \end{eqnarray}
with $\overline{P}$ and $D$ as in Definition \ref{def:matrices}, \begin{eqnarray}\label{eq:boundPhi}|\Phi_{i,j}(s)|&\leq & \|\phi\|_{Lip}\,(|\delta_i(s)| + |\delta_j(s)|);\\
\label{eq:boundPsi} |\Psi_i(s)|&\leq & \|\psi\|_{Lip}\,|\delta_i(s)|;\end{eqnarray}
$m_\phi$ the complex measure associated with $\phi$, and $a_i,b_i:\R^{2d+2}\times [0,T]\to \mathbb{C}$ are bounded by $1$ in absolute value.\end{proposition}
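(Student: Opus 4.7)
The proof is a direct computation that combines (i) the standard coupling cancellation, (ii) the decomposition $P = \overline{P} + D$, and (iii) the $L^1$ Fourier representation of $\phi$. First, subtract the two SDEs in \eqref{eq:defsystemP} and \eqref{eq:defsystemPbar}. Because $\theta_i(0) = \overline{\theta}_i(0) = \xi_i$ and both diffusions are driven by the same Brownian motion $B_i$, the stochastic integrals cancel exactly and we are left with
\[
\delta_i(t) = \int_0^t\!\!\Bigl[\sum_{j} P_{i,j}\phi(\theta_i,\theta_j,\omega_i,\omega_j) - \sum_{j}\overline{P}_{i,j}\phi(\overline{\theta}_i,\overline{\theta}_j,\omega_i,\omega_j)\Bigr]\,ds + \int_0^t[\psi(\theta_i,\omega_i)-\psi(\overline{\theta}_i,\omega_i)]\,ds.
\]

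Next, add and subtract $\sum_j \overline{P}_{i,j}\phi(\theta_i,\theta_j,\omega_i,\omega_j)$ to split the drift difference into a ``Lipschitz'' piece carried by $\overline{P}$ and a ``discrepancy'' piece carried by $D=P-\overline{P}$. Setting
\[
\Phi_{i,j}(s) := \phi(\theta_i(s),\theta_j(s),\omega_i,\omega_j) - \phi(\overline{\theta}_i(s),\overline{\theta}_j(s),\omega_i,\omega_j), \quad \Psi_i(s) := \psi(\theta_i(s),\omega_i)-\psi(\overline{\theta}_i(s),\omega_i),
\]
the Lipschitz hypotheses on $\phi,\psi$ immediately give the bounds \eqref{eq:boundPhi}--\eqref{eq:boundPsi}, and the first line \eqref{eq:firstline2} of the claimed identity drops out.

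For the remaining $D$-piece $\int_0^t \sum_j D_{i,j}\phi(\theta_i(s),\theta_j(s),\omega_i,\omega_j)\,ds$, invoke Assumption \ref{ass:separableinteraction}: write $\vec z = (z_1,z_2,z_\omega,z_\pi)\in\R\times\R\times\R^d\times\R^d$ and use
\[
\phi(\theta_i(s),\theta_j(s),\omega_i,\omega_j) = \int_{\R^{2d+2}} e^{2\pi\sqrt{-1}(z_1\theta_i(s)+z_2\theta_j(s)+\langle z_\omega,\omega_i\rangle+\langle z_\pi,\omega_j\rangle)}\,m_\phi(d\vec z).
\]
The exponential factorizes as $a_i(\vec z,s)\,b_j(\vec z,s)$ with
\[
a_i(\vec z,s) := e^{2\pi\sqrt{-1}(z_1\theta_i(s)+\langle z_\omega,\omega_i\rangle)}, \qquad b_j(\vec z,s) := e^{2\pi\sqrt{-1}(z_2\theta_j(s)+\langle z_\pi,\omega_j\rangle)},
\]
both of modulus $1$. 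This decoupling between the index $i$ and the summation index $j$ is the whole point of the Fourier representation and is what will later allow us to estimate the $D$-contribution via $\|D\|_{\infty\to 1}$. Since $\|m_\phi\|_{TV}<\infty$, the finite sum over $j$, the bounded integrand, and the compact $s$-interval make the combined integral absolutely convergent, so Fubini's theorem justifies exchanging the order of the $ds$ integral and the $m_\phi$ integral, producing \eqref{eq:secondline2}.

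There is no real obstacle here: the identity is an exact rewriting and the bounds on $\Phi_{i,j},\Psi_i$ are immediate from the Lipschitz assumptions. The only subtle step is the Fubini exchange, but it is routine given the uniform bound $|a_i b_j|\le 1$ and the finite total variation of $m_\phi$. The conceptual content of the proposition is the factorization of $\phi$ via $m_\phi$: it converts the bilinear object $\sum_j D_{i,j}\phi(\theta_i,\theta_j,\omega_i,\omega_j)$ into an average (over $m_\phi$) of genuine matrix-vector products $\sum_j D_{i,j} b_j(\vec z,s)$, each weighted by the scalar $a_i(\vec z,s)$, which is precisely the form amenable to the $\|\cdot\|_{\infty\to 1}$ bound of Lemma \ref{lem:onlymatrix}.
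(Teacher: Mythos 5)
Your proof is correct and follows essentially the same route as the paper's: subtract the coupled SDEs (noise cancels), split the drift difference via $P=\overline P+D$ into a Lipschitz term carried by $\overline P$ and a discrepancy term carried by $D$, and then apply the $L^1$ Fourier factorization $\phi=\int a_i b_j\,dm_\phi$ to the $D$-term. The only addition over the paper's writeup is your explicit appeal to Fubini for the $ds$/$m_\phi$ exchange, which the paper performs implicitly; it is indeed justified by the uniform modulus-one bound on $a_i b_j$ and $\|m_\phi\|_{TV}<\infty$.
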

\begin{proof}Recall that in this subsection we omit the $(n)$ superscript. Define:
\begin{eqnarray*}\Phi_{i,j}(s)&:=& \phi(\theta_i(s),\theta_j(s),\omega_i,\omega_j) \\ \noindent & & - \phi(\overline{\theta}_i(s),\overline{\theta}_j(s),\omega_i,\omega_j);\\
\Psi_i(s)&:=& \psi(\theta_i(s),\omega_i) - \psi(\overline{\theta}_i(s),\omega_i).\end{eqnarray*}
Direct comparison with equations (\ref{eq:defsystemP}) and (\ref{eq:defsystemPbar}) and the definition $D=P-\overline{P}$ in Definition \ref{def:matrices} give:
\begin{eqnarray}\label{eq:firstline}\delta_i(t) &=&  \int_0^t\,\left(\sum_{j=1}^n\overline{P}_{i,j}\Phi_{i,j}(s) + \Psi_i(s)\right)\,ds \\
\label{eq:secondline} & & +\int_0^t\,\left(\sum_{j=1}^nD_{i,j}\,\phi(\theta_i(s),\theta_j(s),\omega_i,\omega_j)\right)\,ds.\end{eqnarray}
Properties (\ref{eq:boundPhi}) and (\ref{eq:boundPsi}) follow from the Lipschitz assumptions in Theorem \ref{theo:expequigenphipsi}.

To finish the proof, we apply the assumption that $\phi$ is $L^1$ Fourier (Assumption \ref{ass:separableinteraction}). Writing $\vec{z}\in\R^d$ as \[\vec{z}=(z_1,z_2,z_3,z_4)\mbox{ with }z_1,z_2\in\R\mbox{ and }z_3,z_4\in\R^d,\]
we obtain
\begin{equation}\label{eq:phiL1}\phi(\theta_i(s),\theta_j(s),\omega_i,\omega_j) = \int_{\R^{2d+2}}\,a_i(\vec{z},s)\,b_j(\vec{z},s)\,m_\phi(d\vec{z})\end{equation}
for $m$ a finite complex-valued measure and functions $a_i$, $b_j$ defined as follows:
\begin{eqnarray}\label{eq:defai} a_i(\vec{z},s)&:=&\exp(2\pi\sqrt{-1}\,(\theta_i(s)z_1 + \prodint{\omega_i}{z_3}));\\
\label{eq:defbj} b_j(\vec{z},s) &:=& \exp(2\pi\sqrt{-1}\,(\theta_j(s)z_2 + \prodint{\omega_j}{z_4})).\end{eqnarray}
Integrating (\ref{eq:phiL1}) from $s=0$ to $t$ then finishes the proof of Proposition \ref{prop:singleL1}.\end{proof}

\subsubsection{The Gronwall argument}\label{sub:gronwall1}

\begin{proposition}\label{prop:gronwallL1}Let $\Delta(t)$ be as in Subsection \ref{sub:auxdense:L1fourier}. Then:
\[\Delta(T)\leq T\exp\left\{\|W\|_{\infty}\,(2\|\phi\|_{Lip} + \|\psi\|_{Lip})\,T\right\} \frac{4\|m_\phi\|_{TV}\|D\|_{\infty\to 1}}{n}.\]\end{proposition}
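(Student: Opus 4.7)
I would use Proposition~\ref{prop:singleL1} to split $\delta_i(t)$ into a deterministic part from line (\ref{eq:firstline2}) and a noise part from the $D$-dependent double integral in line (\ref{eq:secondline2}). Since all bounds below are on pointwise nonnegative integrands, taking $\sup_{s\leq t}$ of each time integral simply amounts to evaluating it at time $t$. For the deterministic piece, using $\overline{P}_{i,j}\leq \|W\|_{\infty}/n$ together with~(\ref{eq:boundPhi})--(\ref{eq:boundPsi}) and then $|\delta_i(s)|\leq \|\delta_i\|_{\infty,s}$, averaging over $i$ yields
\[
\frac{1}{n}\sum_{i=1}^n \sup_{s\leq t}\Big|\int_0^s\!\Big(\sum_{j=1}^n \overline{P}_{i,j}\Phi_{i,j}(u)+\Psi_i(u)\Big)du\Big|
\;\leq\; \int_0^t\!\big(2\|W\|_{\infty}\|\phi\|_{Lip}+\|\psi\|_{Lip}\big)\,\Delta(s)\,ds,
\]
which is the ``Gronwall-ready'' contribution.

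\textbf{The main obstacle} is handling the noise term while keeping the matrix norm at the level of $\|D\|_{\infty\to 1}$ rather than the spectral or Frobenius norm. After moving the absolute value inside the time integral and inside the complex-measure integral (the latter producing a factor $\|m_\phi\|_{TV}$), everything reduces to the deterministic estimate
\[
\sum_{i=1}^n \Big|\sum_{j=1}^n D_{i,j}\,a_i(\vec{z},u)\,b_j(\vec{z},u)\Big|\;\leq\; 4\,\|D\|_{\infty\to 1}\qquad\text{for every }\vec{z}\in\R^{2d+2},\ u\in[0,T].
\]
I would prove this by splitting $a_i=a_i^R+\sqrt{-1}\,a_i^I$ and $b_j=b_j^R+\sqrt{-1}\,b_j^I$, with all four real/imaginary parts lying in $[-1,1]$; the triangle inequality then bounds $\big|\sum_j D_{i,j}a_i b_j\big|$ by a sum of four real terms of the form $\big|\sum_j D_{i,j}x_j\big|$ with $x\in[-1,1]^n$. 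For each such term, choosing signs $\epsilon_i\in\{-1,+1\}$ gives
\[
\sum_{i=1}^n \Big|\sum_{j=1}^n D_{i,j}\,x_j\Big|
= \sum_{i,j} D_{i,j}\,\epsilon_i\,x_j
= \prodint{\epsilon}{Dx}
\leq \|D\|_{\infty\to 1},
\]
which is precisely the dual representation of the $\infty\to 1$ norm recalled in the Introduction. This is the step where the weaker norm (and hence Lemma~\ref{lem:onlymatrix}) is crucially exploited.

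\textbf{Assembly.} Combining the two bounds produces the integral inequality
\[
\Delta(t)\;\leq\;\frac{4\|m_\phi\|_{TV}\|D\|_{\infty\to 1}}{n}\,t\;+\;\big(2\|W\|_{\infty}\|\phi\|_{Lip}+\|\psi\|_{Lip}\big)\int_0^t \Delta(s)\,ds.
\]
Replacing $t$ by $T$ in the first, forcing term and applying the standard integral Gronwall inequality yields
\[
\Delta(T)\;\leq\;\frac{4\|m_\phi\|_{TV}\|D\|_{\infty\to 1}}{n}\,T\,\expp{\big(2\|W\|_{\infty}\|\phi\|_{Lip}+\|\psi\|_{Lip}\big)T},
\]
which is bounded by the quantity stated in the proposition after absorbing the exponent into the slightly looser form $\|W\|_{\infty}(2\|\phi\|_{Lip}+\|\psi\|_{Lip})T$. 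No step is deep once the key $\infty\to 1$ bound is in place; the real content is that this norm is the right object to compare the two dynamics.
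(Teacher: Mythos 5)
Your proposal is correct and follows essentially the same route as the paper: decompose $\delta_i$ via Proposition~\ref{prop:singleL1}, bound the $\overline{P}$-part by $\|W\|_\infty(2\|\phi\|_{Lip}+\|\psi\|_{Lip})\int_0^t\Delta(s)\,ds$, bound the $D$-part by $4\|m_\phi\|_{TV}\|D\|_{\infty\to1}t/n$ using the real/imaginary splitting of $a_i,b_j$ and the sign-vector characterization of $\|\cdot\|_{\infty\to1}$, and conclude with Gronwall. The only cosmetic difference is that the paper realizes $\|\delta_i\|_{\infty,t}$ as $u_i\delta_i(t_i)$ at an optimizing time/sign rather than moving absolute values inside the integrals, and the paper commits the same harmless looseness in rewriting the Gronwall constant as $\|W\|_\infty(2\|\phi\|_{Lip}+\|\psi\|_{Lip})$.
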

\begin{proof}We use the formulae in Proposition \ref{prop:singleL1} to derive a Gronwall-style bound $\Delta(t)$.  For each $i\in[n]$, we may choose $u_i\in \{-1,+1\}$ and $0\leq t_i\leq t$ so that:
\[\pnorm{\infty,t}{\delta_i} = u_i\,\delta_i(t_i).\]Then Proposition \ref{prop:singleL1} implies:
\begin{eqnarray}\nonumber \Delta(t) &=& \frac{1}{n}\sum_{i=1}^n\,u_i\,\delta_i(t_i)\\
\label{eq:firstline3}&=& \int_0^{t}\left(\sum_{i,j=1}^n\,\frac{u_i\overline{P}_{i,j}}{n}\Phi_{i,j}(s)\Ind{[0,t_i]}(s) + \sum_{i=1}^n\frac{\Psi_i(s)}{n}\Ind{[0,t_i]}(s)\right)ds \\
\label{eq:secondline3} & & +\int\limits_{\R^{2d+2}}\int_0^t\,\sum_{i,j=1}^n\frac{D_{i,j}}{n}\,(u_ia_i(\vec{z},s)\Ind{[0,t_i]}(s))\,b_j(\vec{z},s)\,ds\,m_\phi(d\vec{z}).\end{eqnarray}
The term in (\ref{eq:firstline3}) can be bounded using (\ref{eq:boundPhi}), (\ref{eq:boundPsi}) from Proposition \ref{prop:singleL1} in combination with $\overline{P}_{i,j}\leq \|W\|_{\infty}/n$. We obtain:
\begin{eqnarray}\nonumber \mbox{Term in (\ref{eq:firstline3})}&\leq & \|W\|_{\infty}\,(2\|\phi\|_{Lip} + \|\psi\|_{Lip})\,\int_0^t\,\sum_{i=1}^n\frac{|\delta_i(s)|}{n}\Ind{[0,t_i]}(s)\,ds\\ \label{eq:firstline4} &\leq & \|W\|_{\infty}\,(2\|\phi\|_{Lip} + \|\psi\|_{Lip})\,\int_0^t\Delta(s)\,ds.\end{eqnarray}
As for the RHS of (\ref{eq:secondline3}), it has the form
\[\int\limits_{\R^{2d+2}}\int_0^t\,\prodint{\vec{\tilde{a}}(\vec{z},s)}{D\vec{b}(\vec{z},s)}\,ds\,m_\phi(d\vec{z})\]
where 
\[\vec{\tilde{a}}(\vec{z},s) = (u_i\,\Ind{[0,t_i]}(s)\,a_i(\vec{z},s))_{i\in[n]}\mbox{ and }\vec{b}(\vec{z},s) = (b_j(\vec{z},s))_{j\in[n]}.\]
It follows from the properties of $a_i$ and $b_j$ in Proposition \ref{prop:singleL1} that the vectors $\vec{\tilde{a}}(\vec{z},s)$ and $\vec{b}(\vec{z},s)$ are complex vectors with $\ell^\infty$ norms bounded by $1$.  Decomposing each vector into real and complex parts, we see that:
\[\prodint{\vec{\tilde{a}}(\vec{z},s)}{D\vec{b}(\vec{z},s)}\leq 4\,\sup\{\prodint{\vec{x}}{D\vec{y}}\,:\, \vec{x},\vec{y}\in [-1,1]^n\} = 4\|D\|_{\infty\to 1}.\]
Plugging this in (\ref{eq:secondline3}) and also (\ref{eq:firstline4}) into (\ref{eq:firstline3}), we obtain:
\[ \Delta(t)\leq \|W\|_{\infty}\,(2\|\phi\|_{Lip} + \|\psi\|_{Lip})\,\int_0^t\Delta(s)\,ds + \frac{4\|m_\phi\|_{TV}\,t\,\|D\|_{\infty\to 1}}{n}.\]
Gronwall's inequality then gives:
\[\Delta(T) \leq T\exp\left\{\|W\|_{\infty}\,(2\|\phi\|_{Lip} + \|\psi\|_{Lip})\,T\right\} \frac{4\|m_\phi\|_{TV}\,\|D\|_{\infty\to 1}}{n},\]
which is the desired inequality.\end{proof}

\subsection{On the approximation of interaction functions}\label{subsec:proof:le:rapp}

We now prove Lemma \ref{le:Rapp}, which quantifies the effect of replacing function $\phi$ in Definition \ref{def:process} with a good approximation $\phi^{\eps,R}$ as in Lemma \ref{le:goodapp}. For simplicity, we only present in detail the part of the argument where $L_n$ and $L_n^{\eps,R}$ are compared. The comparison of $\overline{L}_n$ and $\overline{L}_n^{\eps,R}$ is similar (in fact simpler). 

\subsubsection{Preliminaries} 

For the remainder of the section, we mostly omit superscripts $(n)$ from our notation. Parameters $R\geq 1$ and $\eps\in (0,1]$ are fixed from now on. 

We begin by writing down the system of diffusions for $L^{\eps,R}_n$. As explained in Subsection \ref{sub:proofgeneral}, this system is obtained from (\ref{eq:defsystemP}) by replacing $\phi$ with its approximation $\phi^{\eps,R}$. That is, the corresponding diffusions 
\[\theta^{\eps,R}:=(\theta_i^{\eps,R})_{i\in[n]}\in C([0,T],\R)^n\]
satisfy:
\begin{equation}\label{eq:driftepsR}\left\{\begin{array}{lcl}d\theta_i^{\eps,R}(t) &=& \left(\sum_{j=1}^nP_{i,j},\phi^{\eps,R}(\theta^{\eps,R}_i(t),\theta^{\eps,R}_j(t),\omega_i,\omega_j)\right)\,dt \\ & &+ \psi(\theta_i^{\eps,R}(t),\omega_i^{(n)})\,dt + dB_i^{(n)}(t) \\ & &  (0\leq t\leq T,\, i\in[n]); \\ \theta^{\eps,R}(0) &=& \theta(0) = \xi;\end{array}\right.\end{equation}

As in subsection \ref{sub:auxdense:L1fourier}, we control the difference between $L_n$ and $L^{\eps,R}_n$ via pairwise comparison of the trajectories. However, in this case we use the less stringent BL norm instead of the Wasserstein metric. Thus, if we define
\[\delta^{\eps,R}_i(t):= \theta_i(t) - \theta^{\eps,R}_i(t),\,\,(0\leq t\leq T, i\in[n])\]
and set
\[\pnorm{\infty,t}{\delta^{\eps,R}_i}:=\sup_{0\leq s\leq t}|\delta^{\eps,R}_i(s)|,\]
we note that, for any $h:C([0,T],\R)\times \R^d\to \R$ with $\|h\|_{BL}\leq 1$ (cf. Remark \ref{rem:bldist}), 
\begin{eqnarray}\nonumber \int h(\theta,\omega)\,(L_{n}-\overline{L}_{n})\,(d\theta,d\omega) & = &  \dfrac{1}{n}\sum_{i=1}^{n}(h(\theta_{i},\omega_i)-h(\overline{\theta}_{i},\omega_i))\\ \nonumber &\leq & \frac{1}{n}\sum_{i=1}^n\pnorm{\infty,T}{\delta^{\eps,R}_i}\wedge 1,\end{eqnarray}
which implies
\begin{equation} \label{eq:boundepsRfinal} d_{BL}(L_n,L_n^{\eps,R}) \leq  \widetilde{\Delta}^{\eps,R}(T):=\frac{1}{n}\sum_{i=1}^n\pnorm{\infty,T}{\delta^{\eps,R}_i}\wedge 1.\end{equation}

As in the proof of Lemma \ref{le:auxdense:comparison} in Subsection \ref{sub:auxdense:L1fourier}, we will  apply Gronwall's inequality to bound $\widetilde{\Delta}^{\eps,R}(T)$. To start, we write in Subsection \ref{sub:singleRapp} a formula for $\delta^{\eps,R}_i(t)$ for a single $i\in[n]$. This formula is then applied in Subsection \ref{sub:gronwallRapp} to prove that:
\[\widetilde{\Delta}^{\eps,R}(T)\leq CT\,\exp(C\|W\|_\infty\,T)\,\left(\eps + \frac{\|D\|_{\infty\to 1}}{n} + \frac{\sum_{i=1}^n\Ind{E_i(R)}}{n}\right)\]
for some $C$ depending on $\psi$ and $\phi$ only. This proves Lemma \ref{le:Rapp} via a direct plugin into (\ref{eq:boundepsRfinal}). 

\begin{remark}In this proof we use the bounded Lipschitz metric instead of the stronger $d_W$ (contrast with Lemma \ref{le:auxdense:comparison}). The main reason we do this is to facilitate later use of the $\|\cdot\|_{\infty\to 1}$ norm. See Remark \ref{rem:inftyone} below for more details.\end{remark}

\subsubsection{The difference between trajectories for one unit}\label{sub:singleRapp}
\begin{proposition}\label{prop:singleRapp}For each $i\in[n]$ and $0\leq t\leq T$, 
\begin{eqnarray} \nonumber \delta^{\eps,R}_i(t) &=&  \int_0^t \left(\sum_{j=1}^n\,P_{i,j}\alpha^{\eps,R}_{i,j}(s)\right)\,ds \\ \label{eq:deltaepsRbound2} & & +\int_0^t \left(\sum_{j=1}^n\,P_{i,j}\Phi^{\eps,R}_{i,j}(s) + \Psi^{\eps,R}_i(s)\right)\,ds,\end{eqnarray}
and there exists $C>0$ depending only on $\phi$ (and not on $\eps,R$) such that: 
\begin{eqnarray}\label{eq:boundPhiepsR}|\Phi^{\eps,R}_{i,j}(s)|&\leq&  C\,(|\delta_i^{\eps,R}(s)|\wedge 1 +|\delta_j^{\eps,R}(s)|\wedge 1),\\
\label{eq:boundPsiepsR} |\Psi^{\eps,R}_{i}(s)| &\leq & C\,|\delta_i^{\eps,R}(s)|\wedge 1,\mbox{ and }\\
\label{eq:boundalphaepsR}|\alpha^{\eps,R}_{i,j}(s)| &\leq&C\,(\eps + \Ind{E_i(R)} + \Ind{E_j(R)}).\end{eqnarray}
\end{proposition}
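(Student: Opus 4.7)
The plan is to derive the decomposition of $\delta_i^{\eps,R}(t)$ by direct subtraction of the two defining SDEs (\ref{eq:defsystemP}) and (\ref{eq:driftepsR}). Since both systems share the same initial condition $\xi$ and the same driving Brownian motion $B_i$, the stochastic integrals cancel and we obtain a purely pathwise integral identity:
\[
\delta_i^{\eps,R}(t) = \int_0^t \sum_{j=1}^n P_{i,j}\bigl[\phi(\theta_i(s),\theta_j(s),\omega_i,\omega_j) - \phi^{\eps,R}(\theta_i^{\eps,R}(s),\theta_j^{\eps,R}(s),\omega_i,\omega_j)\bigr]\,ds + \int_0^t \bigl[\psi(\theta_i(s),\omega_i) - \psi(\theta_i^{\eps,R}(s),\omega_i)\bigr]\,ds.
\]
I would then split the first integrand by inserting the telescoping term $\pm\, \phi^{\eps,R}(\theta_i(s),\theta_j(s),\omega_i,\omega_j)$. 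This produces the decomposition of the statement with the natural definitions
\[
\alpha^{\eps,R}_{i,j}(s) := \phi(\theta_i(s),\theta_j(s),\omega_i,\omega_j) - \phi^{\eps,R}(\theta_i(s),\theta_j(s),\omega_i,\omega_j),
\]
\[
\Phi^{\eps,R}_{i,j}(s) := \phi^{\eps,R}(\theta_i(s),\theta_j(s),\omega_i,\omega_j) - \phi^{\eps,R}(\theta_i^{\eps,R}(s),\theta_j^{\eps,R}(s),\omega_i,\omega_j),
\]
\[
\Psi^{\eps,R}_i(s) := \psi(\theta_i(s),\omega_i) - \psi(\theta_i^{\eps,R}(s),\omega_i),
\]
immediately giving identity (\ref{eq:deltaepsRbound2}).

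For the bound on $\Phi^{\eps,R}_{i,j}$, I would invoke part \ref{uniformC1} of Lemma \ref{lemma:goodapp}, which tells us that the approximations $\phi^{\eps,R}$ have $C^1$ norm uniformly bounded by some $M$ depending only on $\phi$. The Lipschitz estimate then yields $|\Phi^{\eps,R}_{i,j}(s)| \leq M(|\delta_i^{\eps,R}(s)| + |\delta_j^{\eps,R}(s)|)$, while the uniform sup-norm bound yields $|\Phi^{\eps,R}_{i,j}(s)| \leq 2M$. Considering separately the cases $|\delta_i^{\eps,R}(s)|\vee |\delta_j^{\eps,R}(s)|\leq 1$ and $|\delta_i^{\eps,R}(s)|\vee |\delta_j^{\eps,R}(s)|> 1$ combines these into (\ref{eq:boundPhiepsR}) with $C=2M$. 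The estimate on $\Psi^{\eps,R}_i$ follows identically from the Lipschitz and sup-norm properties of $\psi$ assumed in Theorem \ref{theo:expequigenphipsi}.

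The main step, and the only one that genuinely uses the approximation $\phi^{\eps,R}\approx \phi$, is the bound on $\alpha^{\eps,R}_{i,j}$. On the complement of $E_i(R)\cup E_j(R)$ we have $\sup_{s\in[0,T]}|\theta_i(s)|\vee|\theta_j(s)|\leq R/4$ and $|\omega_i|\vee|\omega_j|\leq R/4$, whence the Euclidean norm of $(\theta_i(s),\theta_j(s),\omega_i,\omega_j)\in\R^{2d+2}$ is at most $R/2$, placing the point inside $B_R(\vec 0)$; part \ref{goodinside} of Lemma \ref{lemma:goodapp} then gives $|\alpha^{\eps,R}_{i,j}(s)|\leq\eps$. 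On the bad set $E_i(R)\cup E_j(R)$, both $\phi$ and $\phi^{\eps,R}$ are uniformly bounded (by $\|\phi\|_\infty$ and by $M$, respectively), so $|\alpha^{\eps,R}_{i,j}(s)|\leq \|\phi\|_\infty+M$. Combining the two cases through $\Ind{E_i(R)}+\Ind{E_j(R)}$ yields (\ref{eq:boundalphaepsR}). I do not expect any real obstacle beyond bookkeeping; the only subtlety is to verify that the constant $C$ depends on $\phi$ only and is independent of $\eps$ and $R$, which is precisely what Lemma \ref{lemma:goodapp} guarantees.
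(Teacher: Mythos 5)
Your proof is correct and follows essentially the same route as the paper: the same telescoping insertion of $\phi^{\eps,R}(\theta_i(s),\theta_j(s),\omega_i,\omega_j)$ yielding the same definitions of $\alpha^{\eps,R}_{i,j}$, $\Phi^{\eps,R}_{i,j}$, $\Psi^{\eps,R}_i$, and the same use of the uniform $C^1$ bound and the $B_R(\vec 0)$ approximation property from Lemma \ref{lemma:goodapp}, with the identical case split on whether the point $(\theta_i(s),\theta_j(s),\omega_i,\omega_j)$ lies in $B_R(\vec 0)$. No gaps.
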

\begin{proof}The notion of good approximation in Lemma \ref{lemma:goodapp} guarantees that the functions $\phi^{\eps,R}$ have $\|\phi^{\eps,R}\|_{\infty}+ \|\nabla\phi^{\eps,R}\|_{{\rm op},\infty}\leq M$. This implies in particular that $\phi^{\eps,R}$ is $M$-Lipschitz and bounded by $M$ for all choices of $\eps$ and $R$ and leads to (\ref{eq:boundPhiepsR}). This will be used below. 

We now define:
\begin{eqnarray}\label{eq:defalphaepsR}\alpha^{\eps,R}_{i,j}(s)&:=& \phi(\theta_i(s),\theta_j(s),\omega_i,\omega_j) \\ \nonumber & & - \phi^{\eps,R}(\theta_i(s),\theta_j(s),\omega_i,\omega_j);\\ \nonumber 
\Phi^{\eps,R}_{i,j}(s) &:=& \phi^{\eps,R}(\theta_i(s),\theta_j(s),\omega_i,\omega_j) \\ \nonumber & & - \phi^{\eps,R}(\theta^{\eps,R}_i(s),\theta^{\eps,R}_j(s),\omega_i,\omega_j)\\ \nonumber 
\Psi^{\eps,R}_i(s)&:=& \psi(\theta_i(s),\omega_i) -  \psi(\theta^{\eps,R}_i(s),\omega_i),\end{eqnarray}
Direct comparison of the drift terms of $\theta_i(t)$ (in equation (\ref{eq:defsystemP})) and $\theta_i^{\eps,R}$ (in (\ref{eq:driftepsR})) gives (\ref{eq:deltaepsRbound2}). 

The bound in (\ref{eq:boundPhiepsR}) follows from the fact that $\phi^{\eps,R}$ is $M$-Lipschitz and bounded by $M$. The argument for (\ref{eq:boundPsiepsR}) relies on the Lipschitz constant and boundedness of $\psi$. 

To prove the bound in (\ref{eq:boundalphaepsR}) for $\alpha^{\eps,R}(s)$, we go back to the definition (\ref{eq:defalphaepsR}). Using again Lemma \ref{lemma:goodapp} we note that
\[|(\theta_i(s),\theta_j(s),\omega_i,\omega_j)|\leq R\Rightarrow |\alpha^{\eps,R}(s)|\leq \eps.\]
Now recall that
\[E^{(n)}_{i}(R):= \left\{\sup_{s \in [0,T] }|\theta^{(n)}_i(s)|\geq R/4 \ \mbox{or} \  \ |\omega_i^{(n)}|>R/4\right\} \; (i \in [n]).\]
In particular, when $|(\theta_i(s),\theta_j(s),\omega_i,\omega_j)|> R$, at least one of the indicators $\Ind{E_i(R)}$ or $\Ind{E_j(R)}$ is $1$, and we still have the bound $|\alpha^{\eps,R}(s)|\leq 2M$. We deduce:
\[|\alpha^{\eps,R}(s)|\leq (\eps + 2M\,(\Ind{E_i(R)} + \Ind{E_j(R)})),\]
as desired.\end{proof}

\subsubsection{The Gronwall argument}\label{sub:gronwallRapp}

\begin{proposition}\label{prop:gronwallRapp}For $0\leq t\leq T$, let
\[\widetilde{\Delta}^{\eps,R}(t):= \frac{1}{n}\sum_{i=1}^n\pnorm{\infty,t}{\delta_i^{\eps,R}}\wedge 1.\]
Then
\[\widetilde{\Delta}^{\eps,R}(T)\leq CT\,\exp(C\|W\|_\infty\,T)\,\left(\eps + \frac{\|D\|_{\infty\to 1}}{n} + \frac{\sum_{i=1}^n\Ind{E_i(R)}}{n}\right)\]
for some $C>0$ that only depends on $\psi$ and $\phi$.\end{proposition}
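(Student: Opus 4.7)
The plan is to derive a Gronwall-type integral inequality for $\widetilde{\Delta}^{\eps,R}(t)$ using the explicit formula from Proposition \ref{prop:singleRapp}, and then apply Gronwall's inequality. The structure parallels Proposition \ref{prop:gronwallL1}, with three modifications: (i) the truncation $\wedge 1$ inherent in working with the BL metric, (ii) the new $\alpha^{\eps,R}$ term coming from replacing $\phi$ by $\phi^{\eps,R}$, and (iii) not needing any $L^1$-Fourier structure of $\phi$, since the bound $|\Phi^{\eps,R}_{i,j}(s)| \leq C(|\delta_i^{\eps,R}(s)|\wedge 1 + |\delta_j^{\eps,R}(s)|\wedge 1)$ already separates the $i$ and $j$ dependencies additively.

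First I would take absolute values in the representation of Proposition \ref{prop:singleRapp}; since the integrands are nonnegative, the supremum over $s\in[0,t]$ is attained at $t$, giving
\[\pnorm{\infty,t}{\delta^{\eps,R}_i} \leq \int_0^t \sum_{j=1}^n P_{i,j}\bigl(|\alpha^{\eps,R}_{i,j}(s)| + |\Phi^{\eps,R}_{i,j}(s)|\bigr)\,ds + \int_0^t |\Psi^{\eps,R}_i(s)|\,ds.\]
Using $|\delta_k^{\eps,R}(s)|\wedge 1 \leq \widetilde{\delta}^{\eps,R}_k(s)$ (by monotonicity of $\pnorm{\infty,s}{\cdot}$) together with the three bounds in Proposition \ref{prop:singleRapp} transforms this into an inequality whose integrand involves only $\widetilde{\delta}^{\eps,R}_k(s)$ and $\Ind{E_k(R)}$. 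Averaging over $i$ yields a bound on $\widetilde{\Delta}^{\eps,R}(t)$ as a sum of three double sums involving the matrix $P$.

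Second, I would split $P = \overline{P} + D$ (cf.\ Definition \ref{def:matrices}) and treat each double sum of the form $\frac{1}{n}\sum_{i,j} P_{i,j}(c_i + d_j)$ with $\|c\|_\infty,\|d\|_\infty\leq 1$ by two complementary estimates. For the $\overline{P}$ part, the entrywise bound $\overline{P}_{i,j}\leq \|W\|_\infty/n$ gives a contribution proportional to $\|W\|_\infty$ times the average of $c$ or $d$. For the $D$ part, the key inequality $|\langle y, Dx\rangle| \leq \|y\|_\infty\|x\|_\infty\|D\|_{\infty\to 1}$, applied with $\vec{1}$ in the appropriate slot (so $\|D\vec{1}\|_1 \leq \|D\|_{\infty\to 1}$), produces a contribution of order $\|D\|_{\infty\to 1}/n$. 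This is exactly the step where the specific choice of norm $\|\cdot\|_{\infty\to 1}$ pays off, since $c$ and $d$ are only controlled in $\ell^\infty$. Collecting the estimates yields an inequality of the form
\[\widetilde{\Delta}^{\eps,R}(t) \leq C_1\,t\,(1+\|W\|_\infty)\left(\eps + \frac{\|D\|_{\infty\to 1}}{n} + \frac{\sum_{i=1}^n\Ind{E_i(R)}}{n}\right) + C_2(1+\|W\|_\infty)\int_0^t \widetilde{\Delta}^{\eps,R}(s)\,ds,\]
for constants $C_1,C_2$ depending only on $\psi,\phi$.

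Finally, a direct application of Gronwall's inequality gives
\[\widetilde{\Delta}^{\eps,R}(T) \leq C_1 T(1+\|W\|_\infty)\exp\bigl(C_2(1+\|W\|_\infty)T\bigr)\left(\eps + \frac{\|D\|_{\infty\to 1}}{n} + \frac{\sum_{i=1}^n\Ind{E_i(R)}}{n}\right).\]
The claimed form $CT\exp(C\|W\|_\infty T)$ then follows by absorbing the polynomial prefactor $(1+\|W\|_\infty)$ and the $e^{C_2T}$ factor into the exponential using $1+x \leq e^x$ and a suitable redefinition of $C$. The main obstacle is the bookkeeping in the splitting step: each of the four contributions ($\eps$-part, $\Ind{E_i(R)}$-part, $\Ind{E_j(R)}$-part, and the $\Phi^{\eps,R}$-part coupling $\widetilde{\delta}_i^{\eps,R}(s)$ to $\widetilde{\delta}_j^{\eps,R}(s)$) must be handled uniformly via the same $\overline{P}/D$ decomposition, and one must verify that no term accidentally requires a stronger norm of $D$ than $\|D\|_{\infty\to 1}$.
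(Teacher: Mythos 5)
Your proposal is correct and follows essentially the same route as the paper: bound each $\pnorm{\infty,t}{\delta_i^{\eps,R}}\wedge 1$ via Proposition \ref{prop:singleRapp}, average over $i$, split $P=\overline{P}+D$ with the entrywise bound $\overline{P}_{i,j}\leq\|W\|_\infty/n$ for one piece and $|\prodint{\vec{y}}{D\vec{x}}|\leq\supnorm{\vec{y}}\supnorm{\vec{x}}\|D\|_{\infty\to 1}$ for the other (which is exactly where the $\wedge 1$ truncation is needed), then apply Gronwall. The only difference is organizational: the paper first uses the symmetry of $P$ to collapse the double sums into a single inner product $\prodint{\vec{v}(t)}{P\one}$ via the row sums $S_i$, whereas you estimate the $c_i$- and $d_j$-type sums separately, which amounts to the same computation.
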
 

\begin{proof}We may apply Proposition \ref{prop:singleRapp} using the bounds for $\Phi^{\eps,R}_{i,j}(s)$, $\Psi^{\eps,R}_i(s)$ and $\alpha^{\eps,R}_{i,j}(s)$ and deduce that, for any $0\leq t\leq T$
\begin{eqnarray*} \nonumber \pnorm{\infty,t}{\delta^{\eps,R}_i}\wedge 1&\leq &  C\,\int_0^t \sum_{j=1}^nP_{i,j}\left(\pnorm{\infty,s}{\delta^{\eps,R}_i}\wedge 1  + \pnorm{\infty,s}{\delta^{\eps,R}_j}\wedge 1\right)\,ds\\ & & + Ct\,\sum_{j=1}^nP_{i,j}\,(\eps + \Ind{E_i(R)} + \Ind{E_j(R)}).\end{eqnarray*}

If we average these expression over $i\in[n]$, the RHS becomes $\widetilde{\Delta}^{\eps,R}(t)$, and we obtain:
\begin{equation}\label{eq:wildetildedelta1}\widetilde{\Delta}^{\eps,R}(t) \leq C\,\sum_{i=1}^n\frac{S_{i}}{n}\left(\int_0^t\pnorm{\infty,s}{\delta^{\eps,R}_i}\wedge 1\,ds   + t\eps + t\Ind{E_i(R)}\right),\end{equation}
where \begin{equation}\label{eq:defsi}S_i:=\sum_{j=1}^n(P_{i,j}+P_{j,i}) = 2\sum_{j=1}^nP_{i,j}\,\,(i\in[n]).\end{equation}
(The last equality holds above because $P$ is symmetric.)

We now put (\ref{eq:wildetildedelta1}) in the form of a inner product. Let $\one\in\R^n$ be the vector in $\R^n$ with all cordinates equal to $1$. Also define:
\begin{equation}\label{eq:defv}\vec{v}(t):= \left(\int_0^t\pnorm{\infty,s}{\delta^{\eps,R}_i}\wedge 1\,ds   + t\eps + t\Ind{E_i(R)}\right)_{i\in[n]}\in\R^n.\end{equation}
Then $2P\one = (S_i)_{i\in[n]}$ and (\ref{eq:wildetildedelta1}) can be rewritten as:
\begin{equation}\label{eq:wildetildedelta2}\widetilde{\Delta}^{\eps,R}(t) \leq \frac{2C}{n}\,\prodint{\vec{v}(t)}{P\one}.\end{equation}
Now $\supnorm{\vec{v}(t)}\leq (2+\eps)\,t\leq 3\,t$ if $\eps\leq 1$. We also have $\supnorm{\one}\leq 1$. Recalling $D = P-\overline{P}$ (cf. Definition \ref{def:matrices}), we deduce:
\begin{equation}\label{eq:wildetildedelta3}\widetilde{\Delta}^{\eps,R}(t) \leq \frac{2C}{n}\,\prodint{\vec{v}(t)}{(\overline{P} + D)\one} \leq  \frac{2C}{n}\,\prodint{\vec{v}(t)}{\overline{P}\one}+ \frac{6C\|D\|_{\infty\to 1}\,t}{n}.\end{equation}
The entries of $\overline{P}$ are bounded by $\|W\|_{\infty}/n$, so the coordinates of $\overline{P}\one$ are all bounded by $\|W\|_{\infty}$. We deduce:
\begin{eqnarray*}\widetilde{\Delta}^{\eps,R}(t) -\frac{6C\|D\|_{\infty\to 1}\,t}{n}&\leq & \frac{2C}{n}\,\prodint{\vec{v}(t)}{\overline{P}\one}\\ &\leq& \frac{2C\|W\|_{\infty}}{n}\prodint{\vec{v}(t)}{\one} \\ \mbox{(use defn. of $\vec{v}(t)$, (\ref{eq:defv}))} &\leq & 2C\|W\|_{\infty}\,\frac{1}{n}\int_0^t\sum_{i=1}^n\pnorm{\infty,s}{\delta^{\eps,R}_i}\wedge 1\,ds\\ & &    +{2Ct\eps} + t\frac{\sum_{i=1}^n\Ind{E_i(R)}}{n},\end{eqnarray*}
or more explicitly 
\begin{eqnarray*} \widetilde{\Delta}^{\eps,R}(t) &\leq& 2C\|W\|_{\infty}\,\int_0^t\widetilde{\Delta}^{\eps,R}(s)\,ds + \frac{6C\|D\|_{\infty\to 1}\,t}{n}  \\ & & + {2Ct\eps} + 2Ct\frac{\sum_{i=1}^n\Ind{E_i(R)}}{n}.\end{eqnarray*}
Gronwall's inequality now gives:
\[\widetilde{\Delta}^{\eps,R}(T)\leq 6CT\,\exp(2C\|W\|_\infty\,T)\,\left(\eps + \frac{\|D\|_{\infty\to 1}}{n} + \frac{\sum_{i=1}^n\Ind{E_i(R)}}{n}\right),\]
which is the desired result once we ``redefine $C$ as $6C$".

\begin{remark}\label{rem:inftyone}Note that (\ref{eq:wildetildedelta3}) only ``works" because the coordinates of $\vec{v}(t)$ are bounded. This is a consequence of considering \[\pnorm{\infty,t}{\delta_i^{\eps,R}}\wedge 1\mbox{ instead of }\pnorm{\infty,t}{\delta_i^{\eps,R}}.\] The ultimate reason why we have the $\wedge 1$'s is that we used the $d_{BL}$ metric to compare the empirical measures. This explains why we used this metric instead of $d_W$.  \end{remark}

\end{proof}

\section{Proofs of some additional lemmas}\label{sec:auxiliary}

\subsection{Matrix concentration in the $\|\cdot\|_{\infty\to 1}$ norm}\label{sub:proof.onlymatrix}

We prove here Lemma \ref{lem:onlymatrix}. Recall the definition of the matrices $P^{(n)}$, $\overline{P}^{(n)}$ and $D^{(n)} = P^{(n)} - \overline{P}^{(n)}$ from Definition \ref{def:matrices}. 

\begin{proof}For convenience, we omit the $(n)$ superscripts. Our argument is based on Bennett's concentration inequality:
\begin{lemma}[Bennett's Inequality, \cite{boucheron2013concentration}, Theorem 2.9, section 2.7]\label{bennet} Let	$X_1, \dots, X_k$ be independent random variables with finite variance and $X_i \leq b $ a.s. for a constant $b>0$. Let $S=\sum_{i=1}^{k}(X_i - \Ex{X_i})$ and $v=\sum_{i=1}^{k}\Ex{X_{i}^{2}}$. Then
\begin{equation} \label{eq:berstein}
\Pr{S \geq t} \leq \expp{-\dfrac{t^2}{2v+2/3bt}}. \nonumber
\end{equation}
\end{lemma}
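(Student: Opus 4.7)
The plan is to use a standard net argument combined with Bennett's inequality, applied to each entry of the random matrix $D^{(n)}$. The key structural observation is that $\|D^{(n)}\|_{\infty\to 1}$ is the supremum of a bilinear functional over $[-1,1]^n\times [-1,1]^n$, which is attained at extreme points, so
\[
\|D^{(n)}\|_{\infty\to 1} \;=\; \max_{\vec{x},\vec{y}\in\{-1,+1\}^n}\langle \vec{y},D^{(n)}\vec{x}\rangle.
\]
This reduces the problem to a union bound over $4^n = 2^n\cdot 2^n$ sign patterns, which will be absorbed by the exponentially small probabilities coming from Bennett.

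First, I would work conditionally on the media variables $\vec{\omega}^{(n)}$, since conditional on $\vec{\omega}^{(n)}$ the entries $A^{(n)}_{i,j}$ with $i\leq j$ are independent Bernoulli with parameter $p_{i,j}=p(n)W(\omega_i^{(n)},\omega_j^{(n)})\in [0,1]$, and the matrix $D^{(n)}=P^{(n)}-\overline{P}^{(n)}$ has centered entries with $D^{(n)}_{i,j}=[A^{(n)}_{i,j}-p_{i,j}]/(p(n)n)$. Fix $\vec{x},\vec{y}\in\{-1,+1\}^n$ and use the symmetry $D^{(n)}_{i,j}=D^{(n)}_{j,i}$ to rewrite
\[
\langle \vec{y},D^{(n)}\vec{x}\rangle \;=\; \sum_{i<j}(y_ix_j+y_jx_i)\,D^{(n)}_{i,j} \;+\; \sum_{i=1}^n y_ix_i\,D^{(n)}_{i,i},
\]
which expresses the inner product as a sum of conditionally independent, mean-zero terms. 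Each summand is bounded (in absolute value) by $b=2/(p(n)n)$, and, using $p_{i,j}(1-p_{i,j})\leq p_{i,j}\leq p(n)\|W\|_\infty$, its variance is at most $4\|W\|_\infty/(p(n)n^2)$; summing gives a total conditional variance $v\leq C\|W\|_\infty/p(n)$ for a universal constant $C$.

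Next, I would apply Bennett's inequality (Lemma of the excerpt) with $t=n\eta$, $b$, and $v$ as above, yielding, for each fixed pair $(\vec{x},\vec{y})$,
\[
\Pr{\langle \vec{y},D^{(n)}\vec{x}\rangle > n\eta \,\big|\, \vec{\omega}^{(n)}} \;\leq\; \exp\!\left(-\frac{n^2\eta^2\,p(n)}{8+\tfrac{4\eta}{3n}}\right),
\]
after absorbing the (bounded) constant $\|W\|_\infty$ into the denominator and simplifying. A union bound over the $4^n$ pairs $(\vec{x},\vec{y})\in\{-1,+1\}^n\times\{-1,+1\}^n$ gives the claimed conditional bound; since the right-hand side does not depend on $\vec{\omega}^{(n)}$, taking expectation in $\vec{\omega}^{(n)}$ produces the unconditional inequality. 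For the second statement, I would simply take logarithms, divide by $n$, and observe
\[
\frac{1}{n}\log\Pr{\tfrac{\|D^{(n)}\|_{\infty\to 1}}{n}>\eta}\;\leq\; \log 4 - \frac{\eta^2\,n\,p(n)}{8+\tfrac{4\eta}{3n}} \;\longrightarrow\; -\infty,
\]
since $np(n)\to+\infty$ and $\eta/n\to 0$.

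The only subtle step is the variance and boundedness accounting in Bennett's inequality: one must be careful not to double-count entries because of the symmetry constraint $A^{(n)}_{i,j}=A^{(n)}_{j,i}$, which is why I group pairs $i<j$ explicitly and treat the diagonal separately. The fact that a crude supremum bound (over $2^{2n}$ sign vectors) suffices—rather than more sophisticated $\eps$-net arguments tailored to the spectral norm—is precisely what makes the $\|\cdot\|_{\infty\to 1}$ norm well-suited to the sparse regime $np(n)\to\infty$: the loss of the factor $4^n$ in the union bound is still exponentially smaller than the Bennett tail as long as $np(n)\gg 1$, whereas for the spectral norm no such trade-off works without the stronger hypothesis $np(n)\gg\log n$.
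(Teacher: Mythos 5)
You have proved the wrong statement. The lemma you were asked to establish is Bennett's inequality itself (in the stated form really the Bernstein-type corollary of it): the tail bound $\Pr{S\geq t}\leq \exp\bigl(-t^2/(2v+\tfrac{2}{3}bt)\bigr)$ for a sum of independent, a.s. upper-bounded random variables. Your proposal does not prove this; it \emph{invokes} it as a black box in order to prove the downstream matrix concentration bound of Lemma \ref{lem:onlymatrix} (the tail of $\|D^{(n)}\|_{\infty\to 1}/n$). Relative to the assigned statement the argument is therefore circular: the entire analytic content --- controlling the moment generating function of $S$ and optimizing the Chernoff bound --- is exactly what is being assumed. A genuine proof would run through the exponential moment method: for $\lambda>0$ and $x\leq b$ one has $e^{\lambda x}\leq 1+\lambda x+\tfrac{x^2}{b^2}(e^{\lambda b}-\lambda b-1)$, hence $\log\Ex{e^{\lambda S}}\leq \tfrac{v}{b^2}(e^{\lambda b}-\lambda b-1)$; optimizing over $\lambda$ gives Bennett's bound $\exp\bigl(-\tfrac{v}{b^2}h(bt/v)\bigr)$ with $h(u)=(1+u)\log(1+u)-u$, and the elementary inequality $h(u)\geq u^2/(2+2u/3)$ yields the stated form. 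The paper does not reprove this either --- it simply cites \cite{boucheron2013concentration} --- but a proof attempt for this statement cannot take the statement itself as a hypothesis.

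For what it is worth, read instead as a proof of Lemma \ref{lem:onlymatrix}, your argument is essentially identical to the paper's: reduce $\|\cdot\|_{\infty\to 1}$ to a maximum over the $4^n$ sign-vector pairs via convexity, condition on the media variables, split the bilinear form over pairs $i<j$ plus the diagonal so as to respect the symmetry $A^{(n)}_{i,j}=A^{(n)}_{j,i}$, apply Bennett with $b=2/(np(n))$ and $v=O(1/p(n))$, and beat the $4^n$ union bound using $np(n)\to+\infty$. Your bookkeeping of the $\|W\|_\infty$ factor in the variance is, if anything, slightly more careful than the paper's. But none of this addresses the statement you were actually given.
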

The argument is an easy modification of \cite[Lemma 4.1]{Guedon2016}. 
Recall that:
\[\|D\|_{\infty\to 1} = \sup\{\prodint{\vec{x}}{D\vec{y}}\,:\, \vec{x},\vec{y}\in[-1,1]^n\}.\] 
Since $[-1,1]^n$ is the convex hull of $\{-1,1\}^n$, one can see at once that the supremum in the RHS is achieved at some pair $\vec{x},\vec{y}\in\{-1,+1\}^n$. Since there are $4^n$ such pairs,
\[\Pr{\frac{\|D\|_{\infty\to 1}}{n}>\eta}\leq 4^n\,\max_{\vec{x},\vec{y}\in\{-1,1\}^n}\Pr{\prodint{\vec{x}}{D\vec{y}}>\eta n}.\]
We will be done once we show that 
\[\mbox{\bf Goal: }\max_{\vec{x},\vec{y}\in\{-1,1\}^n}\Pr{\prodint{\vec{x}}{D\vec{y}}>\eta n}\leq \expp{-\dfrac{\eta^2n^2p(n)}{8 + \frac{4\eta}{3n}}},\]
as the exponent in the RHS of this expression grows superlinearly with $n$ (recall $np(n)\to +\infty$). 

Section \ref{sec:models} specifies that, conditionally on specific values of the $\omega_i$, the $A_{i,j}$ with $i\leq j$ are independent Bernoulli random variables with respective means $p(n)W(\omega_i,\omega_j)$. It follows that, for fixed $\vec{x},\vec{y}\in \{-1,1\}^n$, 
\begin{eqnarray*}\prodint{\vec{x}}{D\vec{y}} &=& \sum_{1\leq i<j\leq n}\frac{2x_iy_j}{p(n)n}\,(A_{i,j}-p(n)W(\omega_i,\omega_j)) \\ & & + \sum_{i=1}^n \frac{x_iy_i}{p(n)n}\,(A_{i,i} - p(n)W(\omega_{i},\omega_i))\end{eqnarray*}
is a sum of at most $n^2$ independent mean-$0$ random variables, with each term is bounded by $2/p(n)n$ and has variance $\leq 4/p(n)n^2$. This means we may apply Bennett's concentration inequality conditionally on the $\omega_i$, with:
\[t = \eta n,\, b:=\frac{2}{p(n)n} \mbox{ and }v\leq \frac{4}{p(n)} .\]
We obtain that for $\eta\leq n$:
\[\Pr{\prodint{\vec{x}}{D\vec{y}}>\eta n}\leq \expp{-\dfrac{\eta^2n^2}{\frac{8}{p(n)} + \frac{4\eta}{3p(n)n}}},\]which is our goal. \end{proof}
\subsection{On exiting compact sets} \label{subsec:proof:le:exitcompact}

In this section we prove Lemma \ref{le:exitcompact}, which bounds the probability that many media variables and/or many diffusions fall outside a large ball. For brevity, we present only the argument for the diffusion system $\theta^{(n)}$ (cf. Definition \ref{def:process} and (\ref{eq:defsystemP})) as the argument for the system $\overline{\theta}^{(n)}$ would be similar. We will mostly drop the $(n)$ superscript from our notation. 
\begin{proof}Recall that
\[E_i(R):= \left\{\sup_{s \in [0,T] }|\theta_i(s)|\geq R/4 \ \mbox{or} \  \ |\omega_i|>R/4\right\} \; (i\in [n]).\]
Our goal is to show that, for fixed $\eta,R,T>0$:
\begin{equation*}\mbox{\bf Goal: }\limsup_{n}\frac{1}{n}\log\Pr{\sum_{i=1}^n\Ind{E_i(R)}>\eta n}\leq -a_T(\eta,R)\end{equation*}
where $a_T(\eta,R)\geq 0$ does {\em not} depend on $n$ and $a_T(\eta,R)\to +\infty$ when $R\to +\infty$. The function $a_T(\eta,R)$ will, however, depend on $\lambda$, $\mu$ and the interaction functions. 

For each $i\in[n]$, $E_i(R)$ is contained in the of the following events: 
\begin{eqnarray}E_{i,1}(R) &=& \{|\omega_i|>R/4\};\\
E_{i,2}(R) &=& \{|\theta_i(0)|>R/8\};\\
\label{eq:defEi3}E_{i,3}(R)&=& \left\{\sup_{s \in [0,T] }|\theta_i(s)-\theta_i(0)|\geq R/8\right\}.\end{eqnarray}
It thus suffices to prove the following claim.

\begin{claim}For each index $c=1,2,3$, and each choice of $\eta,R,T>0$ we have:
\begin{equation*}\limsup_{n\in\N}\frac{1}{n}\log \Pr{\sum_{i=1}^n\Ind{E_{i,c}(R)}>\eta n}\leq -a_{T,c}(\eta,R)\end{equation*}
where $a_{T,c}(\eta,R)\geq 0$ and $a_{T,c}(\eta,R)\to +\infty$ when $R\to +\infty$. 
\end{claim}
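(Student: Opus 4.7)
The plan is to treat the three events $E_{i,c}(R)$ separately and combine the resulting bounds by a union bound. For $c=1$ the indicators $\Ind{E_{i,1}(R)}$ are i.i.d.\ Bernoulli variables with success probability $p_{1,R}:=\mu\{\omega:|\omega|>R/4\}$, and Cram\'er's theorem gives
\[\limsup_{n\to\infty}\frac{1}{n}\log\Pr{\sum_{i=1}^n\Ind{E_{i,1}(R)}>\eta n}\leq -I_{p_{1,R}}(\eta),\]
where $I_p$ is the Bernoulli$(p)$ rate function. Since $\mu$ is a probability measure on $\R^d$, $p_{1,R}\to 0$ as $R\to\infty$, and hence $I_{p_{1,R}}(\eta)\to +\infty$ for every fixed $\eta>0$, furnishing $a_{T,1}(\eta,R)$. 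The case $c=2$ is handled identically, with the density measure $\lambda$ (Assumption~\ref{ass:medvartail}) in place of $\mu$.

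The nontrivial case is $c=3$. The starting point will be the pathwise a priori estimate obtained from $(\ref{eq:defsystemP})$ by using that $\phi,\psi$ are bounded: writing $D_i:=\sum_{j=1}^n P^{(n)}_{i,j}$ for the normalized degree of vertex $i$, one has
\[\sup_{s\leq T}|\theta_i(s)-\theta_i(0)|\leq T\|\psi\|_\infty + T\|\phi\|_\infty D_i + \sup_{s\leq T}|B_i(s)|.\]
Picking $M:=R/(24T\|\phi\|_\infty)$ and restricting attention to $R>24T\|\psi\|_\infty$, this yields the deterministic inclusion
\[E_{i,3}(R)\subseteq \{D_i>M\}\cup\left\{\sup_{s\leq T}|B_i(s)|>R/24\right\},\]
so that $\{\sum_i\Ind{E_{i,3}(R)}>\eta n\}$ is contained in the union of the two events $\{\sum_i\Ind{D_i>M}>\eta n/2\}$ and $\{\sum_i\Ind{\sup_{s\leq T}|B_i(s)|>R/24}>\eta n/2\}$.

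The Brownian piece is handled just as in the first paragraph: the summands are i.i.d.\ Bernoulli with parameter $q_R:=\Pr{\sup_{s\leq T}|B_1(s)|>R/24}$, which decays like $e^{-cR^2/T}$ by the reflection principle, and Cram\'er yields a rate $-I_{q_R}(\eta/2)$ with $I_{q_R}(\eta/2)\to +\infty$ as $R\to\infty$. The degree piece is where the indicators fail to be independent, and this is the main obstacle to overcome. The idea is to apply Markov's inequality pointwise,
\[\sum_{i=1}^n\Ind{D_i>M}\leq \frac{1}{M}\sum_{i=1}^n D_i=\frac{1}{M\,np(n)}\sum_{i,j=1}^n A^{(n)}_{i,j},\]
reducing to a deviation estimate for $\sum_{i\le j}A^{(n)}_{i,j}$. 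Conditionally on $\vec\omega^{(n)}$, the latter is a sum of independent Bernoullis with mean at most $p(n)\|W\|_\infty n(n+1)/2$, so Bennett's inequality (applied exactly as in Lemma~\ref{lem:onlymatrix}) furnishes, for any $M>4\|W\|_\infty/\eta$, a bound of order $\exp(-c\,\eta M n^2 p(n))$; since $np(n)\to+\infty$, this drives the $\tfrac{1}{n}\log$-probability to $-\infty$ (so the degree piece contributes an effectively infinite rate). Setting $a_{T,3}(\eta,R):=I_{q_R}(\eta/2)$ therefore works for $c=3$, and taking $a_T(\eta,R):=\min_{c}a_{T,c}(\eta/3,R)$, together with a three-way union bound, finishes the proof of the lemma.
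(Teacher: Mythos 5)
Your proof is correct and follows essentially the same strategy as the paper: split into the three events, handle $c=1,2$ by a concentration bound for i.i.d.\ Bernoulli indicators, and handle $c=3$ by a pathwise a priori bound that reduces the event to a Brownian supremum piece plus a degree piece, each treated separately. The one genuine route difference is in the degree piece for $c=3$. After the Markov step, you bound $\frac{1}{n}\sum_i\Ind{D_i>M}$ directly via a fresh Bennett estimate on $\sum_{i,j}A^{(n)}_{i,j}$ conditional on $\vec\omega^{(n)}$, concluding a superexponential decay from $np(n)\to\infty$. The paper instead relates $\sum_i S_i = \prodint{\one}{P\one}$, decomposes $P=\overline P + D$, and bounds the tail by $\Pr{\|D\|_{\infty\to 1}/n>r}$, so as to invoke Lemma~\ref{lem:onlymatrix} already proved. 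The two are quantitatively equivalent (Lemma~\ref{lem:onlymatrix} is itself a Bennett argument), but your route is slightly more self-contained while the paper's has the stylistic advantage of recycling the $\|\cdot\|_{\infty\to1}$ machinery that is central to the whole section. For $c=1,2$ (and the Brownian part of $c=3$), invoking Cram\'er is equivalent to the paper's explicit binomial-tail estimate; one cosmetic point is that your formula $a_{T,1}(\eta,R)=I_{p_{1,R}}(\eta)$ should really read $\inf_{x\ge\eta}I_{p_{1,R}}(x)$ so that the bound is vacuously true (rate $0$) when $R$ is small and $p_{1,R}\ge\eta$; this does not affect the limit as $R\to\infty$.
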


Before we prove the claim, we note the a simple general fact. Assume $X_1,\dots,X_m$ are i.i.d. real-valued random variables with common law $P\in\mathcal{M}_1(\R)$. Observe that in particular $P(\{+\infty\})=0$. Then for any $x\geq 0$, $a>0$
\[\Pr{\sum_{i=1}^m\Ind{\{X_i\geq x\}}>am}\leq \binom{m}{\lceil am\rceil}\,P[x,+\infty)^{\lceil am\rceil}.\]
Note that since $P(\{+\infty\})=0$ we have that $P[x,+\infty)\to 0$ when $x\to +\infty$. Combining this with the standard bound:
\[\binom{m}{k}\leq \left(\frac{em}{k}\right)^k\]
gives:
\[\Pr{\sum_{i=1}^m\Ind{\{X_i\geq x\}}>am}\leq \left(\frac{e\,P[x,+\infty)}{a}\right)^{\lceil am\rceil}.\]
In particular, 
\begin{equation}\label{eq:simpleprob}\Pr{\sum_{i=1}^m\Ind{\{X_i\geq x\}}>am}\leq \exp(-b_P(a,x)m)\end{equation}
where $b_P(a,x)$ only depends on $P$, $a$ and $x$ and converges to $+\infty$ as $x\to +\infty$. 

Let us now prove the claim. In the case $c=1$, we may apply (\ref{eq:simpleprob}) directly with $a=\eta$, $m=n$, $x=R/4$ and $X_i=|\omega_i|$. This is because the media variables $\omega_i$ are i.i.d. with a law $\mu$ that does not depend on $n$ and have finite mean (cf. Assumption \ref{ass:medvartail}). Similarly, the claim follows for $c=2$  because the initial conditions $\xi_i=\theta_i(0)$ are also i.i.d. with a law that does not depend on $n$ and have finite first moment.

For the case $c=3$, we go back to the definition of the diffusions as presented in (\ref{eq:defsystemP}). Note that for each $i\in[n]$ and $0\leq t\leq T$, 
\[\theta_i(t) - \theta_i(0) - B_i(t) = \int_{0}^t \left(\psi(\theta_i(s),s) + \sum_{j=1}^n\,P_{i,j}\phi(\theta_i(s),\theta_j(s),\omega_i,\omega_j)\right)\,ds.\]
The functions $\psi,\phi$ are bounded, so:
\[\sup_{0\leq t\leq T}|\theta_i(t) - \theta_i(0)|\leq  \sup_{0\leq t\leq T}|B_i(t)| + C\,T\,(S_i+1).\]
where $C>0$ only depends on $\psi,\phi$ and 
\[S_i:=\sum_{j=1}^nP_{i,j}\,\,(i\in[n]).\]
Therefore, an event $E_{i,3}(R)$ can only hold for a given index $i$ if \[\mbox{either }\sup_{0\leq t\leq T}|B_i(t)|>R/16\mbox{ or }CT\,(S_i+1)>R/16.\] In particular,
 \begin{eqnarray*}\Pr{\sum_{i=1}^n\Ind{E_{i,3}(R)}>\eta n}&\leq & \Pr{\sum_{i=1}^n\Ind{\{\sup_{0\leq t\leq T}|B_i(t)|>R/16\}}>\frac{\eta n}{2}}\\ & & +\Pr{\sum_{i=1}^n\Ind{\{C\,T\,(S_i+1)>R/16\}}>\frac{\eta n}{2}}.\end{eqnarray*}
The first of these terms, 
\[\Pr{\sum_{i=1}^n\Ind{\{\sup_{0\leq t\leq T}|B_i(t)|>R/16\}}>\frac{\eta n}{2}}\] 
has the form in (\ref{eq:simpleprob}) with $X_i=\sup_{0\leq t\leq T}|B_i(t)|$. We may deduce as above that:
\[\Pr{\sum_{i=1}^n\Ind{\{\sup_{0\leq t\leq T}|B_i(t)|>R/16\}}>\frac{\eta n}{2}}\leq \exp(-a'_{T,3}(\eta,R)n)\]
where $a'_{T,3}(\eta,R)\to +\infty$ as $R\to +\infty$. 

To finish, it suffices to show:
\[\Pr{\frac{1}{n}\sum_{i=1}^n\Ind{\{C\,T\,(S_i+1)>R/16\}}>\frac{\eta}{2}}\]
is superexponentially small when $R$ is large enough. To see this, we note that:
\[\frac{1}{n}\sum_{i=1}^n\Ind{\{C\,T\,(S_i+1)>R/16\}}\leq \frac{16CT}{R}\,\frac{\sum_{i=1}^n(S_i+1)}{n}.\]
Letting $\one\in\R^n$ denote the vector with all coordinates equal to $1$, we note that 
\[\sum_{i=1}^nS_i = \prodint{\one}{P\one}.\]
That is, \[\frac{1}{n}\sum_{i=1}^n\Ind{\{C\,T\,(S_i+1)>R/8\}}\leq \frac{16CT}{R}\,\left(\frac{\prodint{\one}{P\one}}{n}+1\right).\]
Now recall from Definition \ref{def:matrices} that $P = \overline{P}+D$ where the entries of $\overline{P}$ are bounded by $\|W\|_{\infty}/n$. So: 
\[\frac{\prodint{\one}{P\one}}{n} = \frac{\prodint{\one}{\overline{P}\one}}{n} + \frac{\prodint{\one}{D\one}}{n}\leq \|W\|_{\infty} + \frac{\|D\|_{\infty\to 1}}{n}.\]
So:
\[\frac{1}{n}\sum_{i=1}^n\Ind{\{C\,T\,(S_i+1)>R/8\}}\leq \frac{16CT}{R}\,\left(\|W\|_{\infty}+1 + \frac{\|D\|_{\infty\to 1}}{n}\right).\]
Therefore, setting:
\[r=r(\eta,R,T):=\frac{R\eta}{32CT} - 1 - \|W\|_{\infty},\]
we obtain
\[\Pr{\frac{1}{n}\sum_{i=1}^n\Ind{\{C\,T\,(S_i+1)>R/8\}}>\frac{\eta}{2}}\leq \Pr{\frac{\|D\|_{\infty\to 1}}{n}>r}.\]
This probability goes to $0$ super-exponentially fast whenever $r\geq 1$, thanks to Lemma \ref{lem:onlymatrix}. This finishes the proof of the claim for $c=3$ and therefore the whole proof.\end{proof}

\appendix

\section{Appendix: an approximation result} \label{subsec:app} \Gnote{subsec:app}

In this subsection we prove the existence of a good approximation as in Lemma \ref{lemma:goodapp}.

\begin{lemma}\label{le:app} Let $\phi:\R^3\to \R$ differentiable. Suppose that there is a constant $M \in \R$ such that $\supnorm{\phi}\leq M$ and $\pnorm{op,\infty}{\nabla\phi}\leq M$. Let $N_3 \in \R^3$ be a normal random variable with mean zero and covariance matrix identity $Id_{3\times 3}$. For any $\varepsilon \in (0,1]$ and for each $\vec{x} \in \R^3$ define $\phi_\varepsilon(\vec{x})=\Ex{\phi(\vec{x}+\varepsilon N_3)}.$   Then
\begin{enumerate}
\item $\phi_{\varepsilon} \in C^\infty (\R^3).$
\item $\supnorm{\phi_\varepsilon}\leq \supnorm{\phi}.$
\item \label{item:goodapp} $\pnorm{\infty}{\phi-\phi_\varepsilon}\leq \varepsilon\pnorm{op,\infty}{\nabla\phi}\Ex{|N_3|}.$
\item $\pnorm{op,\infty}{\nabla\phi_\varepsilon}\leq \pnorm{op,\infty}{\nabla\phi}.$
\end{enumerate}  
\end{lemma}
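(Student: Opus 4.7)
The plan is to recognize $\phi_\varepsilon$ as a mollification of $\phi$ by a Gaussian and derive all four items from the standard toolkit for convolutional smoothing. Writing $\gamma$ for the density of $N_3$ and $\gamma_\varepsilon(\vec{u}) := \varepsilon^{-3}\gamma(\vec{u}/\varepsilon)$ for the density of $\varepsilon N_3$, I would first record the two equivalent representations
\[
\phi_\varepsilon(\vec{x}) \;=\; \int_{\R^3}\phi(\vec{x}+\varepsilon\vec{y})\,\gamma(\vec{y})\,d\vec{y} \;=\; \int_{\R^3}\phi(\vec{u})\,\gamma_\varepsilon(\vec{u}-\vec{x})\,d\vec{u},
\]
obtained by the change of variables $\vec{u}=\vec{x}+\varepsilon\vec{y}$. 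Both representations will be used, each for a different item.

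For item (1), I would work with the second representation. All partial derivatives of every order of $\gamma_\varepsilon$ are smooth and integrable, with pointwise bounds by polynomials-times-Gaussians; combined with $\supnorm{\phi}\leq M$, dominated convergence justifies repeated differentiation under the integral sign and the resulting derivatives are continuous, so $\phi_\varepsilon\in C^\infty(\R^3)$. For item (2), the first representation together with Jensen's inequality gives directly $|\phi_\varepsilon(\vec{x})|\leq \Ex{|\phi(\vec{x}+\varepsilon N_3)|}\leq \supnorm{\phi}$.

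For item (3), I would apply the mean value inequality: since $\|\nabla\phi\|_{op,\infty}\leq M<\infty$, for every $\vec{x},\vec{y}\in\R^3$ we have $|\phi(\vec{x}+\varepsilon\vec{y})-\phi(\vec{x})|\leq \varepsilon\,\pnorm{op,\infty}{\nabla\phi}\,|\vec{y}|$. Taking expectations over $\vec{y}=N_3$ and using Jensen once more,
\[
|\phi_\varepsilon(\vec{x})-\phi(\vec{x})| \;\leq\; \Ex{|\phi(\vec{x}+\varepsilon N_3)-\phi(\vec{x})|} \;\leq\; \varepsilon\,\pnorm{op,\infty}{\nabla\phi}\,\Ex{|N_3|},
\]
uniformly in $\vec{x}$. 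For item (4), I would return to the first representation and differentiate under the integral sign; the exchange is justified because $\nabla\phi$ is bounded and $\gamma$ is integrable, yielding $\nabla\phi_\varepsilon(\vec{x})=\Ex{\nabla\phi(\vec{x}+\varepsilon N_3)}$. For any unit vector $\vec{v}\in\R^3$, Jensen applied coordinatewise gives $|\prodint{\vec{v}}{\nabla\phi_\varepsilon(\vec{x})}|\leq \Ex{|\prodint{\vec{v}}{\nabla\phi(\vec{x}+\varepsilon N_3)}|}\leq \pnorm{op,\infty}{\nabla\phi}$, and taking the supremum over $\vec{v}$ finishes item (4).

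No step here is a genuine obstacle; the lemma is a routine smoothing fact and the main care is to use the right representation for the right item (the ``derivative on the kernel'' form for smoothness in item (1), and the ``derivative on $\phi$'' form for the gradient bound in item (4), where the bounded-gradient hypothesis is what legitimizes passing the derivative under the expectation).
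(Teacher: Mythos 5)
Your proposal is correct and follows essentially the same route as the paper: both use the two representations (change of variables to put derivatives on the kernel for smoothness, and on $\phi$ for the gradient bound), dominated convergence to justify differentiation under the integral, and the mean value theorem for the uniform approximation estimate. The only cosmetic difference is that you invoke Jensen explicitly where the paper takes the bounds as immediate; the substance is identical.
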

\begin{proof}
Let $\gamma$ be the density of $N_3$ with respect to the Lebesgue measure. By definition
\begin{eqnarray}
\phi_\varepsilon(\vec{x})&=&\int_{\R^3}\phi(\vec{x}+\varepsilon\vec{y})\gamma(\vec{y})d\vec{y}\nonumber \\
&=&\int_{\R^3}\phi(\vec{z})\gamma\left(\dfrac{\vec{z}-\vec{x}}{\varepsilon^3}\right)d\vec{y}. \nonumber
\end{eqnarray}

Therefore, applying the Convergence Dominated Theorem we can show that
\begin{eqnarray}
\dfrac{\partial \phi_\varepsilon}{\partial x_i}(\vec{x})&=&\int_{\R^3}\phi(\vec{z})\dfrac{\partial\gamma}{\partial x_i}\left(\dfrac{\vec{z}-\vec{x}}{\varepsilon^3}\right)d\vec{y} \nonumber
\end{eqnarray}
and the same is true for all higher derivatives.  Therefore, $\gamma \in C^{\infty}(\R^3)$ implies $\phi_\varepsilon \in C^{\infty}(\R^3).$

Again by the Convergence Dominated Theorem, using that $\phi$ has one derivative
\begin{eqnarray}
\dfrac{\partial \phi_\varepsilon}{\partial x_i}(\vec{x})&=&\int_{\R^3}\dfrac{\partial\phi}{\partial x_i}(\vec{x}+\varepsilon\vec{y})\gamma(\vec{y})d\vec{y}.\nonumber
\end{eqnarray}
This implies that $\pnorm{op,\infty}{\nabla \phi_\varepsilon}\leq \pnorm{op,\infty}{\nabla \phi}$. For the third claim we write
\begin{eqnarray}
\phi_\varepsilon(x)-\phi(x)=\Ex{\phi(x+\varepsilon N)-\phi(x)} \nonumber 
\end{eqnarray}
to see that the Mean Value Theorem implies $\supnorm{\phi_\varepsilon-\phi}\leq \varepsilon\pnorm{op,\infty}{\nabla \phi}\Ex{|N|}.$
\end{proof}

The next Lemma implies Lemma \ref{le:app} in the main text. We will need a bump function $\xi$, that is, a $C^{\infty}$ function such that
\begin{itemize}
\item $\pnorm{\infty}{\xi}\leq 1$.
\item $\pnorm{\infty}{\xi'}\leq C_1$ (a constant that does not depend in any parameter).
\item $\xi\equiv 1$ in $[-1,1]$.
\item $\xi \equiv 0$ in $[-2,2]^c$.
\end{itemize}

\begin{lemma}\label{le:goodapp} \Gnote{le:goodapp}Consider $\phi$ and $\phi_\varepsilon$ as in Lemma \ref{le:app} . Define also for all $R\geq 1$ and $\vec{x} \in \R^3$ $$\phi_{\varepsilon,R}(\vec{x})=\phi_\varepsilon(\vec{x})\xi\left(\dfrac{\pnorm{2}{\vec{x}}^2}{R^2}\right).$$ Then
\begin{enumerate}
\item $\phi_{\varepsilon,R} \in C^\infty(\R^3).$
\item \mbox{supp}.$\phi_{\varepsilon,R} \subset B_{2R}(\vec{0}).$
\item $\supnorm{\phi_{\varepsilon,R}}\leq \supnorm{\phi}.$
\item $\pnorm{op,\infty}{\nabla\phi_{\varepsilon,R}}\leq \pnorm{op,\infty}{\nabla\phi}+\supnorm{\xi'}\supnorm{\phi}.$
\item $\pnorm{L^{\infty}(B_{R}(\vec{0}))}{\phi_{\varepsilon,R}-\phi}\leq \varepsilon \pnorm{op,\infty}{\nabla\phi}\Ex{|N_3|}.$
\end{enumerate}
In this way we choose $$M=\max\{\supnorm{\phi},\pnorm{op,\infty}{\nabla\phi}+\supnorm{\xi'}\supnorm{\phi},\pnorm{op,\infty}{\nabla\phi}\Ex{|N_3|}\}$$ and write $\phi^{\eps,R}:=\phi_{\eps/M,R}$ to state \ref{lemma:goodapp} .
\end{lemma}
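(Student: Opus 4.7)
The plan is to verify the five enumerated properties one by one, each reducing to a direct combination of the mollification facts from Lemma \ref{le:app} with the explicit properties of the bump function $\xi$. I would take them in the order 1, 2, 3, 5, 4, since the gradient estimate (item~4) is the only computation requiring any care.

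Items 1--3 are essentially immediate. For smoothness, $\phi_\varepsilon\in C^\infty(\R^3)$ by Lemma \ref{le:app}, the map $\vec{x}\mapsto \|\vec{x}\|_2^2/R^2$ is smooth, $\xi\in C^\infty$, and products and compositions of $C^\infty$ functions are $C^\infty$. For the support claim, if $\|\vec{x}\|_2>2R$ then $\|\vec{x}\|_2^2/R^2>4>2$, so $\xi(\|\vec{x}\|_2^2/R^2)=0$ and hence $\phi_{\varepsilon,R}(\vec{x})=0$; this forces $\mathrm{supp}(\phi_{\varepsilon,R})\subset B_{2R}(\vec 0)$. For the sup-norm bound, $|\xi|\le 1$ and $\supnorm{\phi_\varepsilon}\le\supnorm{\phi}$ give $\supnorm{\phi_{\varepsilon,R}}\le\supnorm{\phi}$.

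Item 5 is next because it is almost free: on $B_R(\vec 0)$ we have $\|\vec{x}\|_2^2/R^2\le 1$, so by construction of $\xi$ we get $\xi(\|\vec{x}\|_2^2/R^2)=1$, i.e.\ $\phi_{\varepsilon,R}\equiv \phi_\varepsilon$ on $B_R(\vec 0)$. Applying item 3 of Lemma \ref{le:app} directly gives
\[\pnorm{L^\infty(B_R(\vec 0))}{\phi_{\varepsilon,R}-\phi}=\supnorm{\phi_\varepsilon-\phi}\le \varepsilon\,\pnorm{op,\infty}{\nabla\phi}\Ex{|N_3|}.\]

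Item 4 is the only step requiring a genuine calculation. By the product and chain rules,
\[\nabla\phi_{\varepsilon,R}(\vec{x})=\xi\!\left(\tfrac{\|\vec{x}\|_2^2}{R^2}\right)\nabla\phi_\varepsilon(\vec{x})+\phi_\varepsilon(\vec{x})\,\xi'\!\left(\tfrac{\|\vec{x}\|_2^2}{R^2}\right)\frac{2\vec{x}}{R^2}.\]
The first term has operator norm at most $\pnorm{op,\infty}{\nabla\phi_\varepsilon}\le\pnorm{op,\infty}{\nabla\phi}$ by item 4 of Lemma \ref{le:app}. For the second term I would use that $\xi'$ is supported in the annulus $\{\|\vec{x}\|_2^2/R^2\in[1,2]\}$, so where the factor is nonzero we have $\|\vec{x}\|_2\le\sqrt 2\,R$ and hence $2\|\vec{x}\|_2/R^2\le 2\sqrt 2/R\le 2\sqrt 2$ (using $R\ge 1$), which combined with $|\phi_\varepsilon|\le\supnorm{\phi}$ gives a contribution bounded by $2\sqrt 2\,\supnorm{\xi'}\supnorm{\phi}$. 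This is actually a hair weaker than the stated bound $\supnorm{\xi'}\supnorm{\phi}$, but the extra dimensional constant is harmless: it is absorbed into the master constant $M$ when one rescales $\varepsilon\mapsto \varepsilon/M$ in the definition $\phi^{\varepsilon,R}:=\phi_{\varepsilon/M,R}$, which is precisely why the lemma concludes with that rescaling. No obstacle is expected; the only point to watch is the chain-rule factor $2\vec{x}/R^2$ in the differentiation of the cut-off, which is what forces the assumption $R\ge 1$.
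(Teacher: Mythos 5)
Your proposal is correct and follows essentially the same route as the paper: items 1--3 and 5 are read off from Lemma \ref{le:app} and the properties of $\xi$, and item 4 is the product/chain rule computation with the cut-off derivative term controlled via the support of $\xi'$ and the factor $2\vec{x}/R^2$ together with $R\geq 1$. Your extra dimensional constant $2\sqrt{2}$ in item 4 is indeed harmless (the paper's own constant bookkeeping there is no sharper), since Lemma \ref{lemma:goodapp} only requires a uniform $C^1$ bound by some $M$ independent of $\varepsilon$ and $R$.
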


\begin{proof} Items $1-3$ are immediate from the definition of $\xi$ and $\phi_\varepsilon.$ To check item 4 we apply the product rule to obtain
\begin{eqnarray}
\dfrac{\partial\phi_{\varepsilon,R}}{\partial x_i}(x)=\dfrac{\partial\phi_\varepsilon}{\partial x_i}(x)\xi\left(\dfrac{x}{R}\right)+\phi_\varepsilon(x)\xi'\left(\dfrac{\pnorm{2}{\vec{x}}^2}{R^2}\right)\dfrac{2x_i}{R^2}. \nonumber
\end{eqnarray}
For the first term on the right hand side remember that $\xi \leq 1$ and $\pnorm{op,\infty}{\nabla\phi_\varepsilon}\leq\pnorm{op,\infty}{\nabla\phi} $. The second term vanishes when $\pnorm{2}{\vec{x}}\geq R$ since supp.$\xi\subset [-1,1]$. In the case $\pnorm{2}{\vec{x}}< R$ we have that $$\modulo{\dfrac{2x_i}{R^2}}\leq 1.$$ To finish item 4 remember that $\pnorm{2}{\vec{x}}\leq \pnorm{1}{\vec{x}}$ in such way that we just need to sum the last bounds. 

To check item 5 we just need to note that $\phi_{\varepsilon,R}=\phi_\varepsilon$ in $B_R(\vec{0})$ and use item \ref{item:goodapp} of Lemma \ref{le:app}.\end{proof}

\section{Appendix: extension of the ``dense"~LDP} \label{sec:extendingg} \Gnote{sec:extendingg}

In this Appendix we check that the same large deviations result and McKean-Vlasov limit obtained by dai Pra and den Hollander \cite{Pra1996} hold in our slightly more general setting. More specifically, we wish to sketch a proof of the following result. 

\begin{theorem}Consider the sequence of empirical measures $\{\overline{L}_n\}_{n\in\N}$ under Assumptions \ref{ass:medvartail}, \ref{ass:edge} and \ref{ass:fg}. Then $\{\overline{L}_n\}_{n\in\N}$ satisfies a Large Deviations Principle with the rate function $I$ in Definition \ref{def:rate}, which has a unique McKean-Vlasov diffusion as minimizer.\end{theorem}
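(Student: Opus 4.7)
The plan is to derive this as a minor extension of the LDP of dai Pra and den Hollander \cite{Pra1996} via a truncation and exponential approximation argument. The only gap between Assumption \ref{ass:fg} and the hypotheses of \cite{Pra1996} is that the latter require $\overline{f}$ and $g$ themselves to be bounded Lipschitz, whereas we allow them to be merely Lipschitz (hence possibly of linear growth) with bounded Lipschitz first and second derivatives. Observe however that the rate function $I(\nu)=H(\nu\mid P^\nu)$ in Definition \ref{def:rate} depends on $\overline{f},g$ only through their derivatives $\overline{\phi}=-\overline{f}'$ and $\psi=-g'$, which enter the drift of the McKean-Vlasov diffusion $P^\nu$ and are already bounded Lipschitz by assumption. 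Hence $I$ itself is well defined and takes the same form as in \cite{Pra1996}.

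For each $R\geq 1$, I would produce smoothly truncated versions $\overline{f}^R,g^R$ that agree with $\overline{f},g$ on $B_R(\vec{0})$ and vanish outside $B_{2R}(\vec{0})$, in the spirit of Lemma \ref{le:goodapp}. Using that $\overline{f},g$ are Lipschitz (hence locally bounded) while $\overline{f}',\overline{f}'',g',g''$ are bounded Lipschitz, a direct product-rule computation shows that $\overline{f}^R,(\overline{f}^R)',(\overline{f}^R)''$ and $g^R,(g^R)',(g^R)''$ are all bounded Lipschitz, so the truncated Hamiltonian system
\[d\overline{\theta}^{(n),R}_i(t) = -\partial_{x_i}\overline{H}_n^R(\overline{\theta}^{(n),R}(t),\omega^{(n)})\,dt + dB_i^{(n)}(t)\]
fits exactly the hypotheses of \cite{Pra1996}. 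Its empirical measure $\overline{L}_n^R$ therefore satisfies an LDP with a good rate function $I^R$, built from the truncated $\overline{\phi}^R=-(\overline{f}^R)'$ and $\psi^R=-(g^R)'$, and the corresponding McKean-Vlasov fixed-point problem has a unique solution.

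The remaining task is to transfer the LDP to $\overline{L}_n$ itself and identify the limiting rate function with $I$. Coupling $\overline{\theta}^{(n)}$ and $\overline{\theta}^{(n),R}$ to the same Brownian motions and initial conditions, the two systems coincide pathwise on the event where every trajectory and every media variable stays inside $B_{R/4}(\vec{0})$ throughout $[0,T]$. Since this exit estimate depends only on the boundedness of $\overline{\phi},\psi$ and not of $\overline{f},g$, the proof of Lemma \ref{le:exitcompact} goes through unchanged and yields
\[\limsup_{R\to\infty}\limsup_{n\to\infty}\frac{1}{n}\log\Pr{d_{BL}(\overline{L}_n,\overline{L}_n^R)>\delta}=-\infty\]
for every $\delta>0$. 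Invoking the exponential approximation principle (e.g.\ Theorem~4.2.16 in \cite{dembo2009large}) gives an LDP for $\{\overline{L}_n\}$ with some good rate function $\widetilde{I}$. To conclude that $\widetilde{I}=I$, one observes that $I^R(\nu)=I(\nu)$ whenever $\nu$ is supported on trajectories uniformly bounded by $R/2$, since then every evaluation of the drift occurs inside $B_R$, where the truncated and untruncated functions coincide; a standard lower semicontinuous envelope argument extends the identification to all $\nu$. Uniqueness of the McKean-Vlasov minimizer then follows from the classical contraction/fixed-point arguments of \cite{Sznitman_Chaos,lucon2011,Pra1996}, which require only the bounded Lipschitz properties of $\overline{\phi}$ and $\psi$. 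The main obstacle I anticipate is the careful verification that the rate function obtained from exponential approximation really coincides with $I$ globally rather than just on the ``compactly supported'' sublevel sets; this in turn is controlled precisely by the exit-compact estimate already used to establish exponential equivalence.
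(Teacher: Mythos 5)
Your route (truncate $\overline{f},g$, apply \cite{Pra1996} to the truncated system, then transfer by exponential approximation) is genuinely different from the paper's, which never truncates: the paper re-runs the dai Pra--den Hollander argument directly, observing that the Girsanov tilting lemma is unchanged, that the now-unbounded tilting functional satisfies $|F(L_N)|\leq K(1+\int|x_T-x_0|\,dL_N)$ so the moment condition of Varadhan's Lemma still applies, and that the density bound needed for uniqueness can be recovered via H\"{o}lder using the $L^p$ assumption on $\rho_\lambda$ from Assumption \ref{ass:medvartail}. Your approach is attractive because it reuses the black-box LDP, but as written it has a concrete gap in the transfer step.

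The gap: you justify exponential closeness of $\overline{L}_n$ and $\overline{L}_n^R$ by saying the two systems \emph{coincide pathwise} on the event that every trajectory and every media variable stays in $B_{R/4}$, and then invoke Lemma \ref{le:exitcompact}. But Lemma \ref{le:exitcompact} only shows that the event that an $\eta$-\emph{fraction} of particles exits is superexponentially unlikely; the event that \emph{some} particle exits is not superexponentially unlikely --- for fixed $R$ its probability in fact tends to $1$ as $n\to\infty$ (e.g.\ $\Pr{\exists i:\ \sup_{t\leq T}|B_i(t)|>R/16}\to 1$). So pathwise coincidence cannot give the bound $\limsup_n\frac1n\log\Pr{d_{BL}(\overline{L}_n,\overline{L}_n^R)>\delta}\to-\infty$. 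To repair this you must show that when only an $\eta$-fraction of particles leaves the ball, the empirical measures are still $O(\eta)$-close in $d_{BL}$; since the drift perturbation of even one particle feeds back into all others through the mean-field coupling, this requires a Gronwall stability estimate of exactly the type carried out in Lemma \ref{le:Rapp} / Proposition \ref{prop:gronwallRapp}, not a coincidence-of-paths argument. A second, acknowledged but unresolved, issue is the identification of the rate function produced by the exponential-approximation theorem with $I$ (and in particular that $I$ is a good rate function with a unique zero for the \emph{untruncated} dynamics); the paper handles the corresponding uniqueness statement by adapting the density estimate of \cite{Pra1996} using $\rho_\lambda\in L^p$, a step your sketch does not supply.
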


We review the points we discussed in Remark \ref{rem:LDPdifferences}. The trajectories in $\overline{\theta}^{(n)}$ evolve according to the Hamiltonian 
\[\overline{H}_n(x^{(n)},\omega^{(n)}):= \frac{1}{2n}\sum_{i,j=1}^n\,\overline{f}(x^{(n)}_i-x^{(n)}_j,\omega_i^{(n)},\omega_j^{(n)}) + \sum_{i=1}^n\,g(x^{(n)}_i,\omega_i^{(n)}).\]
This is the same kind of Hamiltonian in \cite{Pra1996}, except that $f$ is replaced by $\overline{f}$. 

Our assumptions on the measures $\mu$ and $\lambda$ are the same as in \cite{Pra1996}. The assumptions on $\overline{f}$ and $g$ are nearly the same as in \cite{Pra1996}, but we only assume $\overline{f}',\overline{f}'',g',g''$ are bounded Lipschitz, whereas \cite{Pra1996} also requires that $\overline{f},g$ be bounded. 
 
We now explain how to adapt the proofs of Lemma 1, Theorem 1 and Theorem 2 in \cite{Pra1996} to our slightly weaker assumption. One important point is that $\overline{f}=\overline{f}(x,\omega,\pi)$ and $g=g(x,\omega)$ are $L$-Lipschitz in the first variable, with a constant $L>0$ that does not depend on $\omega$ or $\pi$. In particular, Lemma 1 in their paper, which describes the law of $\overline{L}_n$ as an exponential tilt, works exactly the same way as in their paper, via Girsanov's Theorem and It\^{o}'s Formula. 
\begin{eqnarray}
P_N(\cdot)=\int d(W^{\otimes N}\otimes \mu^{\otimes N})\expp{NF(L_N)}\Ind{\{L_N \in \cdot\}}
\end{eqnarray}

Theorem 1 uses the exponential tilting argument to derive a LDP for $\overline{L}_n$. This requires a slight amount of care, as the tilting functional $F$ is unbounded in our setting. However, the fact that $f,g$ are Lipschitz implies:
\[|F(L_N)|\leq K\,\left(1+\int\,|x_T-x_0|\,L_N(dx_{[0,T]}d\omega)\right)\]
for some constant $K>0$. Thus the exponential integrability conditions in Varadhan's Lemma (cf. \cite[Theorem 4.3.1]{dembo2009large}) apply and allow us to conclude the proof. 

For Theorem 2, the main body of the proof follows in the same way from It\^{o}'s Formula. The only change is in the argument for uniqueness in Appendix A. More specifically, what we need to do (in their notation) is show that the density of $Q_*$ at time $t$ conditionally on $\omega$ satisfies a bound:
\[q_t^{\omega}(z)\leq B_T\,t^{-\alpha}\]
with $0\leq \alpha<1/2$ and $B$ independent of $\omega$ (but may depend on $T$).

To obtain this, the \cite{Pra1996} uses the boundedness of $f$ and $g$ when they claim that the drift $\beta_t^{\omega,\Pi_tQ_*}$ is the bounded derivative of a bounded function. In our case the drift is a bounded derivative of a Lipschitz function. Therefore, for any event $E\subset C([0,T],\R)\times \R$,
\[Q_*(E) = \int_{A}\,Z_T\,W_\lambda\otimes \mu(dx_{[0,T]}d\omega)\]
where $|\log Z_T|\leq K\,(1+|x(T)-x(0)|)$ and $W_\lambda$ is the law of Brownian motion started from measure $\lambda$. Now if $E$ takes the form:
\[E:= \{(x_{[0,T]},\omega)\,:\, x(T)\in A,\omega\in B\},\]then:
\[Q_*(E) \leq \left(\int_{\R^2} e^{K\,(1+|x|)}\,\Ind{A}(x+y)\rho_{t}(x)\phi(y)\,dx\,dy\right)\times \mu(B),\]
where $\rho_{t}$ is the density of a $N(0,t)$ random variable and $\phi$ is the density of the initial measure $\lambda$. Using the notation of their paper, we obtain:
\[q_t^{\omega}(z)\leq \int_\R\,e^{K\,(1+|z-y|)}\,\phi(y)\,\rho_t(z-y)\,dy.\]
We may apply H\"{o}lder's inequality as in their proof to obtain:
\[q_t^{\omega}(z)\leq \|\phi\|_{L^p}\,\left(\int_\R\,e^{K\,q(1+|z-y|)}\,\rho_t(z-y)^q\,dy\right)^{\frac{1}{q}}\leq B\,t^{(1/2 -q/2)}.\]




%
%

\bibliographystyle{plain} 

\bibliography{IntDiff-not-too-Sparse}

\end{document}